\newtheorem{theorem}{Theorem}[section]
\newtheorem{lemma}[theorem]{Lemma}
\newtheorem{corollary}[theorem]{Corollary}
\newtheorem{proposition}[theorem]{Proposition}
\newtheorem{varexample}[theorem]{Example}
\theoremstyle{definition}
\newtheorem{definition}[theorem]{Definition}
 \newtheorem*{ack}{Acknowledgements}
\newtheorem{remark}[theorem]{Remark}
\newcommand{\Spec}{\mathrm{Spec}\,}
\newcommand{\PP}{\mathbb{P} }
\newcommand{\Ga}{\mathbb{G}_a}
\newcommand{\mult}{\operatorname{mult}}
\newcommand{\Eff}{\overline{\operatorname{Eff}}}
\newcommand{\HH}{\operatorname{H}}
\newcommand{\M}{\overline{M}}
\newcommand{\Pic}{\operatorname{Pic}}
\newcommand{\Hom}{\operatorname{Hom}}
\newcommand{\Cox}{\operatorname{Cox}}
\newcommand{\Bl}{\operatorname{Bl}}
\newcommand{\Supp}{\operatorname{Supp}}
\newcommand{\Char}{\operatorname{char}}
\newenvironment{example}{\begin{varexample}
\begin{normalfont}}{\end{normalfont}
\end{varexample}}
\begin{document}
\title{A simplicial approach to effective divisors in $\M_{0,n}$}
\author{Brent Doran$^1$}
\email{brent.doran@math.ethz.ch}
\address{$^1$Departement Mathematik, ETH \\ Z\"urich, Switzerland}

\author{Noah  Giansiracusa$^2$}
\email{noahgian@uga.edu}
\address{$^2$Department of Mathematics, University of Georgia \\
  Athens, GA 30602, USA}

\author{David Jensen$^3$}
\email{dave.jensen@uky.edu}
\address{$^3$Department of Mathematics, University of Kentucky \\
  Lexington, KY 40605, USA}

\maketitle

\vspace{-0.1in}

\begin{abstract}
We study the Cox ring and monoid of effective divisor classes of $\M_{0,n} \cong \Bl\PP^{n-3}$, over a ring $R$.  We provide a bijection between elements of the Cox ring, not divisible by any exceptional divisor section, and pure-dimensional singular simplicial complexes on $\{1,\ldots,n-1\}$ with weights in $R\setminus\{0\}$ satisfying a zero-tension condition.  This leads to a combinatorial criterion, satisfied by many triangulations of closed manifolds, for a divisor class to be among the minimal generators for the effective monoid.  For classes obtained as the strict transform of quadrics, we present a complete classification of minimal generators, generalizing to all $n$ the well-known Keel-Vermeire classes for $n=6$.  We use this classification to construct new divisors with interesting properties for all  $n\ge 7$.
\end{abstract}


\section{Introduction}

Determining the pseudo-effective cone, the closure  $\Eff(X)\subseteq N^1(X)$ of the cone of effective divisor classes on a projective variety $X$, is a familiar and challenging problem in geometry.   A related structure, defined when the Picard group $\Pic(X)$ is finitely generated, is the Cox ring, or total coordinate ring \cite{Cox95,HK00}:
$$ \Cox ( X ) := \bigoplus_{L \in \Pic (X)} \HH^0 (X,L) . $$
This ring is graded by $\Pic(X)$, and the support for this grading is the monoid of effective divisor classes, sometimes denoted $M(X)$, which forms a collection of distinguished lattice points in the effective cone.  Given a set of generators for $\Cox(X)$, their classes in $\Pic(X)$ yield a set of generators for $M(X)$, and the limits of the rays spanned by these classes in $N^1(X)$ generate $\Eff(X)$.  However, $\Cox(X)$ need not be finitely generated, even if $\Eff(X)$ or $M(X)$ is, so computing a presentation of the Cox ring is often not a plausible way to study effective divisor classes.  Moreover, the list of generators one obtains (whether finite or not) from generators of the Cox ring can be massively redundant.  When $\Pic(X)$ is finitely generated and torsion-free, there is a unique minimal generating set for $M(X)$ which is typically quite geometrically meaningful.  For instance, if $X$ is a del Pezzo surface then the minimal generators for $M(X)$ are the classes of the $(-1)$-curves and this provides a bridge to many beautiful areas of representation theory and combinatorics.

The Cox ring, effective cone, and effective monoid are usually studied over the complex numbers, since that is where the strongest geometric tools are applicable \cite{Laz04}.   However, they may be  defined more generally if $X$ is defined over other rings and can encode important arithmetic information in such cases.

In this paper we develop a systematic framework for studying the Cox ring and monoid of effective divisor classes for the moduli space $\M_{0,n}$ of $n$-pointed stable rational curves, valid over any ring $R$.  We crucially rely on the Kapranov-Hassett presentation of $\M_{0,n}$ over $\Spec \mathbb{Z}$ as an iterated blow-up of $\mathbb{P}^{n-3}$ along linear subspaces \cite{Kap93a,Has03}, which in turn leads to a description, over the integers, of $\Cox(\M_{0,n})$ as a subring of invariants of a $\Pic(\M_{0,n})$-graded polynomial ring \cite{DG13}.  As there are no torsion issues, and Picard groups are canonically identified upon base change, there is no choice of isomorphism class of universal torus torsor and the Cox ring is well-defined and may be studied over $R$ via its presentation over $\mathbb{Z}$.   Questions about $\Cox(\M_{0,n})$ date back to the introduction of the notion of a Cox ring \cite{HK00}, and studying the collection of effective divisors in $\M_{0,n}$ has been central to a burgeoning industry for a myriad of moduli spaces \cite{CFM13}.  It has very recently been shown that $\Cox(\M_{0,n})$ is not finitely generated for large $n$ (originally $n \ge 134$  \cite{CT15}, then subsequently extended to $n \ge 13$ \cite{GK14}) when $R$ is an algebraically closed field of characteristic zero; finite generation of $\Eff(\M_{0,n})$ and $M(\M_{0,n})$ remains open for all $n \ge 7$.  Rather than focusing on finite generation, however, we turn to questions such as the following:
\begin{enumerate}
\item How do the Cox ring and effective divisor classes depend on the base ring $R$?
\item Is the first lattice point of every ray of the effective cone an effective divisor class?
\item Does a minimal generating set for the Cox ring yield one for the effective monoid or cone?
\item Are there classes spanning rays outside the previously known subcone of the effective cone?
\item What other geometric structures are related to the minimal generators of these objects?
\end{enumerate}
We find that the answer to \emph{all} of these questions changes drastically when passing from $n \le 6$ to $n \ge 7$.  The following gives a flavor of our findings, all of which are provided by explicit examples.  Question (5) is addressed in \S\ref{sec:Precise}, where detailed versions of these and other results are presented.
\begin{theorem}\label{thm:IntroMain}
Fix $n \ge 7$ and consider $\M_{0,n}$ relative to an arbitrary field $k$.
\begin{enumerate}
\item[(1+2)] There are rays in $N^1(\M_{0,n})$ that are effective in all characteristics, yet their first lattice point is effective if and only if $\Char(k) = 2$;
\item[(3)] The monoid $M(\M_{0,n})$ is not generated by the divisor classes spanning extremal rays of $\Eff(\M_{0,n})$, hence the ring $\Cox(\M_{0,n})$ is not generated by the sections of such divisor classes, when $\Char(k) \ne 2$.
\item[(4)] There are effective rays outside the cone of boundary and hypertree divisors, thereby disproving a conjecture of Castravet and Tevelev \cite{CT13}.
\end{enumerate}
\end{theorem}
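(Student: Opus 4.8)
The plan is to deduce all three parts from the complete classification of minimal generators arising as strict transforms of quadrics, by producing, for each $n \ge 7$, one explicit family of quadrics in $\PP^{n-3}$ and reading off the relevant geometry from the weighted-simplicial-complex dictionary. The unifying device is that a quadric $Q$ defined over $\Z$ specializes differently modulo $2$ than over a field of other characteristic: the symmetric bilinear form attached to $Q$ degenerates (becomes alternating) in characteristic $2$, and this is exactly mirrored in the zero-tension condition, where the weights now satisfy $1 = -1$. Thus a single integral quadric generalizing the Keel--Vermeire configuration simultaneously produces a characteristic-$2$-sensitive divisor class $D$ on $\M_{0,n}$. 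First I would fix such a $Q$, compute the class $D$ of its strict transform in terms of the Kapranov--Hassett presentation, and record the pure-dimensional weighted complex on $\{1,\ldots,n-1\}$ that the bijection assigns to the candidate section.

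For (1+2), I would show that the primitive generator $v$ of the ray $\mathbb{R}_{\ge 0}[D]$ is effective precisely in characteristic $2$. Effectivity there is witnessed by exhibiting the explicit zero-tension weighting, which exists because the tension equations collapse under $1+1=0$. The harder direction is the nonexistence of any section of $v$ when $\Char(k) \ne 2$: the simplicial criterion converts this into a purely combinatorial impossibility statement, namely that no pure-dimensional weighted complex of the prescribed multidegree can satisfy zero tension over a field in which $2 \ne 0$. Independently, $2v$ is effective in every characteristic, since the doubled multidegree is realized by a reducible (or squared) weighted complex whose weights can be chosen in the prime field. Together these give a ray effective in all characteristics whose first lattice point is effective if and only if $\Char(k) = 2$.

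For (3), I would use the combinatorial minimal-generator criterion to certify that a companion quadric class $D'$ is a necessary generator of the monoid $M(\M_{0,n})$ when $\Char(k) \ne 2$, while a separate computation of $[D']$ against the explicitly known boundary and hypertree classes shows that $2[D']$ decomposes as a sum of two effective extremal classes, so that $[D']$ lies in the relative interior of a two-dimensional face of $\Eff(\M_{0,n})$ and hence off every extremal ray. Because the only obstruction to decomposing $D'$ inside the monoid is tied to the same characteristic-$2$ degeneration of the quadric --- in characteristic $2$ the class splits and ceases to be a minimal generator --- the hypothesis $\Char(k) \ne 2$ is exactly what is needed. Since any generating set of $\Cox(\M_{0,n})$ descends to one of $M(\M_{0,n})$, this immediately yields the stated non-generation of the Cox ring by sections of extremal classes.

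For (4), I would verify that $D$ lies outside the subcone of $\Eff(\M_{0,n})$ generated by boundary and hypertree divisors by exhibiting a single linear functional on $\N^1(\M_{0,n})$ that is nonnegative on every boundary and hypertree generator yet strictly separates $[D]$; since the hypertree classes are explicitly enumerated, this is a finite linear-programming feasibility check, and the real work is arranging the quadric family so that the same separating functional persists for all $n \ge 7$. For $n \ge 9$ I would further show $[D]$ spans a genuinely new extremal ray of $\Eff(\M_{0,n})$ by producing a supporting hyperplane meeting the effective cone only along $\mathbb{R}_{\ge 0}[D]$. The principal obstacle throughout is the nonexistence half of (1+2): ruling out a section of the primitive class outside characteristic $2$ is a vanishing statement, and proving the corresponding combinatorial impossibility \emph{uniformly in $n$}, rather than for a single small case, is the crux on which the characteristic-dependence of all three parts ultimately rests.
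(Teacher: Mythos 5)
Your choice of witnesses matches the paper's (the disjoint union of two triangles for (1+2) and (4), the two triangles joined by an edge for (3)), and your treatment of (3) is essentially the paper's argument: $D_{\Delta'}$ is irreducible in $M(\M_{0,n})$ when $\Char(k)\ne 2$ by the quadric classification, while $2D_{\Delta'}$ is a sum of distinct boundary classes (Example \ref{ex:2Dboundary}), so it spans no extremal ray. But there are genuine gaps at exactly the two hardest steps. First, in (1+2) your justification that $2v$ is effective in every characteristic---``realized by a reducible (or squared) weighted complex whose weights can be chosen in the prime field''---fails when $\Char(k)\ne 2$: there is no section of $v=D_{tri}$ to square, and $2D_{tri}$ is not visibly the class of any product complex. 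The paper's argument (\S\ref{subsec:twotri}) is a genuine trick: it takes the two sections known to lie in $\HH^0(\M_{0,7},2D_{\Delta'})$, namely the square of the unique section of $D_{\Delta'}$ and the boundary product of Example \ref{ex:2Dboundary}, and shows that a suitable linear combination becomes divisible by a product of exceptional sections $x_I$, the quotient being the desired section of $2D_{tri}$. Without this (or an equivalent computation) you only have a ray effective in characteristic $2$, and part (1+2) collapses.

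Second, in (4) you never actually construct the separating functional, and your proposed route to extremality for $n\ge 9$---``a supporting hyperplane meeting the effective cone only along $\mathbb{R}_{\ge 0}[D]$''---is circular: exhibiting such a hyperplane presupposes knowledge of the very cone being bounded. The paper's functionals are intersections with explicit curves: for $n=7$, a pencil of lines through the node of an irreducible nodal cubic yields a curve $F_7$ with $F_7\cdot D_{tri}=-1$ yet nonnegative intersection with every boundary and hypertree class (Corollary \ref{cor:NEqualsSeven}); for $n=9$, extremality of $D_{oct}$ follows from the covering-curve criterion of \cite[Lemma 1.4.2]{Rulla} applied to a curve $F_9$ covering $D_{oct}$ with $F_9\cdot D_{oct}<0$. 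This curve construction is the missing key idea. Relatedly, you misidentify the crux: non-effectivity of $v$ when $\Char(k)\ne 2$ poses no uniformity-in-$n$ difficulty, since it is Proposition \ref{prop:EffIff} applied to a fixed six-vertex complex, and all $n\ge 7$ then follow by pulling back along forgetful maps rather than by making any separating functional ``persist for all $n$.''
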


\begin{remark}\label{rem:Opie}
While preparing this preprint we were informed that Opie had simultaneously constructed counterexamples to the hypertree conjecture \cite{Opi13}.  We show that, for $n \ge 9$, there are extremal rays outside the cone generated by boundary, hypertree, and Opie's divisors.
\end{remark}

By contrast, for $n \le 6$ the Cox ring has the same set of minimal generators for any field, the classes of these generators form a minimal generating set for the effective monoid, and these classes are the first lattice points of each extremal ray of the effective cone (see \S\ref{sec:Previous}).  For $n \le 5$ these generators are all boundary divisors.  For $n=6$ there are non-boundary generators, and they are all given, in terms of Kapranov's blow-up construction $\M_{0,n} \cong \Bl\PP^{n-3}$, as the strict transform of certain quadric hypersurfaces.  We provide a complete classification of all minimal generators of $M(\M_{0,n})$ corresponding to quadric hypersurfaces, for any $n$; this provides a rich enough supply of divisor classes to construct all the examples described in Theorem \ref{thm:IntroMain}.

The basis of our framework is a result in \cite{DG13} that $\Cox(\M_{0,n})$ is isomorphic, over $\Spec\mathbb{Z}$, to the ring of $\Ga$-invariants for an action on a polynomial ring. This polynomial ring is the Cox ring of a toric variety, call it $X_n$, with the same Picard group as $\M_{0,n}$.  The $\Ga$-action on $\Cox(X_n)$ is induced by one on the universal torus torsor over $X_n$, an open subset of affine space, rather than an action on $X_n$ itself.  Since the grading by this common Picard group on the two Cox rings is compatible, a divisor class on $\M_{0,n}$ is effective if and only if the corresponding class on $X_n$ admits a $\Ga$-invariant section.  We encode homogeneous polynomials in $\Cox(X_n)$ by a singular simplicial complex whose simplices are in bijection with the terms of the polynomial and are decorated with the corresponding coefficients.  Invariance under the $\Ga$-action then translates into a zero-tension condition that for each face of a simplex in the complex, the sum of $R$-weights of simplices containing that face is zero.  As we shall see, this perspective of ``balanced complexes'' yields a powerful new perspective on the Cox ring $\Cox(\M_{0,n})$ and effective monoid $M(\M_{0,n})$, and, like the case of $(-1)$-curves on a del Pezzo surface, it ties the geometry of effective divisors in $\M_{0,n}$ to many beautiful classical constructions, such as triangulations of the sphere and other closed manifolds.  We list several open questions and areas of further study arising from this perspective in \S\ref{sec:Future}.

\subsection{Previously known results and constructions}\label{sec:Previous}

Throughout this paper we fix an isomorphism $\M_{0,n} \cong \Bl\PP^{n-3}$, i.e., a choice of $\psi$-class.  This isomorphism was originally provided by Kapranov over $\Spec\mathbb{C}$ \cite{Kap93a}, but work of Hassett extends it to be defined over $\Spec\mathbb{Z}$ \cite[\S6]{Has03}.  This yields a $\mathbb{Z}$-grading on $\Cox(\M_{0,n})$ and $\Pic(\M_{0,n})$; indeed, by the \emph{degree} of a divisor or its class we shall mean the image under the map $\Pic(\M_{0,n}) \to \Pic(\PP^{n-3}) \cong \mathbb{Z}$ induced by the above blow-up presentation.  Thus, the degree of an effective divisor is zero if it is a union of exceptional divisors and it is simply the degree of its image as a hypersurface in $\PP^{n-3}$ otherwise.

The first conjecture concerning effective divisors in $\M_{0,n}$ was given by Fulton, although formally stated by Keel and McKernan in \cite{KM96}.  It asserts that $\Eff(\M_{0,n})$ is spanned by the classes of boundary divisors, extending the del Pezzo situation of $n=5$ and inspired by the geometry of toric varieties where boundary strata do indeed span all effective classes.  Put another way, this conjecture says that the effective cone is generated by classes of degree at most one.  However, Keel and Vermeire simultaneously found degree two counterexamples for $n=6$ \cite{Ver02,GKM02}. Hassett and Tschinkel, with the help of computer calculation, verified that for $n=6$ the Keel-Vermeire classes, together with the boundary classes, do in fact span the entire effective cone \cite{HT02}.

The question remained of whether the unique (up to scalar) sections of these divisor classes generate $\Cox(\M_{0,6})$.  Here Castravet performed a comprehensive analysis of this particular case and found a positive answer to this question \cite{Cas09}.  This implies that the effective monoid $M(\M_{0,6})$ is also minimally generated by the boundary and Keel-Vermeire classes, so essentially everything is known here and attention now turns to $n > 6$.  Many experts believed the Keel-Vermeire classes in $n=6$ must be part of a more general construction valid for all $n$, yet this vast generalization proved rather elusive.  A breakthrough was made in 2009 when Castravet and Tevelev found a construction (summarized in \S\ref{sec:CTbackground}) that yields all Keel-Vermeire classes in $n=6$ and a huge number of new extremal rays for $n \ge 7$.  They conjectured that the boundary classes together with these so-called hypertree divisor classes span $\Eff(\M_{0,n})$ for all $n$ \cite{CT13}.  Additionally, they raised the question (and called it a ``pipe dream'') of whether, as with $n=6$, it is always the case that the Cox ring is generated by sections of divisor classes spanning extremal rays of the effective cone.  One could weaken this dream slightly and ask for the effective monoid $M(\M_{0,n})$ to be generated by classes lying along extremal rays.  As we shall see below, these dreams fail for all $n \ge 7$, as does the hypertree conjecture (see also Remark \ref{rem:Opie} concerning Opie's exciting paper \cite{Opi13}).

Along with the remarkable recent result of Castravet and Tevelev that $\Cox(\M_{0,n})$ is not finitely generated for $n \ge 134$ \cite{CT15}, or in other words, that Hu and Keel's dream for $\M_{0,n}$ is also false \cite{HK00}, we are left once again in a quandary: one hopes that there is some order to the seemingly chaotic and unwieldy collection of effective divisors and divisor classes on $\M_{0,n}$, yet the challenge is to find a structure that best describes it and reveals how truly complex it is.

\subsection{Precise statement of new results and constructions}\label{sec:Precise}

For us, a (possibly singular) $d$-simplex is a multiset of cardinality $d+1$, and a pure-dimensional simplicial complex of dimension $d$, or ``$d$-complex'', with vertex set $[n-1] := \{1,\ldots,n-1\}$ is any set of $d$-simplices with entries in $[n-1]$.  Thus, a 1-complex is a graph with loops allowed but multiple edges disallowed.   For a multiset $S$, the number of times $i\in S$ occurs is its multiplicity, denoted $\mult_i(S)$.  We call a complex ``non-singular'' if every element of every simplex in it has multiplicity at most one.

Fix a ring $R$ and consider $\M_{0,n}$ relative to $\Spec R$.  A weighted complex is an assignment of a nonzero element of $R$ to each simplex in a complex, and a weighted complex is ``balanced'' if, for each face, the sum of the weights of simplices containing it is zero (see Definition \ref{def:balanced} for details).  It is ``balanceable'' if there exists a collection of elements of $R\setminus\{0\}$ satisfying this property.  For any $d$-complex $\Delta$ on $[n-1]$ we define a degree $d+1$ divisor class $D_\Delta$ as follows: \[D_\Delta := (d+1)H - \sum_I \left(d+1-\max_{\sigma\in\Delta}\left\{\sum_{i\in I}\mult_i(\sigma)\right\}\right)E_I \in \Pic(\M_{0,n}).\]
Here $I\subseteq [n-1]$ satisfies $1 \le |I| \le n-4$ and $E_I$ denotes the corresponding exceptional divisor.  The following result is where we crucially use the identification in \cite{DG13} (and summarized in \S\ref{sec:CoxInvar}) of $\Cox(\M_{0,n})$ with an explicit ring of $\Ga$-invariants:

\begin{theorem}\label{thm:IntroCorresp}
For any $d \ge 0$ and $n \ge 5$, there is a natural bijection between degree $d+1$ multi-homogeneous elements of $\Cox(\M_{0,n})$, not divisible by any exceptional divisor section, and balanced $d$-complexes on $[n-1]$.  All balancings on a complex $\Delta$ correspond to elements with class $D_\Delta$.
\end{theorem}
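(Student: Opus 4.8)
The plan is to descend everything to the toric model and reduce the statement to a bookkeeping computation with monomials. By \cite{DG13} (recalled in \S\ref{sec:CoxInvar}) we may identify $\Cox(\M_{0,n})$ with the ring of invariants $\Cox(X_n)^{\Ga}$, where $\Cox(X_n) = R[x_1,\dots,x_{n-1},\{e_I\}]$ is a polynomial ring graded by $\Pic(\M_{0,n})$ with $[e_I] = E_I$ and $[x_i] = H - \sum_{I\not\ni i} E_I$, and the $\Ga$-action is generated by a locally nilpotent derivation $\partial$; the essential feature I will use is that $\partial$ acts on generators by $\partial e_I = 0$ and $\partial x_i = \prod_{I\ni i} e_I$ (up to the normalization fixed in \S\ref{sec:CoxInvar}), so that $\partial$ is multi-homogeneous of degree $\sum_I E_I - H$. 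A multi-homogeneous element of $\Cox(\M_{0,n})$ is then precisely a $\partial$-closed multi-homogeneous element of $\Cox(X_n)$, so I would prove the theorem in two stages: first a bijection between multi-homogeneous elements of $\Cox(X_n)$ not divisible by any $e_I$ and weighted complexes, and then the identification of the $\partial$-closed ones with the balanced complexes.

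For the first stage I would expand an element in the monomial basis. A monomial $x^{a}\prod_I e_I^{c_I}$ has $H$-degree $\sum_i a_i$, so in degree $d+1$ its factor $x^{a}$ records a unique $d$-simplex $\sigma$ with $\mult_i(\sigma) = a_i$, and its coefficient becomes the weight of $\sigma$. A direct computation gives the class of this monomial as $(d+1)H - \sum_I\big((d+1) - \sum_{i\in I} a_i - c_I\big)E_I$, so demanding a common class $\alpha = (d+1)H - \sum_I k_I E_I$ across all terms forces $c_I(\sigma) = (d+1) - \sum_{i\in I} a_i(\sigma) - k_I$; nonnegativity of the exponents together with the hypothesis that the element is divisible by no $e_I$ says that $\min_\sigma c_I(\sigma) = 0$, i.e. $k_I = (d+1) - M_I$ with $M_I := \max_{\sigma\in\Delta}\sum_{i\in I}\mult_i(\sigma)$ over the set $\Delta$ of occurring simplices. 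Reading this off shows the common class is exactly $D_\Delta$, and conversely every weighted $d$-complex $\Delta$ yields such an element $f_\Delta = \sum_{\sigma\in\Delta} c_\sigma\, x^{a(\sigma)}\prod_I e_I^{M_I - \sum_{i\in I} a_i(\sigma)}$ of class $D_\Delta$ divisible by no $e_I$. This establishes the bijection before invariance, and already shows the class depends only on the support $\Delta$, proving the final sentence of the theorem.

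The heart of the proof, and the step I expect to require the most care, is the second stage: showing $\partial f_\Delta = 0$ is equivalent to the balancing condition of Definition \ref{def:balanced}. Applying $\partial$ termwise, the contribution of a simplex $\sigma$ and a vertex $i$ has $x$-part $x^{a(\tau)}$, where $\tau$ is the $(d-1)$-face obtained by deleting one copy of $i$ from $\sigma$, and carries a multiplicity factor $\mult_i(\sigma)$. The crucial point is the interaction with the exceptional variables: the factor $\prod_{I\ni i} e_I$ produced by $\partial$ raises the exponent of $e_I$ by $[i\in I]$, and one checks that $\big(M_I - \sum_{j\in I} a_j(\sigma)\big) + [i\in I] = M_I - \sum_{j\in I} a_j(\tau)$ depends only on $\tau$ and not on which vertex was removed. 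Hence all terms of $\partial f_\Delta$ producing a given face $\tau$ share the single monomial $m_\tau := x^{a(\tau)}\prod_I e_I^{M_I-\sum_{i\in I}a_i(\tau)}$, and grouping by $\tau$ yields $\partial f_\Delta = \sum_\tau \lambda_\tau\, m_\tau$, where $\lambda_\tau$ is the multiplicity-weighted sum $\sum_{\sigma} \mult_i(\sigma)\, c_\sigma$ over those $\sigma\in\Delta$ having $\tau$ as a codimension-one face. Since distinct faces give distinct, hence linearly independent, monomials $m_\tau$, we conclude $\partial f_\Delta = 0$ if and only if every $\lambda_\tau$ vanishes, which is exactly the zero-tension requirement that for each face the multiplicity-weighted sum of weights of simplices containing it is zero. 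Combining the two stages identifies the multi-homogeneous $\Ga$-invariant elements not divisible by any exceptional section with the balanced $d$-complexes on $[n-1]$, all of class $D_\Delta$, which is the assertion of the theorem.
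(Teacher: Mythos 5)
Your Stage 1 is correct: the monomial bookkeeping identifying multi-homogeneous elements not divisible by any exceptional section with $R$-weighted $d$-complexes, and the computation that forces the common class to be $D_\Delta$, is essentially the paper's own argument (it is the ``clearing denominators'' computation behind Lemma \ref{lem:DelClass} and Remark \ref{rem:LaurentClass}). The gap is in Stage 2, and it is fatal in the generality in which the theorem is stated. The theorem, and all of \S\ref{sec:Complexes}, is over an arbitrary base ring $R$, and over such a ring $\Ga$-invariance is \emph{not} equivalent to being annihilated by the generating derivation $\partial$. Invariance means $f$ is fixed by the translation by every $s$, i.e.\ that \emph{all} coefficients of $s^j$, $j\ge 1$, in the transformed element vanish; the operator extracting the $s^j$-coefficient is the divided power $\partial^j/j!$, so only in characteristic zero (more generally over $\mathbb{Q}$-algebras) does $\partial f=0$ force the rest. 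Your computation correctly shows that $\partial f_\Delta = 0$ is equivalent to the zero-tension condition at codimension-one faces only, whereas Definition \ref{def:balanced} demands the condition at faces of \emph{every} cardinality: the coefficient of $s^j$ is precisely the obstruction coming from faces of cardinality $d+1-j$. In characteristic zero these higher conditions follow from the first-order one and your proof goes through; in characteristic $p$ they do not, and your claimed equivalence ``$\partial f_\Delta=0$ iff $\Delta$ is balanced'' is false.

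Concretely, take $R=\mathbb{F}_2$ and let $\Delta$ be a single triangle $\{\{1,2\},\{2,3\},\{1,3\}\}$ with all weights equal to $1$. Every vertex lies on exactly two edges, so all of your $\lambda_\tau = 2 = 0$ and $\partial f_\Delta = 0$; your criterion declares $f_\Delta$ invariant and $\Delta$ balanced. But the total weight is $3 = 1 \ne 0$, so $\Delta$ fails the degree-$0$ balancing condition, and indeed the group action sends $f_\Delta \mapsto f_\Delta + s^2$, so $f_\Delta$ is not $\Ga$-invariant and does not correspond to an element of $\Cox(\M_{0,n})$. Accepting it would also wreck the downstream characteristic-$2$ results (Theorem \ref{thm:CharTwo}, Proposition \ref{Prop:MinimalGraphs}), where the whole distinction between one odd cycle and two disjoint odd cycles is exactly the degree-$0$ condition your criterion drops; this dependence on $R$ is quantified by Proposition \ref{prop:chardep}, whose hypothesis on $\binom{d+1}{\ell}$ fails here. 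The repair is to argue as the paper does: apply the full, finite translation action to the Laurent polynomial $f_{\underline{\Delta}} = \sum w_i\, y_{\sigma_i}/z_{\sigma_i}$, expand in powers of $s$, and match the coefficient of $s^j$ with the balancing condition in degree $d+1-j$ for every $j\ge 1$; equivalently, in your derivation language, demand the vanishing of all divided powers of $\partial$ applied to $f_\Delta$, not just of $\partial f_\Delta$ itself.
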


We call a complex ``minimal'' if it is balanceable yet no proper subcomplex is balanceable.  For a $d_1$-complex $\Delta_1$ and a $d_2$-complex $\Delta_2$, their product is the $(d_1+d_2+1)$-complex $\Delta_1\cdot \Delta_2$ with simplices $\sigma_1\cup \sigma_2$ where $\sigma_i\in\Delta_i$.  Recall that the monoid $M(\M_{0,n})$ of effective divisor classes has a unique collection of minimal generators (cf., Remark \ref{rem:MonIrr}).  We can now state our main results on complexes and divisor classes:

\begin{theorem}
Fix integers $d,n$ as above.
\begin{enumerate}
\item Let $D \in \Pic(\M_{0,n})$ be a class such that $D-E_I$ is not effective for any $I$.  Then $D$ is effective if and only if there is a balanceable complex $\Delta$ with $D_\Delta = D$.
\end{enumerate}
Assume now that $d \le n-5$ and $\Delta$ is a non-singular $d$-complex.
\begin{enumerate}
\item[(2)] Suppose that every proper subcomplex $\Delta'\subsetneq \Delta$ is not balanceable.  Then $D_\Delta$ is effective if and only if $\Delta$ is balanceable.
\item[(3)] If $\Delta$ is minimal and $R$ is a field then $h^0(\M_{0,n},D_\Delta) = 1$.  If, moreover, there is no decomposition $\Delta = \Delta_1 \cdot \Delta_2$ with $\Delta_i$ minimal non-singular complexes supported on disjoint subsets of $[n-1]$, then $D_\Delta$ is a minimal generator of $M(\M_{0,n})$ and every generating set for $\Cox(\M_{0,n})$ includes the unique (up to scalar) section of $D_\Delta$.
\end{enumerate}
\end{theorem}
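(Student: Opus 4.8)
The plan is to reduce everything to the correspondence of Theorem~\ref{thm:IntroCorresp}, exploiting the observation that the hypothesis ``$D-E_I$ is not effective'' is equivalent to ``no section of $D$ is divisible by the exceptional section $x_I$.'' Indeed, if a nonzero $s\in \HH^0(\M_{0,n},D)$ were divisible by $x_I$, then $s/x_I$ would witness the effectivity of $D-E_I$. For part~(1), the hypothesis therefore guarantees that every nonzero section of $D$ is indivisible by all exceptional sections, so Theorem~\ref{thm:IntroCorresp} attaches to it a balanced (hence balanceable) $d$-complex $\Delta$ with $D_\Delta=D$, where $d+1=\deg D$; this gives the forward implication, and the reverse is immediate since any balancing of a $\Delta$ with $D_\Delta=D$ is, by the same theorem, a section of $D$. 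The ``if'' direction of part~(2) is the same statement.

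For the ``only if'' direction of part~(2), I would take a nonzero section of $D_\Delta$ and factor out a maximal monomial $\prod_I x_I^{a_I}$ with $a_I\ge 0$ to obtain $s'$ indivisible by every $x_I$. By Theorem~\ref{thm:IntroCorresp}, $s'$ determines a balanced complex $\Delta'$ (its support) with $D_{\Delta'}=D_\Delta-\sum_I a_I E_I$; writing $m_I(\cdot):=\max_\sigma\sum_{i\in I}\mult_i(\sigma)$, this says $m_I(\Delta')\le m_I(\Delta)$ for all admissible $I$. Testing singletons $I=\{v\}$ and using that $\Delta$ is non-singular forces $m_{\{v\}}(\Delta')\le 1$ for every $v$, so $\Delta'$ is non-singular. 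Then for each $\sigma'\in\Delta'$ the index $I=\sigma'$ is admissible because $|\sigma'|=d+1\le n-4$, and $m_{\sigma'}(\Delta')=d+1\le m_{\sigma'}(\Delta)\le d+1$ forces some $\sigma\in\Delta$ with $|\sigma\cap\sigma'|=d+1$, i.e. $\sigma=\sigma'$. Hence $\Delta'\subseteq\Delta$ is a nonempty balanceable subcomplex, and the hypothesis that no proper subcomplex is balanceable forces $\Delta'=\Delta$, so $\Delta$ is balanceable.

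For part~(3), when $\Delta$ is minimal the previous paragraph applies and shows in addition that $a_I=0$, so \emph{every} section of $D_\Delta$ is a balancing of $\Delta$ with no exceptional factors. Thus $\HH^0(\M_{0,n},D_\Delta)$ is linearly identified with the space $V\subseteq R^{\Delta}$ of solutions to the zero-tension equations of Definition~\ref{def:balanced}. Minimality forces $\dim_R V=1$: given two independent elements one can subtract a scalar multiple to produce a nonzero $u\in V$ vanishing on some simplex, whose support is then a balanceable proper subcomplex — contradicting minimality. This yields $h^0(\M_{0,n},D_\Delta)=1$.

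Finally, for irreducibility, suppose $D_\Delta=A+B$ with $A,B$ nonzero effective; choosing sections $a,b$ and using $h^0(D_\Delta)=1$ gives $ab=\lambda s$ for the generator $s$. A homogeneous degree-$0$ section of a nonzero class is a single monomial in the $x_I$, since the classes $[E_I]$ are independent in $\Pic(\M_{0,n})$; hence a degree-$0$ factor would make $s$ divisible by some $x_I$, which is impossible. So $A,B$ have positive degree and correspond to balanced complexes $\Delta_a,\Delta_b$, and the additivity $m_{\{v\}}(\Delta)=m_{\{v\}}(\Delta_a)+m_{\{v\}}(\Delta_b)\le 1$ shows both factors are non-singular and supported on disjoint vertex sets. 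Disjointness eliminates cancellation in the product, so $\Delta=\Delta_a\cdot\Delta_b$; and since a balanceable proper subcomplex of either factor would, multiplied by the other, give one of $\Delta$, both factors are minimal — exactly the excluded decomposition. This contradiction shows $D_\Delta$ is irreducible in the effective monoid, hence a minimal generator (cf.\ Remark~\ref{rem:MonIrr}); moreover the absence of any factorization $s=ab$ into positive-class sections means $s\notin\mathfrak{m}^2$ (where $\mathfrak{m}$ is spanned by sections of nonzero classes), so any homogeneous generating set of $\Cox(\M_{0,n})$ must contain a section spanning the one-dimensional piece $\HH^0(\M_{0,n},D_\Delta)$. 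I expect the main obstacle to be this last combinatorial translation: controlling cancellation when multiplying sections and proving that the induced factors of $\Delta$ are both minimal and supported on disjoint vertex sets.
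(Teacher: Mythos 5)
Your proposal is correct, and it follows the paper's route: part (1) is exactly the paper's argument (Corollary \ref{cor:AllClass}), part (2) repackages the paper's coefficient comparisons (your singleton test is Lemma \ref{lem:nonsing}, your test with $I=\sigma'$ is the argument of Proposition \ref{prop:UniCom} inside Proposition \ref{prop:EffIff}), and part (3) follows the paper's chain through Propositions \ref{prop:MinCyc}, \ref{prop:MinSys} and Theorem \ref{thm:CoxGen}.

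The one place you genuinely diverge is the disjoint-support claim in the irreducibility argument. The paper proves $\Supp(\Delta_1)\cap\Supp(\Delta_2)=\varnothing$ at the level of sections: it takes a common vertex $\ell$, looks at the top powers $y_\ell^{k_i}$ in the two Laurent polynomials, and argues that the product has a surviving term divisible by $y_\ell^{k_1+k_2}$, contradicting non-singularity of $\Delta$; this requires a small ``no total cancellation'' argument over a domain. You instead work at the level of Picard classes: from $D_\Delta=D_{\Delta_a}+D_{\Delta_b}$ and the formula of Lemma \ref{lem:DelClass} you read off the additivity $m_{\{v\}}(\Delta)=m_{\{v\}}(\Delta_a)+m_{\{v\}}(\Delta_b)$ of the $E_{\{v\}}$-coefficients, and non-singularity of $\Delta$ immediately forces both factors to be non-singular with disjoint supports. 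This is cleaner: it avoids the cancellation subtlety entirely and uses only the freeness of $\Pic(\M_{0,n})$ on $H$ and the $E_I$. The trade-off is that your argument only controls the complexes through their classes, so you still need (as you note) the disjointness itself to rule out cancellation when identifying $\Delta_a\cdot\Delta_b$ with $\Delta$ and when transporting minimality between $\Delta$ and its factors --- but those steps go through exactly as in the paper.
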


These results, as well as Theorem \ref{thm:IntroCorresp}, are all stated and proven in \S\ref{sec:Complexes}.  Turning now to 1-complexes (i.e., graphs) and their corresponding degree two divisor classes (i.e., strict transforms of quadric hypersurfaces) we have the following (see \S\ref{sec:Deg2}):

\begin{theorem}
Fix a base field $k$.  The minimal generators of $M(\M_{0,n})$ with degree two are precisely the classes $D_\Delta$ where $\Delta$ is either (cf., Figure \ref{fig:minimal}):
\begin{enumerate}
\item  an $m$-gon with $m \ge 6$ even,
\item  an $m_1$-gon meeting an $m_2$-gon at a single vertex, with both $m_i \ge 3$ odd, or
\item[(3a)]  (when $\Char(k)=2$) the disjoint union of an $m_1$-gon and an $m_2$-gon, with $m_i \ge 3$ odd;
\item[(3b)]  (when $\Char(k) \ne 2$) an $m_1$-gon connected by a path of any length of degree two vertices to an $m_2$-gon, with $m_i \ge 3$ odd.
\end{enumerate}
\end{theorem}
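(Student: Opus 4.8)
The plan is to convert the statement entirely into a combinatorial classification of graphs and then carry out that classification by characteristic. First I would reduce ``degree-two minimal generator'' to ``minimal balanceable graph that is not a simplicial product.'' By Theorem \ref{thm:IntroCorresp} every degree-two class not divisible by an exceptional section is $D_\Delta$ for a graph (1-complex) $\Delta$, and a minimal generator is automatically of this form, since $D-E_I$ effective would exhibit a decomposition. Writing $c_I^\Delta := 2-\max_{\sigma\in\Delta}\{\sum_{i\in I}\mult_i(\sigma)\}$ for the exceptional coefficients in $D_\Delta$, the key reduction lemma I would prove is that a minimal generator $D_\Delta$ forces $\Delta$ to be a \emph{minimal} balanceable graph: if $\Delta'\subsetneq\Delta$ were balanceable, then a missing edge $\{i,j\}$ gives $c_{\{i,j\}}^{\Delta'}>0=c_{\{i,j\}}^{\Delta}$, so $D_\Delta=D_{\Delta'}+\sum_I(c_I^{\Delta'}-c_I^{\Delta})E_I$ with nonnegative, not-all-zero exceptional coefficients, whence $D_\Delta-E_{\{i,j\}}$ is effective, a contradiction. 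Conversely, part~(3) of the preceding theorem already gives that a minimal non-product $\Delta$ yields a minimal generator; since a simplicial product here can only be a product of two $0$-complexes, it is exactly a balanceable complete bipartite graph, i.e.\ a reducible quadric $\ell_1\ell_2$ with decomposable class, and among the surviving graphs only $C_4=K_{2,2}$ is such a product, which is why the even case must start at $m\ge 6$.

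With this in hand I would make the balancing equations explicit for a graph: at each vertex $v$ the incident weights sum to zero, and at the empty face the \emph{total} of all weights is zero. The decisive observation is that summing the vertex equations yields $2\sum_e w_e=0$, so the total condition is automatic when $\Char(k)\ne 2$ but is an independent global constraint when $\Char(k)=2$. This single fact is the source of the entire characteristic dichotomy: it forces minimal balanceable graphs to be connected away from characteristic two, while in characteristic two it both links distinct components and rules out single odd cycles, thereby permitting the disconnected configuration (3a).

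I would then classify minimal balanceable graphs. In all characteristics such a graph has no leaves (a degree-one vertex forces its unique edge to have weight zero), so every vertex has degree at least two, and minimality forces the balancings to be nowhere-zero with a one-dimensional space of solutions. For $\Char(k)\ne 2$ the balancing space is $\ker M$ for the unsigned incidence matrix $M$, whose corank is governed by the bipartite/non-bipartite dichotomy; the one-dimensional case splits into the bipartite situation ($m=n$, forcing a single even cycle once leaves are excluded) and the non-bipartite situation ($m=n+1$, a bicyclic leafless graph). Propagating the vertex equations around a cycle shows an odd cycle alternates to a sign clash that only a second odd cycle can absorb; a direct check kills theta graphs (they force a zero weight or degenerate to an even cycle) and shows an even cycle attached to anything forces the attaching edge to vanish, leaving exactly the two-odd-cycles-joined-by-a-path family, i.e.\ cases (2) and (3b). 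For $\Char(k)=2$ the only nonzero weight is $1$, so balanceable means every vertex has even degree and the edge count is even; the minimal-support such edge sets are a single even cycle, two odd cycles sharing a vertex, or two disjoint odd cycles, giving (1), (2), (3a), with any chain of three or more odd cycles excluded because it contains a balanceable two-cycle subconfiguration.

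The step I expect to be the main obstacle is the completeness of this classification rather than its sufficiency: ruling out \emph{all} exotic leafless configurations (theta graphs, longer chains of odd cycles, mixed-parity attachments, and even-cycle attachments) and correctly tracking how the global total-weight condition in characteristic two links components and eliminates single odd cycles. Once the list of minimal balanceable graphs is pinned down, matching it against the product/non-product distinction to obtain the precise bounds $m\ge 6$ and $m_i\ge 3$ is routine; but getting the characteristic-two bookkeeping exactly right---so that (3a) appears only in characteristic two and (3b) only away from it, while (2) survives in both---is where the argument must be most careful.
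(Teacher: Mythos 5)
Your overall route is the same as the paper's: use the balanced-complex dictionary (Corollary \ref{cor:AllClass}) plus a coefficient-comparison argument to show a degree-two minimal generator must be $D_\Delta$ for a \emph{minimal} balanceable graph $\Delta$, invoke Theorem \ref{thm:CoxGen} for the converse, and then classify minimal balanceable graphs by cycle parity and characteristic. The genuine gap is that you never address \emph{singular} $1$-complexes, i.e.\ graphs with loops $\{i,i\}$, which the paper's notion of $1$-complex allows and which your reduction cannot avoid: your lemma only shows a minimal generator is $D_\Delta$ for some minimal balanceable $1$-complex, and minimal balanceable $1$-complexes with loops genuinely exist, since a loop behaves as an odd $1$-cycle. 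For example, two disjoint loops form a minimal balanceable complex in characteristic two, and a loop joined by a path of edges to a triangle is minimal balanceable in characteristic $\ne 2$. These are absent from the theorem's list (which demands $m_i\ge 3$), so you must prove their classes are reducible, and your proposed mechanism cannot do it: such graphs are not products of disjointly supported minimal $0$-complexes, and Theorem \ref{thm:CoxGen} is stated only for \emph{non-singular} complexes, so it neither certifies them as generators nor rules them out. The paper needs a separate argument here, Proposition \ref{prop:loopfree}: a loop at $i$ forces $D_\Delta$ to involve only exceptional classes $E_I$ with $i\notin I$, so $D_\Delta$ is identified with an effective class on the toric blow-up of $\PP^{n-3}$ at coordinate points only, whose effective cone is generated by exceptional divisors and hyperplane strict transforms; hence $D_\Delta$ is a sum of boundary classes and is reducible. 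Consequently your closing claim that both bounds ``$m\ge 6$ and $m_i\ge 3$'' follow routinely from the product distinction is wrong: the product case only excludes the square (giving $m\ge 6$), while excluding the $m_i=1$ (loop) cases requires this additional toric/boundary argument.

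A secondary flaw: in characteristic two you assert ``the only nonzero weight is $1$,'' so that balanceable means all vertex degrees even and $|E|$ even. This is false over any characteristic-two field larger than $\mathbb{F}_2$; for instance three disjoint triangles weighted $1,\omega,\omega^2$ over $\mathbb{F}_4$ are balanced although $|E|=9$ is odd. Your final list survives, but the argument must be replaced by the characteristic-free one implicit in the paper's Proposition \ref{Prop:MinimalGraphs}: any two edge-disjoint odd cycles, whether disjoint or sharing a vertex, are balanced in characteristic two by constant weights $c$ and $-c$, so a minimal graph can contain at most two cycles and no further edges, and the case analysis then closes. With the loop case handled and the characteristic-two argument repaired, the remaining linear-algebra classification you propose (corank of the unsigned incidence matrix, elimination of theta graphs and of even-cycle attachments) is a workable, and somewhat more systematic, substitute for the paper's direct parity argument.
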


In particular, for any $n \ge 6$ we have non-boundary classes, not pulled-back along a forgetful map, that are minimal generators over any field.  The divisor classes appearing in this classification, and closely related ones, exhibit some interesting previously unobserved phenomena---specifically, that of Theorem \ref{thm:IntroMain}.  If $\Delta$ is a 1-complex given by two triangles connected by a single edge (Figure \ref{fig:tribritri}) then $2D_\Delta$ is a sum of boundary classes (cf., Example \ref{ex:2Dboundary}) so the class $D_\Delta$ does not span an extremal ray of $\Eff(\M_{0,7})$, even though it is a minimal generator of $M(\M_{0,7})$ and its unique (up to scalar) section is a necessary generator for $\Cox(\M_{0,7})$, when $\Char(k) \ne 2$.  If $\Delta$ is the 1-complex of edges in a disjoint union of two triangles, then $D_\Delta$ is effective if and only $\Char(k)=2$, yet it turns out (see \S\ref{subsec:twotri}) that $2D_\Delta$ is effective in all characteristics.  Moreover, the ray spanned by this class lies outside the cone of boundary and hypertree divisors (Corollary \ref{cor:NEqualsSeven}), so it yields an $n=7$ counterexample to the Castravet-Tevelev conjecture and its pull-backs along the forgetful morphisms thus yield counterexamples for all $n$.

If $\Delta$ is the 1-complex of edges of an octagon, then $D_\Delta$ spans an extremal ray of $\Eff(\M_{0,9})$ which is also a counterexample to the hypertree conjecture (\S\ref{subsec:oct}).  We prove extremality by constructing a curve that covers this divisor and intersects it negatively.  To see that this ray is distinct from all the extremal rays spanned by hypertree divisors, we prove in \S\ref{sec:Deg} that the only hypertree divisors of degree two for some choice of $\psi$-class are the unique $n=6$ hypertree (the Keel-Vermeire class, which we identify as $D_{\Delta'}$ where $\Delta'$ is the edges of a pair of triangles meeting at a single vertex) and the unique $n=7$ hypertree (which we identify as $D_{\Delta'}$ where $\Delta'$ is the edges of a hexagon).  Note that this extremal ray spanned by $D_\Delta$ for $\Delta$ the octagon is also distinct from the extremal rays recently found by Opie, since those all have degree greater than two \cite{Opi13}.  In \S\ref{sec:Octagon} we illustrate the correspondence of Theorem \ref{thm:IntroCorresp} with this particular choice of $\Delta$.

In \S\ref{sec:Deg3} we give some examples of minimal complexes yielding divisor classes of degree three that are minimal generators for the effective monoid.  One of these, the faces of a cycle of tetrahedra linked together in a precise way (see Figure \ref{fig:TetCyc}), turns out to correspond to the unique $n=7$ hypertree divisor (for a different choice of $\psi$-class than the one used above) when there are three tetrahedra in the cycle.  The other examples we present are well-known triangulations of the sphere and torus.  It would be interesting to understand more systematically the link between such divisors and triangulations of closed manifolds.  We conclude in \S\ref{sec:Future} with a list of open questions and areas for further research arising from the investigations in this paper.



\subsection{A quadric example: the octagon}\label{sec:Octagon}

We present here an example of the correspondence in Theorem \ref{thm:IntroCorresp} in the case of the 1-complex $\Delta$ given by the edges of an octagon.  The associated divisor class $D_\Delta \in \Pic(\M_{0,9})$ is our degree two counterexample to the hypertree conjecture for $\M_{0,9} = \Bl\PP^6$.  From our $\Ga$-invariant perspective, one should view the coordinates on $\PP^6$ as differences $y_i-y_j$ of the homogeneous coordinates $y_1,\ldots,y_8$ on $\PP^7$ (see \S\ref{sec:CoxInvar}).  If we set \[\Delta = \{\{1,2\},\{2,3\},\ldots,\{7,8\},\{8,1\}\}\] then this can be balanced by assigning these edges alternating elements of $\{\pm 1\} \subseteq R$ (in fact, every balanced $R$-weighting is a constant multiple of this one).  Associated to this weighted complex is the quadric hypersurface in $\PP^7$ defined by \[y_1y_2 - y_2y_3 + \ldots + y_7y_8 - y_8y_1 = 0,\]  which is a cone over a quadric hypersurface in $\PP^6$.  If we label the 7 coordinate points as $q_1,\ldots,q_7$ and the ``general'' point $q_8 := [1 : \cdots : 1]$, then in the Kapranov blow-up the strict transform of our quadric has class \[2H - \sum_{\substack{I \subset \{1,\ldots,8\} \\ ~1 \le |I| \le 5 \\ J \nsubseteq I~\forall J \in \Delta}} E_I \in \Pic(\M_{0,9}).\]  Geometrically, a codimension $\ge 2$ linear subspace spanned by a collection of the $q_i$ is contained in this quadric unless two of the points are connected by an edge in the octagon.

\begin{ack}
We thank Aravind Asok, Ana-Maria Castravet, Charles Doran, Emily Frey, Jeff Giansiracusa, Angela Gibney, Sean Keel, Frances Kirwan, Rahul Pandharipande, Bernd Sturmfels, and Jenia Tevelev for useful discussions, and Andrey Novoseltsev for numerous computer calculations.  We thank Abramovich, Castravet, and Tevelev in particular for comments on an earlier version of this paper, and the organizers of the COMB IV conference, August 2013, for inviting us to present our hypertree counterexamples.  The first author was supported in part by the SNF, the second author by an NSF postdoctoral fellowship, and the third author by a Simons postdoctoral fellowship.  We also thank the ESI in Vienna for hosting all three of us for a portion of these investigations and FIM in Z\"urich for hosting the second author.
\end{ack}


\section{Background: the Cox ring as a ring of invariants}\label{sec:CoxInvar}

Here we briefly summarize the construction and result of \cite{DG13} that we rely on throughout.  The reader is referred to that paper for all details.  Let $X_n$ be the toric variety obtained as the iterated blow-up of $\PP^{n-2}$ along all codimension $\ge 3$ coordinate linear subspaces, in order of increasing dimension.  This can be viewed as the result of performing Kapranov's iterated blow-up construction of $\M_{0,n}$ but in $\PP^{n-2}$ instead of $\PP^{n-3}$, so that there is a projectivity sending all of the blown-up points to coordinate points, not just all but one of them.  The Picard groups of $\M_{0,n}$ and $X_n$ can be identified once a $\psi$-class on $\M_{0,n}$ is chosen to induce an isomorphism $\M_{0,n} \cong \Bl\PP^{n-3}$.  Indeed, \[\Pic(\M_{0,n}) =  \mathbb{Z}H\bigoplus_{1 \le |I| \le n-4}\mathbb{Z} E_I = \Pic(X_n),\] where $H$ is the pull-back of a general hyperplane and the $E_I$, $I \subset [n-1]$, are the exceptional divisors.

The Cox ring of $X_n$, as with any toric variety, is a polynomial ring \cite{Cox95}; write it as \[\Cox(X_n) = k[y_1,\ldots,y_{n-1},(x_I)_{1 \le |I| \le n-4}],\] where the $y_i$ define the strict transforms of the coordinate hyperplanes in $\PP^{n-2}$ and $x_I$ is a section of $E_I$.  The $\Pic(X_n)$-grading is given by \begin{equation}\label{eq:PicDeg}[x_I] = E_I\text{ and } [y_i] = H - \sum_{I \not\ni i}E_I.\end{equation} The non-reductive group $\Ga$ admits an action (non-linearizable for $n \ge 6$) \[x_I \mapsto x_I \text{ and } y_i \mapsto y_i + s\prod_{I \ni i}x_I,~s\in \Ga\] satisfying $\Cox(X_n)^{\Ga} = \Cox(\M_{0,n})$ \cite[Corollary 1.3(1)]{DG13}.  The orbits for the induced $\Ga$-action on affine space yield, upon quotienting by the Neron-Severi torus $\Hom(\Pic(X_n),\mathbb{G}_m)$, the fibers of a rational map $X_n \dashrightarrow \M_{0,n}$.  The restriction of this map to each exceptional divisor and to the complement of the exceptional divisors is a linear projection.  For instance, when $n=5$ it is the map $\Bl_{4\text{pts}}\PP^3 \dashrightarrow \Bl_{4\text{pts}} \PP^2$ given by projecting linearly from a point $q$ in the base $\PP^3$ and from the point in each exceptional $\PP^2$ that is the intersection with the strict transform of the line from $q$.

Inverting all the $x_I$ roughly corresponds to removing the exceptional divisors, in which case the geometry is simply a linear projection $\PP^{n-2} \dashrightarrow \PP^{n-3}$.  On this localization of $\Cox(X_n)$ we have the uniform translation action \[\frac{y_i}{z_i} \mapsto \frac{y_i}{z_i} + s, \text{ where } z_i := \prod_{I \ni i}x_I,\] so the homogeneous coordinates on $\PP^{n-3}$ are the differences $\frac{y_i}{z_i} - \frac{y_j}{z_j}$ (these generate the ring of invariants for this action).  Any hypersurface in $\PP^{n-3}$ is given by a polynomial in these differences, and its strict transform in $\Bl\PP^{n-3}$ is given by clearing the denominators.  More formally, we have:
\begin{theorem}[{DG13}, Corollary 1.3]\label{Thm:DGcor}
For all $n \ge 5$,
\[\Cox(\M_{0,n}) = k[(x_I^{\pm 1})_{1 \le |I| \le n-4},(\frac{y_i}{z_i} - \frac{y_j}{z_j})_{i,j \le n-1}] \cap \Cox(X_n).\]
\end{theorem}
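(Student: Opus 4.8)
The plan is to bootstrap from the already-recorded identification $\Cox(X_n)^{\Ga} = \Cox(\M_{0,n})$ (Corollary 1.3(1)) and to compute the invariant ring after localizing at the exceptional sections, where the $\Ga$-action becomes a transparent uniform translation. Write $A := \Cox(X_n) = k[y_i,x_I]$ and let $A'$ be the localization obtained by inverting every $x_I$. Since each $z_i = \prod_{I\ni i}x_I$ becomes a unit in $A'$, I would replace the generators $y_i$ by $u_i := y_i/z_i$, so that $A' = k[(x_I^{\pm 1}),u_1,\ldots,u_{n-1}]$. Under this change of variables the action $y_i \mapsto y_i + s z_i$ becomes the diagonal translation $u_i \mapsto u_i + s$ (with $x_I \mapsto x_I$), exactly as recorded just before the statement.

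\emph{Step 1 (invariants of the localized action).} I would show directly that $(A')^{\Ga} = k[(x_I^{\pm 1}),(u_i - u_j)]$. One inclusion is clear, since the $x_I$ and the differences $u_i - u_j$ are manifestly fixed. For the reverse, note the $\Ga$-equivariant factorization $A' = k[(x_I^{\pm 1}),(u_i - u_{n-1})][u_{n-1}]$, where $\Ga$ acts trivially on the first factor and by $u_{n-1}\mapsto u_{n-1}+s$ on the polynomial variable $u_{n-1}$. The translation-invariants of $k[u_{n-1}]$ are just the constants: if $p(t+s)=p(t)$ as an identity in $k[t,s]$, then comparing coefficients of the highest power of $s$ forces $\deg p = 0$. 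Hence $(A')^{\Ga}$ is generated by the $x_I^{\pm 1}$ together with the $u_i - u_{n-1}$, and therefore by all the differences $u_i - u_j = \tfrac{y_i}{z_i} - \tfrac{y_j}{z_j}$.

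\emph{Step 2 (descent to $A$).} I would then combine Step 1 with two formal facts. First, since every $x_I$ is $\Ga$-invariant and $A$ is a domain, localization commutes with taking invariants, so $(A')^{\Ga} = A^{\Ga}[(x_I^{-1})] = \Cox(\M_{0,n})[(x_I^{-1})]$. Second, for the $\Ga$-equivariant inclusion $A \hookrightarrow A'$ one has $A^{\Ga} = (A')^{\Ga}\cap A$: the inclusion $\subseteq$ is immediate, while conversely an element of $A$ that is fixed in $A'$ is already fixed in $A$ because $A \hookrightarrow A'$ is injective and equivariant. Intersecting the identity of Step 1 with $A = \Cox(X_n)$ then yields $\Cox(\M_{0,n}) = A^{\Ga} = k[(x_I^{\pm 1}),(\tfrac{y_i}{z_i}-\tfrac{y_j}{z_j})]\cap \Cox(X_n)$, as claimed.

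The only genuinely computational step is Step 1, and the point requiring care is its validity in arbitrary characteristic: the identification of the invariants with the ring of differences must not secretly invoke a rational slice or an exponential map, both of which misbehave when $\Char(k) = p > 0$. The slice argument above sidesteps this, since the translation-invariants of a one-variable polynomial ring are the constants independently of characteristic; everything else is formal localization and intersection bookkeeping.
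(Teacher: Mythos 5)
Your proposal is correct and takes essentially the same route the paper sketches in \S\ref{sec:CoxInvar} (and defers to \cite{DG13} for details): invert the invariant sections $x_I$, observe the action becomes the uniform translation $u_i \mapsto u_i + s$ on $u_i = \frac{y_i}{z_i}$, identify the invariants of the localized ring with $k[(x_I^{\pm 1}),(u_i-u_j)]$, and intersect back with $\Cox(X_n)$ using $\Cox(X_n)^{\Ga}=\Cox(\M_{0,n})$. Your Step 1 slice argument is precisely the characteristic-free justification of the paper's parenthetical claim that the differences generate the ring of invariants for this translation action, and your Step 2 bookkeeping is the formal descent the paper leaves implicit.
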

By Equation (\ref{eq:PicDeg}) we have $[\frac{y_i}{z_i} - \frac{y_j}{z_j}] = H - \sum_{1 \le |I| \le n-4} E_I$.  The boundary divisors are given by the $x_I$ and the binomials obtained by clearing denominators of the differences $\frac{y_i}{z_i} - \frac{y_j}{z_j}$.  Elements of the Cox ring with class that is not a sum of boundary classes are given by clearing denominators of a polynomial in the differences $\frac{y_i}{z_i} - \frac{y_j}{z_j}$ where some cancellation occurs when expanding it out.  For instance, the Keel-Vermeire divisors are obtained from \begin{equation}\label{eq:KV}(\frac{y_1}{z_1} - \frac{y_2}{z_2})(\frac{y_3}{z_3} - \frac{y_4}{z_4}) - (\frac{y_1}{z_1} - \frac{y_3}{z_3})(\frac{y_2}{z_2} - \frac{y_5}{z_5})\end{equation} and its permutations.


\section{Weighted simplicial complexes}\label{sec:Complexes}

Given a multiset $S$, we denote its cardinality (counting elements according to their multiplicity) by $|S|$.  The \emph{support}, denoted $\Supp(S)$, is the set of elements appearing in $S$, irrespective of multiplicity.  Thus $|\Supp(S)| \le |S|$, with equality if and only if $S$ has no repeated elements.

\begin{definition}
For a positive integer $d$ and a set $A$, by a \emph{$d$-simplex on $A$} we shall mean a multiset of cardinality $d+1$ with entries in $A$.  It is \emph{singular} if $|\Supp(\sigma)| < |\sigma|$, and \emph{non-singular} otherwise.  A \emph{simplicial complex of pure-dimension $d$ on $A$}, or ``$d$-complex'' for short, is any set of $d$-simplices on $A$.  We call a complex \emph{singular} if any of the simplices comprising it is singular, otherwise the complex is \emph{non-singular}.\end{definition}

For instance, a 1-complex is a graph in which loops are allowed (and remembered as edges of the form $\{v,v\}$), but multi-edges are disallowed.

\begin{definition}\label{def:balanced}
For a ring $R$, we say that a $d$-complex $\Delta = \{\sigma_1,\ldots,\sigma_r\}$ on $A$ is \emph{$R$-weighted} if each $d$-simplex $\sigma_i$ is assigned a non-zero element $w_i \in R$.  The weighted complex $(\Delta,\{w_i\})$ is \emph{balanced in degree j} if for each multiset $S$ with $|S|=j$ and $\Supp(S)\subseteq A$, we have \[\sum_{\sigma_i \supseteq S}w_i = 0.\]  A weighted $d$-complex is \emph{balanced} if it is balanced for each $j\in \{0,\ldots,d-1\}$, and an arbitrary $d$-complex is \emph{balanceable over $R$} if there exists nonzero elements of $R$ making it balanced as an $R$-weighted $d$-complex.
\end{definition}

\begin{remark} The sum in the above balancing condition incorporates the multiset structure of singular simplices, so for example a loop $\sigma = \{i,i\}$ in a 1-complex with weight $w$ contributes weight $2w$ for $S=\{i\}$ because we view $\{i\}$ as a subset of the multiset $\{i,i\}$ with multiplicity two.
\end{remark}

Being balanced in degree 0 simply means the total weight is zero: $\sum_{i=1}^r w_i = 0$.  This condition is implied by the other balancing conditions in many situations, as the following result illustrates.

\begin{proposition}\label{prop:chardep}
Fix integers $1\le \ell \le d$.  If the image of $\binom{d+1}{\ell} \in \mathbb{Z}$ under the map $\mathbb{Z} \rightarrow R$ is a non-zero-divisor, then an $R$-weighted $d$-complex is balanced in degree 0 if it is balanced in degree $\ell$.
\end{proposition}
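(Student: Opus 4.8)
The plan is to sum the degree-$\ell$ balancing equations over all admissible faces of cardinality $\ell$ and to recognize the resulting coefficient of $\sum_i w_i$ as $\binom{d+1}{\ell}$, at which point the non-zero-divisor hypothesis finishes the argument.

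First I would record the balancing condition in multiplicity notation. Writing each simplex $\sigma_i$ through its multiplicity function $m_i \colon A \to \mathbb{Z}_{\ge 0}$ with $\sum_a m_i(a) = d+1$, and a face $S$ through $s \colon A \to \mathbb{Z}_{\ge 0}$ with $\sum_a s(a) = \ell$, the degree-$\ell$ condition for $S$ reads $\sum_i w_i \prod_a \binom{m_i(a)}{s(a)} = 0$. This is exactly the content of the symbol $\sum_{\sigma_i \supseteq S} w_i$ once containments of faces with repeated vertices are counted with multiplicity, as spelled out in the Remark following Definition \ref{def:balanced}: the factor $\prod_a \binom{m_i(a)}{s(a)}$ is the number of ways to realize $S$ as a sub-multiset of $\sigma_i$.

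Next I would sum these equations over all multisets $S$ with $|S| = \ell$ and $\Supp(S) \subseteq A$, and interchange the order of summation to obtain
\[
\sum_i w_i \left( \sum_{|S| = \ell} \prod_a \binom{m_i(a)}{s(a)} \right) = 0.
\]
The inner sum counts, with embedding multiplicity, all $\ell$-element sub-multisets of the $(d+1)$-element multiset $\sigma_i$; equivalently it is the number of $\ell$-subsets of the $d+1$ slots of $\sigma_i$, which is $\binom{d+1}{\ell}$ independently of $i$ and of whether $\sigma_i$ is singular. (Algebraically it is the coefficient of $x^\ell$ in $\prod_a (1+x)^{m_i(a)} = (1+x)^{d+1}$, a multivariate Vandermonde identity.) Hence $\binom{d+1}{\ell} \cdot \sum_i w_i = 0$ in $R$, and since the image of $\binom{d+1}{\ell}$ is a non-zero-divisor we conclude $\sum_i w_i = 0$, which is precisely balancedness in degree $0$.

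The one point requiring care is the bookkeeping of embedding multiplicities for singular simplices: a careless count would replace $\binom{d+1}{\ell}$ by the number of \emph{distinct} sub-multisets of $\sigma_i$, a quantity that depends on $i$ and would break the factorization. Phrasing the inner sum as a choice of $\ell$ of the $d+1$ slots of $\sigma_i$ makes the uniform count transparent and is the crux of the proof; everything else is a formal interchange of summation together with the non-zero-divisor hypothesis.
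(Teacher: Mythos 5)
Your proof is correct and is essentially the paper's own argument: sum the degree-$\ell$ balancing conditions over all multisets $S$ with $|S|=\ell$, interchange the order of summation, and recognize that each simplex contributes $\binom{d+1}{\ell}$ (counting sub-multiset embeddings, i.e.\ choices of $\ell$ of the $d+1$ slots), so that $\binom{d+1}{\ell}\sum_i w_i = 0$ and the non-zero-divisor hypothesis finishes. Your explicit Vandermonde bookkeeping for singular simplices is just a spelled-out version of the paper's one-line count of ``faces spanned by $\ell$ vertices.''
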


\begin{proof}
Consider a weighted $d$-complex with notation $(\Delta,\{w_i\})$ as above.  Then
\[\binom{d+1}{\ell}\sum_{i = 1}^r w_i = \sum_{|S| = \ell}\sum_{\sigma_i \supseteq S} w_i \] since each $d$-simplex has $\binom{d+1}{\ell}$ faces spanned by $\ell$ vertices.  The claim now follows immediately.
\end{proof}

Recall from the introduction that we fix an isomorphism $\M_{0,n} \cong \Bl\PP^{n-3}$, for $n \ge 5$, and hence a notion of degree of divisors and their classes.  We also use the notation from \S\ref{sec:CoxInvar} and work over an arbitrary ring $R$---that is, all weightings of complexes are $R$-weightings, unless otherwise stated, and the variety $\M_{0,n}$ is considered relative to $\Spec R$; in particular, $\Cox(\M_{0,n})$ is an $R$-algebra.

\begin{theorem}\label{thm:CoxCom}
For $d \ge 1$, there is a bijection between degree $d$ homogeneous elements of $\Cox(\M_{0,n})$, not divisible by any exceptional divisor section $x_I$, and balanced $(d-1)$-complexes on $[n-1]$.
\end{theorem}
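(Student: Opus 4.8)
The plan is to produce an explicit dictionary between homogeneous polynomials in $\Cox(X_n)$ and weighted $(d-1)$-complexes, and then to show that the subring $\Cox(\M_{0,n}) = \Cox(X_n)^{\Ga}$ is cut out precisely by the balancing condition. First I would use Theorem \ref{Thm:DGcor} to write any degree $d$ element of $\Cox(\M_{0,n})$ not divisible by any $x_I$ as the numerator, after clearing denominators, of a degree $d$ homogeneous polynomial $f$ in the differences $t_i := \frac{y_i}{z_i}$ where $z_i := \prod_{I \ni i} x_I$. Since $f$ is invariant under the uniform translation $t_i \mapsto t_i + s$, it is a polynomial in the differences $t_i - t_j$; but rather than work with differences, I would expand $f$ as a genuine homogeneous polynomial of degree $d$ in the $t_i$ themselves. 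Each degree $d$ monomial in $t_1,\ldots,t_{n-1}$ is exactly the data of a multiset of cardinality $d$ on $[n-1]$, i.e.\ a $(d-1)$-simplex; attaching to each such simplex the coefficient (a nonzero element of $R$) of the corresponding monomial gives a weighted $(d-1)$-complex $\Delta$ on $[n-1]$. This assignment $f \mapsto (\Delta, \{w_i\})$ is visibly a bijection between degree $d$ homogeneous polynomials in the $t_i$ and $R$-weighted $(d-1)$-complexes, so the content is to match the two distinguished subclasses.

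**Translating $\Ga$-invariance into the balancing condition.**
The heart of the argument is to show that $f$ is $\Ga$-invariant if and only if its associated weighted complex is balanced in every degree $j \in \{0,\ldots,d-2\}$. I would compute the effect of the generating infinitesimal action directly: the $\Ga$-action sends $t_i \mapsto t_i + s$ uniformly, so writing $f = \sum_{|\sigma|=d} w_\sigma\, t^\sigma$ (where $t^\sigma := \prod_{i} t_i^{\mult_i(\sigma)}$), the condition $f(t_i+s) = f(t_i)$ for all $s$ is equivalent to the vanishing of every coefficient in the expansion of $f(t_i+s)$ as a polynomial in $s$. Extracting the coefficient of a fixed monomial $t^S s^{d-j}$ (where $S$ is a multiset of cardinality $j$) from $f(t_i+s) = \sum_\sigma w_\sigma \prod_i (t_i+s)^{\mult_i(\sigma)}$, a multinomial expansion shows this coefficient is a positive-integer multiple of $\sum_{\sigma \supseteq S} w_\sigma$, where $\sigma \supseteq S$ means $S$ is a subset of the multiset $\sigma$. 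Hence $\Ga$-invariance forces $\sum_{\sigma \supseteq S} w_\sigma = 0$ for every multiset $S$ with $0 \le |S| \le d-1$, which is exactly the statement that $(\Delta,\{w_i\})$ is balanced in degrees $0,\ldots,d-2$ in the sense of Definition \ref{def:balanced}. Conversely, if all these sums vanish then every nonconstant $s$-coefficient of $f(t_i+s)$ vanishes and $f$ is invariant. The one subtlety, which I expect to be the main technical obstacle, is the bookkeeping with \emph{singular} simplices: the multinomial coefficients arising from a repeated index must be reconciled with the ``contributes weight $2w$'' convention highlighted in the remark after Definition \ref{def:balanced}, and I would verify carefully that the combinatorial multiplicities on both sides agree so that the stated balancing condition (and not some weighted variant) is what emerges.

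**Restoring the Cox element and checking well-definedness.**
Finally I would confirm that this correspondence descends correctly to $\Cox(\M_{0,n})$ itself. Clearing denominators converts the invariant polynomial $f$ in the $t_i$ into an honest element $\tilde f \in \Cox(X_n)$: multiplying by $\prod_i z_i^{\mult_i}$ appropriately turns each $t^\sigma$ into a monomial in the $y_i$ and $x_I$, and one must check that the resulting element lies in $\Cox(X_n)$, is not divisible by any $x_I$ (this is where the hypothesis ``not divisible by any exceptional section'' is used, and it matches the requirement that $\Delta$ is a genuine $(d-1)$-complex using every simplex with a nonzero weight rather than one with a common $x_I$ factor), and has the claimed $\Pic$-degree $d$ computed via Equation (\ref{eq:PicDeg}). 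The intersection with $\Cox(X_n)$ in Theorem \ref{Thm:DGcor} guarantees that a polynomial in the $t_i$ yields a bona fide Cox element exactly when the denominators clear integrally, and I would observe that this integrality is automatic once $f$ is genuinely a polynomial in the differences $t_i - t_j$. Putting these pieces together establishes the asserted bijection; the assignment is manifestly inverse to the expansion map constructed in the first step, so both directions are accounted for.
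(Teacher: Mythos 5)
Your proposal is correct and takes essentially the same route as the paper's proof: both use Theorem \ref{Thm:DGcor} to identify the Cox elements in question with translation-invariant degree $d$ polynomials in the $\frac{y_i}{z_i}$, match monomials (multisets of cardinality $d$) with $(d-1)$-simplices and coefficients with weights, and then expand the $\Ga$-action in powers of $s$, so that vanishing of each $s^j$-coefficient is exactly the balancing condition in degree $d-j$. The multiset bookkeeping you flag is the right point to check---for singular simplices the coefficient of $\frac{y_S}{z_S}$ is the balancing sum computed with the paper's convention that a simplex containing $S$ counts with binomial multiplicity (it is not in general a common integer multiple of the naive sum over simplices)---and once that identification is made your argument coincides with the paper's.
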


\begin{proof}
Such an element of the Cox ring is a section of a divisor class corresponding to the strict transform in $\Bl\PP^{n-3} \cong \M_{0,n}$ of a degree $d$ hypersurface in $\PP^{n-3}$.  As discussed in \S\ref{sec:CoxInvar}, these hypersurfaces in $\PP^{n-3}$ can be viewed as degree $d$ polynomials in the rational functions $\frac{y_i}{z_i}$ that are invariant under the uniform translation action $\frac{y_i}{z_i} \mapsto \frac{y_i}{z_i} + s$, $s\in \Ga$, and their strict transforms are obtained by clearing denominators.  Therefore, it suffices to show that balanced complexes are in bijection with invariant homogeneous polynomials in these coordinates.

For each balanced $(d-1)$-complex $\underline{\Delta} = (\Delta,\{w_i\}_{i=1}^r)$, consider the Laurent polynomial
\[f_{\underline{\Delta}} := \sum_{i=1}^r w_i\frac{y_{\sigma_i}}{z_{\sigma_i}},\] where $y_{\sigma_i} = \prod_{j\in \sigma_i} y_j$ (resp. $z_{\sigma_i}$) is the degree $d$ monomial given by the obvious multi-index notation.

The $\Ga$-action sends $f_{\underline{\Delta}}$ to an expression that is a degree $d$ polynomial in the parameter $s\in\Ga$.  The coefficient of the $s^j$-term is a homogeneous polynomial in the $\frac{y_i}{z_i}$ of degree $d-j$.  Given a multiset $S$ of cardinality $d-j$, the sum \[\sum_{\sigma_i \supseteq S} w_i\] appearing in the degree $d-j$ balancing condition is precisely the coefficient of the term $\frac{y_S}{z_S}$ appearing in this homogeneous polynomial.  Therefore, the $s^j$-term of our polynomial vanishes if and only if $\underline{\Delta}$ is balanced in degree $d-j$, and hence $f_{\underline{\Delta}}$ is $\Ga$-invariant if and only if $\underline{\Delta}$ is balanced.
\end{proof}

\begin{remark}\label{rem:LaurentClass}
The Laurent polynomial $f_{\underline{\Delta}}$ appearing in the above proof has divisor class independent of the complex $\Delta$ and the weights $w_i$; indeed, by recalling the $\Pic(\M_{0,n})$-grading on $\Cox(X_n)$ discussed in \S\ref{sec:CoxInvar} one sees immediately that $[y_\sigma] - [z_\sigma] = dH - d\sum E_I$ for any $(d-1)$-simplex $\sigma$, hence $f_{\underline{\Delta}}$ has this class as well.
\end{remark}

\begin{definition}
The \emph{support} of a complex $\Delta$ on $A$ is the collection of indices appearing in the simplices of $\Delta$: \[\Supp(\Delta) := \bigcup_{\sigma\in \Delta}\Supp(\sigma) \subseteq A.\]
\end{definition}

Clearly a complex $\Delta$ on $A$ may be viewed as a complex on $\Supp(\Delta) \subseteq A$.

\begin{lemma}\label{lem:ComPull}
If $\underline{\Delta}$ is a balanced complex on $[n-1]$, corresponding to $f\in\Cox(\M_{0,n})$, then its restriction to $\Supp(\Delta)$ is balanced; if $g\in\Cox(\M_{0,\Supp(\Delta)\sqcup \{*\}})$ is the corresponding polynomial, then $f = \pi^*g$, where $\pi : \M_{0,[n-1]\sqcup\{*\}} \rightarrow \M_{0,\Supp(\Delta)\sqcup\{*\}}$ is the induced forgetful map.
\end{lemma}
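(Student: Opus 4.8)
My plan is to dispatch the balancing claim by a vacuousness observation, and then to establish the pull-back identity by tracking both $f$ and $\pi^* g$ through the invariant-coordinate description of \S\ref{sec:CoxInvar}.

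First I would prove that the restriction of $\underline{\Delta}$ to $\Supp(\Delta)$ is balanced. Since every simplex $\sigma_i\in\Delta$ satisfies $\Supp(\sigma_i)\subseteq\Supp(\Delta)$, any multiset $S$ with $\Supp(S)\not\subseteq\Supp(\Delta)$ is contained in no $\sigma_i$, so the defining sum $\sum_{\sigma_i\supseteq S}w_i$ is empty and the corresponding balancing condition holds automatically. Thus the balancing conditions for $\underline{\Delta}$ over $[n-1]$ reduce to exactly those indexed by multisets supported on $\Supp(\Delta)$, which are precisely the balancing conditions for the restriction; hence the restriction is balanced (indeed, balancedness over $[n-1]$ and over $\Supp(\Delta)$ are equivalent). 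Denote the restricted weighted complex by $\underline{\Delta}'$ and its associated element by $g$.

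For the identity $f=\pi^* g$, the key geometric input is that, with both moduli spaces presented via the common $\psi$-point $*$, the forgetful map $\pi$ restricts on the complement of the exceptional divisors to the linear projection $\PP^{n-3}\dashrightarrow\PP^{|\Supp(\Delta)|-2}$ away from the coordinate subspace spanned by the forgotten indices and onto the one spanned by $\Supp(\Delta)$ (a count of dimensions shows the projection drops exactly the number of forgotten markings). In the invariant language of \S\ref{sec:CoxInvar} this says $\pi^*\!\left(\frac{y_i}{z_i}-\frac{y_j}{z_j}\right)=\frac{y_i}{z_i}-\frac{y_j}{z_j}$ for $i,j\in\Supp(\Delta)$, retaining the differences indexed by $\Supp(\Delta)$ and discarding the rest. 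Because $\Delta$ and $\underline{\Delta}'$ have identical simplices and identical weights, the invariant Laurent polynomials $f_{\underline{\Delta}}$ and $g_{\underline{\Delta}'}$ of the proof of Theorem~\ref{thm:CoxCom} are the same expression in these differences, whence $\pi^* g_{\underline{\Delta}'}=f_{\underline{\Delta}}$ after inverting all the $x_I$. Consequently $\pi^* g$ and $f$ agree in the localization $\Cox(\M_{0,n})[x_I^{-1}]$ up to a Laurent monomial $m$ in the exceptional sections, i.e. $\pi^* g=m\cdot f$.

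The hard part is to conclude that $m=1$, which is the exceptional-divisor bookkeeping hidden inside ``clearing denominators.'' I would do this by comparing $\Pic(\M_{0,n})$-classes: $f$ has class $D_\Delta$ (in the notation of the introduction), and I would check that $\pi^* g$ has class $\pi^* D_{\Delta'}=D_\Delta$ as well, using the pull-back formula for $H$ and for the exceptional classes $E_I$ under the forgetful map, so that $[m]=[\pi^* g]-[f]=0$; since the $E_I=[x_I]$ are linearly independent in $\Pic(\M_{0,n})$, this forces the monomial $m$ to be trivial. Equivalently, one can argue directly that $\pi^* g$ is not divisible by any $x_I$: $g$ is not divisible by any $x'_{I'}$ by construction, and the dominance of $\pi$ keeps the preimage of $\operatorname{div}(g)$ off the exceptional divisors $E_I$, so $m$ has no negative exponents either and must equal $1$. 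Verifying this precise forgetful pull-back of the $E_I$ (equivalently, that $\pi^* g$ acquires no exceptional factor) is the step requiring genuine care; everything else is formal once the coordinate action of $\pi$ in the previous step is in hand.
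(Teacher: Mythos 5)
Your first two steps are correct and are essentially the paper's own proof: the balancing claim is the vacuousness observation, and the identification of $\pi$ with Kapranov's linear projection (\cite{Kap93b}, Proposition 2.7), under which the invariant Laurent polynomial of $\underline{\Delta}$ is the reinterpretation of that of its restriction, is exactly the content of the paper's argument, which stops there with ``the result follows.'' The gap is in your third step, and it is unfixable as you set it up: with the standard sheaf-theoretic meaning of $\pi^*$, the monomial $m$ is \emph{not} trivial in general, so neither of your two proposed arguments can work. Concretely, take $n=6$ and $\Delta$ the square $\{\{1,2\},\{2,3\},\{3,4\},\{1,4\}\}$ with weights $(1,-1,1,-1)$, so $\Supp(\Delta)=\{1,2,3,4\}$ and $\pi:\M_{0,6}\to\M_{0,5}$ forgets the label $5$. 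Here $f$ is obtained by clearing denominators in $(\frac{y_1}{z_1}-\frac{y_3}{z_3})(\frac{y_2}{z_2}-\frac{y_4}{z_4})$, and Lemma \ref{lem:DelClass} gives $[f]=D_\Delta=2H-\sum_{i\le 4}E_i-2E_5-E_{13}-E_{24}-\sum_{i\le 4}E_{i5}$, while $[g]=2H'-\sum_{i\le 4}E_i'$. The pull-back formulas for the forgetful map are $\pi^*H'=H-E_5$ and $\pi^*E_i'=E_i+E_{i5}$, so $\pi^*[g]=[f]+E_{13}+E_{24}\ne D_\Delta$: the class identity $\pi^*D_{\Delta'}=D_\Delta$ underlying your route (a) is false. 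Route (b) fails for the same underlying reason: dominance of $\pi$ only prevents the whole space from mapping into $\operatorname{div}(g)$; it does not keep exceptional divisors out of $\pi^{-1}(\operatorname{div}(g))$. Indeed $E_{24}=\delta_{\{2,4,6\}}$ maps \emph{onto} the component $\delta_{\{1,3\}}$ of $\operatorname{div}(g)$ (the strict transform of the line through the blown-up points $q_2',q_4'$ of $\PP^2$), and $E_{13}$ maps onto $\delta_{\{2,4\}}$, so $x_{13}x_{24}$ divides $\pi^*g$; in fact $\pi^*g=x_{13}x_{24}\,f$ up to scalar.

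What this example shows is that the step you flagged as ``the hard part'' is not merely hard but false under the literal reading, and that your intermediate conclusion $\pi^*g=m\cdot f$ is the sharp true statement: $m$ is the product of the $x_I$ over those exceptional divisors $E_I$ mapping into $\operatorname{div}(g)$, and it is nontrivial exactly when $\operatorname{div}(g)$ contains degree-one boundary (binomial) components, as for the square; it is trivial when $g$ has no such factors (e.g.\ when $\Delta$ restricted to its support is minimal and non-boundary). The identity $f=\pi^*g$ holds only with the operational reading that the paper's proof implicitly uses: pull back the Laurent polynomial on the locus where all $x_I$ are invertible, where $\pi$ really is the linear projection, and then \emph{minimally} clear denominators --- a strict-transform-type pull-back that discards exceptional factors by fiat. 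Under that reading your steps one and two already constitute a complete proof, matching the paper's; under the literal reading, the correct statement is the weaker one you actually proved, $f=\pi^*g$ up to a monomial in the exceptional sections, and you should not attempt to improve it to $m=1$.
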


\begin{proof}
The balancing statement is trivial, so we focus on the pull-back statement.  By \cite[Proposition 2.7]{Kap93b}, this forgetful map corresponds to linear projection \[\pi^\circ : \mathbb{P}^{n-3} \dashrightarrow \mathbb{P}^{|\Supp(\Delta)|-2}\] from the subspace spanned by the subset of the $n-1$ points $e_1,\ldots, e_{n-2}, \sum e_i \in \mathbb{P}^{n-3}$ corresponding to the ``forgotten'' points.  By relabelling if necessary, we can assume without loss of generality that the projection is only from coordinate points $e_i$ and not the point $\sum e_i$.  Let us write \[z'_i = \prod_{i \in I \subset \Supp\Delta}x_I \in \Cox(\M_{0,\Supp(\Delta)\sqcup\{*\}}),\] so that our coordinates on $\mathbb{P}^{|\Supp(\Delta)|-2}$ are the differences $\frac{y_i}{z'_i} - \frac{y_j}{z'_j}$ for $i,j\in \Supp(\Delta)$.  Since our coordinates on $\PP^{n-3}$ are the differences $\frac{y_i}{z_i} - \frac{y_j}{z_j}$ for $i,j\in [n-1]$, then the fact that we are only considering a coordinate linear projection immediately implies that \[(\pi^\circ)^* f_{\underline{\Delta}|_{\Supp(\Delta)}} = f_{\underline{\Delta}}.\]  Since taking strict transforms of divisors commutes with pullback here, the result follows.
\end{proof}

\begin{remark}\label{rem:sum}
Given balanced $(d-1)$-complexes $\underline{\Delta}$ and $\underline{\Delta}'$, it is easy to see that by summing their $R$-weightings one obtains a balanced $(d-1)$-complex supported on a subset of $\Delta\cup\Delta'$.  However, the correspondence in Theorem \ref{thm:CoxCom} is not generally an additive homomorphism with respect to this structure.  Indeed, if there is a simplex $\sigma\in \Delta\cap\Delta'$ and the associated weights satisfy $w_\sigma + w_{\sigma}' = 0$, then it may not be the case that clearing denominators of $f_{\underline{\Delta}},f_{\underline{\Delta}'}$ commutes with taking their sum.
\end{remark}


\subsection{Effective divisor classes}

The following lemma says that the class in $\Pic(\M_{0,n})$ of the polynomial associated to a balanced complex is independent of the balancing weights.  Thus, associated to each balanceable complex $\Delta$ is a well-defined divisor class that we denote by $D_\Delta\in \Pic(\M_{0,n})$.  First some notation: given a multiset $S$ and an element $i\in \Supp(S)$, we define the \emph{multiplicity} of $i$ in $S$, denoted $\mult_i(S)$, to be the number of times $i$ occurs in $S$.

\begin{lemma}\label{lem:DelClass}
All balanced weightings on a $(d-1)$-complex $\Delta$ yield homogeneous elements in $\Cox(\M_{0,n})$ that are sections of the same line bundle isomorphism class.  Specifically, \[D_\Delta = dH - \sum_I \left(d-\max_{\sigma\in\Delta}\left\{\sum_{i\in I}\mult_i(\sigma)\right\}\right)E_I \in \Pic(\M_{0,n}).\]
\end{lemma}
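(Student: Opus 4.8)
The plan is to exploit the explicit description, furnished by the proof of Theorem~\ref{thm:CoxCom}, of the element of $\Cox(\M_{0,n})$ attached to a balanced weighting $\underline{\Delta} = (\Delta,\{w_i\})$: it is obtained by clearing denominators in the Laurent polynomial $f_{\underline{\Delta}} = \sum_i w_i \frac{y_{\sigma_i}}{z_{\sigma_i}}$. Since by Remark~\ref{rem:LaurentClass} the class of $f_{\underline{\Delta}}$ as a rational section is $dH - d\sum_I E_I$, independent of the weights, it suffices to compute precisely how many powers of each $x_I$ are introduced upon clearing denominators, and to observe that this count depends only on $\Delta$ and not on the $w_i$.

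First I would record the $x_I$-content of each monomial denominator. Recalling $z_j = \prod_{I \ni j} x_I$, the exponent of a fixed $x_I$ in $z_{\sigma} = \prod_{j \in \sigma} z_j$ is exactly $\sum_{k \in I} \mult_k(\sigma)$, the number of entries of $\sigma$ (counted with multiplicity) lying in $I$; here I read $\mult_k(\sigma) = 0$ whenever $k \notin \Supp(\sigma)$. Clearing all denominators simultaneously amounts to multiplying $f_{\underline{\Delta}}$ by the least common multiple of the monomials $z_{\sigma_i}$, namely $\prod_I x_I^{m_I}$ with $m_I := \max_{\sigma \in \Delta} \sum_{k \in I} \mult_k(\sigma)$. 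Crucially $m_I$ depends only on the set of simplices comprising $\Delta$, not on the chosen weights.

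Next I would verify that $\prod_I x_I^{m_I} \cdot f_{\underline{\Delta}}$ is genuinely the primitive representative, i.e.\ the honest element of $\Cox(X_n)$ not divisible by any $x_I$ that corresponds to $\underline{\Delta}$ under Theorem~\ref{thm:CoxCom}. The key point is that distinct simplices $\sigma_i$ yield distinct $y$-monomials $y_{\sigma_i}$, since the multiset $\sigma_i$ is recovered from $y_{\sigma_i}$; hence the terms never collide and no cancellation can occur, and because each $w_i$ is nonzero every term survives. Consequently $\prod_I x_I^{m_I} f_{\underline{\Delta}}$ has only nonnegative $x_I$-exponents, and for each $I$ the term achieving the maximum $m_I$ attains $x_I$-exponent $0$, so $x_I$ does not divide the result. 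With the representative pinned down, the class computation is immediate: by Equation~(\ref{eq:PicDeg}) multiplying by $x_I^{m_I}$ adds $m_I E_I$ to the class, yielding $dH - d\sum_I E_I + \sum_I m_I E_I = dH - \sum_I (d - m_I)E_I$, which is the asserted formula. Independence from the balancing follows because neither the Laurent class of Remark~\ref{rem:LaurentClass} nor the exponents $m_I$ involve the $w_i$.

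The step I expect to demand the most care is the bookkeeping of the second and third paragraphs: correctly reading off the exponent $\sum_{k\in I}\mult_k(\sigma)$ of $x_I$ in $z_\sigma$ while respecting the multiset structure of singular simplices, and then the no-cancellation argument certifying $\prod_I x_I^{m_I} f_{\underline{\Delta}}$ as the primitive representative rather than a proper $x_I$-multiple of it. Once these are secured, the remainder is a one-line application of the $\Pic(\M_{0,n})$-grading.
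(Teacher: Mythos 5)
Your proposal is correct and follows exactly the paper's (much terser) argument: start from the weight-independent class $dH - d\sum_I E_I$ of the Laurent polynomial $f_{\underline{\Delta}}$ from Remark~\ref{rem:LaurentClass}, and account for the effect of minimally clearing denominators. Your added bookkeeping---the exponent $\sum_{k\in I}\mult_k(\sigma)$ of $x_I$ in $z_\sigma$, the LCM exponents $m_I$, and the no-cancellation argument showing the result is not divisible by any $x_I$---is precisely the detail the paper leaves implicit, and it is carried out correctly.
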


\begin{proof}
This follows directly from the proof of Theorem \ref{thm:CoxCom} and Remark \ref{rem:LaurentClass}.  Indeed, prior to clearing denominators any Laurent polynomial $f_{\underline{\Delta}}$ has class $dH - \sum dE_I$, and the above formula encodes the effect on this class of minimally clearing denominators.
\end{proof}

\begin{definition}
The divisor class $D_\Delta\in \Pic(\M_{0,n})$ associated to any complex $\Delta$ on $[n-1]$, not just a balanceable complex, is given by the formula in Lemma \ref{lem:DelClass}.
\end{definition}

Some caution is needed here, as it is \emph{not} true that $D_\Delta$ is effective if and only if $\Delta$ is balanceable.  Moreover, we cannot associate to each effective divisor class a well-defined balanceable complex, since as the following example illustrates, different balanceable complexes may yield the same divisor class.  In other words, by varying the weights on a balanced complex one obtains a collection of linearly equivalent effective divisors that need not form a complete linear system.

\begin{example}
The 1-complex on $\{1,2,3,4\}$ given by a square $(\{1,2\},\{2,3\},\{3,4\},\{1,4\})$ with weights $(1,-1,1,-1)$ is clearly balanced, as is the complex $((\{1,2\},\{2,3\},\{3,4\},\{1,4\},\{1,3\},\{2,4\})$ obtained by adding the diagonals and weighting by $(1,1,1,1,-2,-2)$ (see Figure \ref{fig:twosquare}).  The corresponding pair of effective divisors both have class $2H - \sum_{i=1}^4 E_i \in \Pic(\M_{0,5})$.
\end{example}

\begin{figure}\begin{center}
\scalebox{1.0}{\input{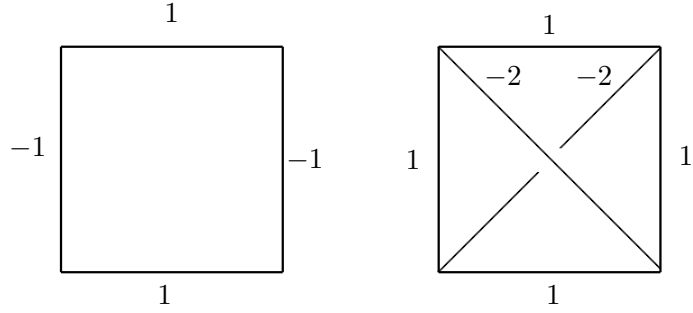}}
\caption{Two balanced 1-complexes with the same class in $\Pic(\M_{0,5}).$}
\label{fig:twosquare}
\end{center}\end{figure}

There are, however, conditions that guarantee there is unique complex of a given divisor class.

\begin{proposition}\label{prop:UniCom}
Let $\Delta$ be a non-singular $(d-1)$-complex, let $\Delta'$ be any complex, and suppose moreover that $d \le n-4$.  Then $D_\Delta = D_{\Delta'}$ if and only if $\Delta' = \Delta$.
\end{proposition}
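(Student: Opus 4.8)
The ``if'' direction is immediate, so I would concentrate on reconstructing $\Delta$ from the class $D_\Delta$. The whole approach rests on reading coefficients off the formula in Lemma \ref{lem:DelClass}, using that $H$ together with the $E_I$ (for $1\le|I|\le n-4$) form a $\mathbb{Z}$-basis of $\Pic(\M_{0,n})$. For a complex $\Theta$ write $m_\Theta(I) := \max_{\sigma\in\Theta}\sum_{i\in I}\mult_i(\sigma)$; then the coefficient of $H$ in $D_\Theta$ is $\dim\Theta+1$ and the coefficient of $E_I$ is $-(\dim\Theta+1-m_\Theta(I))$. So the first step is to observe that $D_\Delta=D_{\Delta'}$ forces $\dim\Delta'=d-1$ by comparing $H$-coefficients, so that $\Delta'$ is automatically a $(d-1)$-complex, and then forces $m_\Delta(I)=m_{\Delta'}(I)$ for every permissible $I$.

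Next I would exploit two specific families of index sets. First the singletons $I=\{i\}$, which are permissible because $n\ge 5$: since $\Delta$ is non-singular, $m_\Delta(\{i\})\le 1$, and matching coefficients then forces $\max_{\sigma'\in\Delta'}\mult_i(\sigma')\le 1$ for every $i$, i.e.\ $\Delta'$ is non-singular as well. Once both complexes are known to be non-singular, their simplices are honest $d$-element subsets of $[n-1]$ and $m_\Theta(I)=\max_{\sigma\in\Theta}|I\cap\sigma|$. Then I would take the sets $I$ with $|I|=d$ --- permissible exactly because the hypothesis gives $d\le n-4$ --- and note that, since $|\sigma|=d=|I|$, one has $|I\cap\sigma|=d$ iff $I=\sigma$, so $m_\Theta(I)=d$ iff $I\in\Theta$ and $m_\Theta(I)\le d-1$ otherwise. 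Hence $I\in\Delta\iff m_\Delta(I)=d\iff m_{\Delta'}(I)=d\iff I\in\Delta'$, and since every simplex of a $(d-1)$-complex is such an $I$, this yields $\Delta=\Delta'$.

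The step I expect to be the genuine obstacle is the non-singularity of $\Delta'$: a priori $\Delta'$ is an arbitrary complex, and if it were singular then $m_{\Delta'}(I)$ would record multiplicities rather than intersection cardinalities, so the size-$d$ reconstruction would break down. The singleton argument is precisely what closes this gap, and it is where the non-singularity hypothesis is used. The remaining bookkeeping --- that both $|I|=1$ and $|I|=d$ are permissible, needing $n\ge 5$ and $d\le n-4$ respectively --- is routine, but it is worth flagging that the bound $d\le n-4$ is exactly what makes the reconstructing index sets of size $d$ available.
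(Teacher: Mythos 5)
Your proposal is correct and follows essentially the same route as the paper: reduce to a $(d-1)$-complex via the $H$-coefficient, force non-singularity of $\Delta'$ by comparing $E_i$-coefficients (the paper packages this as Lemma \ref{lem:nonsing}), and then reconstruct the simplices from the coefficients of the $E_I$ with $|I|=d$, which exist precisely because $d\le n-4$. The paper phrases the last step as: any $\sigma$ in the symmetric difference gives an $E_\sigma$ with coefficient zero in one class and negative in the other — the same observation as your ``$m_\Theta(I)=d$ iff $I\in\Theta$'' criterion.
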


\begin{proof}
Clearly, we may assume that $\Delta'$ is a $(d-1)$-complex.  Moreover, by Lemma \ref{lem:nonsing} below we can assume $\Delta'$ is non-singular.  Let $\sigma \in (\Delta\setminus\Delta')\cup(\Delta'\setminus\Delta)$.  Since $\sigma$ is non-singular it can be regarded as a set, not just a multiset, and the hypothesis $d \le n-4$ implies that there exists a variable $x_{\sigma}\in\Cox(X_n)$.  Then by the formula defining $D_\Delta$ we see that the corresponding class $E_\sigma$ has coefficient zero in one of $D_\Delta,D_{\Delta'}$ and negative coefficient in the other.
\end{proof}

\begin{lemma}\label{lem:nonsing}
If $\Delta$ is a non-singular $(d-1)$-complex, and $\Delta'$ is a singular $(d-1)$-complex, then $D_\Delta$ cannot be written as a sum of $D_{\Delta'}$ and any collection (even the empty one) of exceptional classes $E_I$.
\end{lemma}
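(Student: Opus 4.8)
The plan is to argue by comparing the coefficients of the exceptional classes $E_I$ in the two expressions, exploiting the fact that singular simplices force \emph{smaller} deductions from the $dH$ term than non-singular ones. Recall from Lemma \ref{lem:DelClass} that the coefficient of $E_I$ in $D_\Delta$ is $-(d - \max_{\sigma\in\Delta}\{\sum_{i\in I}\mult_i(\sigma)\})$, so to compare $D_\Delta$ against a sum $D_{\Delta'} + \sum_J E_J$ (with the $E_J$ entering with nonnegative integer coefficients) it suffices to find a single index set $I$ witnessing a contradiction. The key quantity to track is, for each simplex $\sigma$, the maximal total multiplicity $\sum_{i\in I}\mult_i(\sigma)$ achievable over index sets $I$; the crucial observation is that a singular simplex $\sigma'$, having a repeated vertex, can attain total multiplicity $d+1$ on a \emph{proper} subset of its support, whereas a non-singular simplex needs all $d+1$ of its vertices to reach that total.

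First I would make the elementary reduction that both expressions share the same $dH$-coefficient, which is forced since every $D_{\Delta''}$ and every $E_J$ has a well-defined degree and $E_J$ has degree zero; thus the identity $D_\Delta = D_{\Delta'} + \sum_J c_J E_J$ is only possible if $\Delta'$ is also a $(d-1)$-complex, matching the leading $dH$ term. Next, I would pick a singular simplex $\sigma'\in\Delta'$, say with a vertex $i$ of multiplicity $\ge 2$, and consider the index set $I = \Supp(\sigma')$, which has cardinality $|I| \le d$ (strictly less than $d+1$ because of the repetition). For this $I$ the simplex $\sigma'$ already contributes $\sum_{i\in I}\mult_i(\sigma') = d+1$ to the max in the formula for $D_{\Delta'}$, so the coefficient of $E_I$ in $D_{\Delta'}$ is $-(d - (d+1)) = 1 > 0$, i.e.\ $E_I$ appears with \emph{positive} coefficient. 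Meanwhile, since $\Delta$ is non-singular every simplex contributes at most $|I|$ to that same max, so the coefficient of $E_I$ in $D_\Delta$ is $-(d - |I|) \le 0$.

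The contradiction then comes from the sign analysis: on the left side $D_\Delta$ gives a coefficient $\le 0$ for $E_I$, while on the right side $D_{\Delta'}$ contributes a strictly positive coefficient and each additional $E_J$ can only contribute nonnegatively (when $J = I$) or zero (when $J \ne I$), so the total coefficient of $E_I$ on the right is strictly positive. The two coefficients of $E_I$ cannot agree, which is the desired contradiction. The one point requiring care — and the likely main obstacle — is verifying that the index set $I = \Supp(\sigma')$ is \emph{permissible}, i.e.\ that $1 \le |I| \le n-4$ so that $E_I$ is genuinely an exceptional divisor in $\Pic(\M_{0,n})$; here I would use the hypothesis implicit in the setup that $d - 1 \le n - 5$ (equivalently $d \le n-4$), which together with $|I| \le d$ guarantees $|I| \le n-4$, while $|I|\ge 1$ is automatic. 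I expect the rest of the argument to be a routine bookkeeping of the multiplicity maxima and signs, with no genuine difficulty beyond this permissibility check.
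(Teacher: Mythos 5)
Your underlying mechanism---comparing the coefficient of a single exceptional class via the formula in Lemma \ref{lem:DelClass}, using the fact that a singular simplex piles its multiplicity onto few vertices---is exactly the mechanism of the paper's proof, but your choice of witness $I=\Supp(\sigma')$ opens a genuine gap. The class $E_I$ appears in the formula for $D_\Delta$ only when $1\le |I|\le n-4$, so your argument needs a bound on $d$ in terms of $n$; the lemma carries no such hypothesis, and nothing in its statement or in Lemma \ref{lem:DelClass} supplies one, so your claim that $d\le n-4$ is ``implicit in the setup'' is incorrect. (That bound is a hypothesis of Propositions \ref{prop:UniCom} and \ref{prop:EffIff}, where the lemma gets applied, but not of the lemma itself, and the lemma is true without it.) As written, your argument therefore proves a strictly weaker statement. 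The paper avoids the issue by shrinking the witness to a singleton: choose a vertex $i$ with $\mult_i(\sigma')\ge 2$ for some $\sigma'\in\Delta'$ and take $I=\{i\}$. Then the coefficient of $E_i$ in $D_{\Delta'}$ is $-(d-\max_{\sigma\in\Delta'}\mult_i(\sigma))\ge -(d-2)$, whereas in $D_\Delta$ it is at most $-(d-1)$ since $\Delta$ is non-singular; adding classes $E_J$ to $D_{\Delta'}$ can only increase coefficients, so the decomposition is impossible. Singletons are permissible for every $n\ge 5$, so no relation between $d$ and $n$ enters; this one-line change repairs your proof and recovers the full statement.

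Separately, you have mixed the paper's two indexing conventions. In Lemma \ref{lem:DelClass}, and hence in this lemma, the objects are $(d-1)$-complexes whose simplices are multisets of cardinality $d$; the ``cardinality $d+1$, degree $d+1$'' convention belongs to Theorem \ref{thm:IntroCorresp} in the introduction. Consequently, for $I=\Supp(\sigma')$ one has $\sum_{i\in I}\mult_i(\sigma')=d$ rather than $d+1$, and $|I|\le d-1$ rather than $d$, so the coefficients you compute as $1$ and $\le 0$ are actually $0$ and $\le -1$. The strict inequality, and hence the contradiction, survives this shift, so the slip does not break the structure of your argument, but the constants and the resulting permissibility threshold would need correcting even in the regime where your choice of $I$ is legitimate.
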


\begin{proof}
By assumption, there is a simplex $\sigma \in \Delta'$ with an index $i$ of multiplicity at least two, whereas $i$ has multiplicity at most one in all simplices of $\Delta$.  Therefore, the coefficient of $E_i$ is strictly greater in $D_{\Delta'}$ than in $D_\Delta$.
\end{proof}

\begin{proposition}\label{prop:EffIff}
Let $\Delta$ be a non-singular $(d-1)$-complex, with $d \le n-4$, and suppose every proper subcomplex $\Delta'\subsetneq \Delta$ is not balanceable.  Then $D_\Delta$ is effective if and only if $\Delta$ is balanceable.
\end{proposition}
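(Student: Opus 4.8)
The plan is to deduce both directions from the dictionary of Theorem \ref{thm:CoxCom} together with the coefficient bookkeeping of Lemma \ref{lem:DelClass} and Lemma \ref{lem:nonsing}. The \emph{if} direction is immediate: if $\Delta$ is balanceable, pick any balancing; Theorem \ref{thm:CoxCom} produces a nonzero element $f_{\underline{\Delta}} \in \Cox(\M_{0,n})$ whose class is $D_\Delta$ by Lemma \ref{lem:DelClass}, and this element is by definition a section exhibiting $D_\Delta$ as effective.

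For the \emph{only if} direction, suppose $D_\Delta$ is effective and choose a nonzero homogeneous section $s \in \Cox(\M_{0,n}) = \Cox(X_n)^{\Ga}$ of class $D_\Delta$. Since each exceptional section $x_I$ is $\Ga$-invariant, I would factor out the largest monomial $\prod_I x_I^{a_I}$ (with $a_I \ge 0$) dividing $s$, obtaining $s' = s / \prod_I x_I^{a_I}$, which is still $\Ga$-invariant, is divisible by no $x_I$, and has the same $H$-degree $d$ (dividing by the degree-zero-in-$H$ sections $x_I$ does not change it). Its class is therefore $D_\Delta - \sum_I a_I E_I$. By Theorem \ref{thm:CoxCom}, $s'$ corresponds to a \emph{balanced} $(d-1)$-complex $\Delta'$, and by Lemma \ref{lem:DelClass} we have $D_{\Delta'} = D_\Delta - \sum_I a_I E_I$ with all $a_I \ge 0$.

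Next I would pin down $\Delta'$. Because $\Delta$ is non-singular and $D_\Delta = D_{\Delta'} + \sum_I a_I E_I$, Lemma \ref{lem:nonsing} forces $\Delta'$ to be non-singular as well. Comparing $E_I$-coefficients in the displayed formula of Lemma \ref{lem:DelClass} and using $a_I \ge 0$ gives, for every permissible index set $I$,
\[
\max_{\sigma\in\Delta'}\sum_{i\in I}\mult_i(\sigma) \;\le\; \max_{\sigma\in\Delta}\sum_{i\in I}\mult_i(\sigma).
\]
I would then test this on $I = \sigma_0$ for an arbitrary $\sigma_0 \in \Delta'$: since $\sigma_0$ is non-singular it is a genuine set of $d$ elements, and the hypothesis $d \le n-4$ guarantees $1 \le |I| \le n-4$, so $E_{\sigma_0}$ is an honest variable and the inequality applies. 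Its left-hand side equals $d$ (attained at $\sigma = \sigma_0$), so the right-hand side is at least $d$; as every simplex of $\Delta$ has exactly $d$ elements, some $\sigma \in \Delta$ satisfies $|\sigma \cap \sigma_0| = d$, i.e. $\sigma = \sigma_0$. Thus $\sigma_0 \in \Delta$, and since $\sigma_0$ was arbitrary, $\Delta' \subseteq \Delta$.

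Finally, $\Delta'$ is a balanceable subcomplex of $\Delta$, so the hypothesis that no proper subcomplex of $\Delta$ is balanceable forces $\Delta' = \Delta$; in particular $\Delta$ is balanceable, completing the proof. I expect the main obstacle to be the containment step $\Delta' \subseteq \Delta$: the subtlety is that an arbitrary effective representative of $D_\Delta$ need not be the canonical $f_{\underline{\Delta}}$ and could a priori involve a different, even singular, complex after clearing exceptional factors. The argument must therefore invoke Lemma \ref{lem:nonsing} to exclude singular $\Delta'$ and use $d \le n-4$ to make the test index set $I = \sigma_0$ permissible; everything else is formal.
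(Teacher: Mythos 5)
Your proof is correct and takes essentially the same route as the paper's: factor out all exceptional sections, invoke Theorem~\ref{thm:CoxCom} and Lemma~\ref{lem:DelClass} to obtain a balanced $(d-1)$-complex $\Delta'$ with $D_\Delta = D_{\Delta'} + \sum_I a_I E_I$, use Lemma~\ref{lem:nonsing} to force $\Delta'$ non-singular, and then test the $E_{\sigma_0}$-coefficient for $\sigma_0 \in \Delta'$ (which is permissible exactly because $d \le n-4$) to conclude $\Delta' \subseteq \Delta$, at which point the hypothesis on proper subcomplexes finishes the argument. The only cosmetic differences are that you argue directly rather than by contradiction, and your uniform handling of the case where all $a_I = 0$ inlines the coefficient comparison that the paper delegates to Proposition~\ref{prop:UniCom}.
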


\begin{proof}
If $\Delta$ is balanceable then by definition $D_\Delta$ is effective.  Conversely, suppose $\Delta$ is not balanceable.  By Theorem \ref{thm:CoxCom}, any section of $D_\Delta$ then corresponds to either (1) a homogeneous element $f\in\Cox(\M_{0,n})$ divisible by some $x_I$, or (2) a balanced weighting on a complex distinct from $\Delta$.  The second case is ruled out by Proposition \ref{prop:UniCom}, so consider the first case.  By factoring out as many $x_I$ as possible we obtain $f = (\prod x_I)g$ where $g\in\Cox(\M_{0,n})$ corresponds to a balanced complex with underlying complex $\Delta'$.  Then we have \[D_{\Delta} = [f] = D_{\Delta'} + \sum E_I,\] so $\Delta' \ne \Delta$ and by Lemma \ref{lem:nonsing} we know that $\Delta'$ is non-singular.  We claim that $\Delta' \subsetneq \Delta$.  If $\sigma\in \Delta'$ then, since $\sigma$ is non-singular and $d \le n-4$, there is a variable $x_\sigma \in \Cox(X_n)$.  We see from the formula in Lemma \ref{lem:DelClass} that the coefficient of $E_\sigma$ in $D_{\Delta'}$ is zero, hence it is also zero in $D_\Delta$, so by the same formula we must have $\sigma\in\Delta$.
\end{proof}

\begin{remark}
Recall that the set of effective divisor classes for a smooth, projective variety $X$ naturally forms a monoid, denoted $M(X)$.  This monoid is sharp, meaning that the only unit is zero, since a nontrivial line bundle and its dual cannot both admit a global section.  If $\Pic(X)$ is finitely generated and torsion-free, then $M(X)$ is also integral and all non-empty subsets contain a minimal element, so by \cite[Remark 2.1.4]{Ogu06} there is a unique minimal generating set, namely, the irreducible elements of the monoid.  Recall that an element of a sharp additive monoid is \emph{irreducible} if it is nonzero and it cannot be written as a sum of elements unless one of them is zero.
\end{remark}

In the context of $\M_{0,n} \cong \Bl\PP^{n-3}$, an obvious condition for a class to be reducible is if one can subtract off any exceptional divisors from it.  For most purposes it therefore suffices to consider effective classes for which no exceptional classes can be removed; the following result says these classes are completely determined by our framework of balanced complexes.

\begin{corollary}\label{cor:AllClass}
Let $D\in \Pic(\M_{0,n})$ be a class such that $D-E_I$ is not effective for any $I$.  Then $D$ is effective if and only if there is a balanceable complex $\Delta$ with $D_\Delta = D$.
\end{corollary}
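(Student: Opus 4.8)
The plan is to prove the corollary as a direct consequence of Theorem~\ref{thm:CoxCom} together with Lemma~\ref{lem:DelClass}. The key observation is that the hypothesis $D - E_I$ is not effective for every $I$ is precisely what forces any section of $D$ to be \emph{not divisible by any exceptional divisor section} $x_I$, which is exactly the regime covered by the bijection in Theorem~\ref{thm:CoxCom}.

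First I would establish the easy direction. If there is a balanceable complex $\Delta$ with $D_\Delta = D$, then by Lemma~\ref{lem:DelClass} (which guarantees $D_\Delta$ is the class of the homogeneous element $f_{\underline{\Delta}} \in \Cox(\M_{0,n})$ attached to any balancing) the class $D$ admits a nonzero section, hence is effective. This requires no use of the hypothesis on $D$.

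For the converse, suppose $D$ is effective, so there is a nonzero homogeneous element $f \in \Cox(\M_{0,n})$ of class $D$. The crucial step is to argue that $f$ is not divisible by any $x_I$: if instead $f = x_I \cdot g$ for some homogeneous $g$, then $g$ is a section of class $D - E_I$, contradicting the assumption that $D - E_I$ is not effective. (Here one reads off the class of $g$ from the grading $[x_I] = E_I$ of Equation~(\ref{eq:PicDeg}).) Having ruled out divisibility, I can write $D = dH - \sum c_I E_I$ for the appropriate degree $d$ and apply Theorem~\ref{thm:CoxCom}: the element $f$ corresponds to a balanced $(d-1)$-complex $\Delta$, which is in particular balanceable. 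Finally, by Lemma~\ref{lem:DelClass} the class of this complex is $D_\Delta = [f] = D$, completing the converse.

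The only point requiring any care is the reduction to the \emph{not divisible by any $x_I$} hypothesis of Theorem~\ref{thm:CoxCom}; everything else is a citation of the already-established bijection and class formula. I do not expect a genuine obstacle here, since the statement is essentially a restatement of Theorem~\ref{thm:CoxCom} under the extra assumption on $D$, which is engineered exactly to match the non-divisibility condition in that theorem. The main subtlety to keep in mind is that $\Delta$ need not be unique (as the square-versus-square-with-diagonals example shows), but the corollary only asserts existence of some such $\Delta$, so non-uniqueness is harmless.
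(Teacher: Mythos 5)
Your proposal is correct and follows essentially the same route as the paper's own proof: the hypothesis that $D - E_I$ is never effective rules out divisibility of a section by any $x_I$ (via the grading $[x_I] = E_I$), after which Theorem~\ref{thm:CoxCom} and Lemma~\ref{lem:DelClass} give the balanced complex with the right class. Your added remark about non-uniqueness of $\Delta$ is a harmless clarification, not a deviation.
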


\begin{proof}
If there is a balanceable complex $\Delta$ with $D_\Delta = D$, then obviously $D$ is effective.  Conversely, suppose $D$ is effective, and consider a section $f \in |D|$.  If $f$ is divisible by some $x_I$, then $D-E_I$ is effective, contradicting the hypothesis.  Thus $f$ satisfies the conditions of Theorem \ref{thm:CoxCom} and therefore corresponds to a balanced complex $\Delta$ with class $D_\Delta = D$.
\end{proof}

\begin{remark}\label{rem:MonIrr}
Since the balancing condition depends strongly on the base ring $R$ (for instance, see Proposition \ref{prop:chardep}), this result illustrates an intricate characteristic-dependent behavior of the set of effective divisor classes on $\M_{0,n}$ that appears far from evident from a direct modular perspective.  We explore this in more detail below in the case of degree two divisors (cf., Theorem \ref{thm:CharTwo}).
\end{remark}

If $\Delta$ and $\Delta'$ are balanceable complexes and $\Delta' \subseteq \Delta$, then $D_\Delta = D_{\Delta'} + \sum E_I$ for some collection (possibly empty) of exceptional classes, by Lemma \ref{lem:DelClass}.  This motivates the following concept.

\begin{definition}
A balanceable complex $\Delta$ is \emph{minimal} if for any proper subset $\Delta' \subsetneq \Delta$, the complex $\Delta'$ is not balanceable.
\end{definition}

\begin{proposition}\label{prop:MinCyc}
Suppose $R$ is a field and $\Delta$ is a minimal complex.  Then the set of $R$-weightings that balance $\Delta$ (together with zero) forms a 1-dimensional vector space.
\end{proposition}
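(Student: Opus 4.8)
The plan is to reinterpret the balancing conditions as a homogeneous linear system and show its solution space is one-dimensional. Write $\Delta = \{\sigma_1,\dots,\sigma_r\}$ and regard a tuple $w = (w_1,\dots,w_r) \in R^r$ as an assignment of weights. Each balancing equation $\sum_{\sigma_i \supseteq S} w_i = 0$ (ranging over multisets $S$ with $|S| \le d-1$) is linear and homogeneous in the $w_i$, so the set $V \subseteq R^r$ of tuples satisfying every balancing equation is an $R$-linear subspace. Since $R$ is a field, $V$ is a vector space, and the $R$-weightings that balance $\Delta$ are precisely the points of $V$ all of whose coordinates are nonzero. The proposition amounts to showing $\dim_R V = 1$ and that adjoining $0$ to the balancings recovers all of $V$.

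For the lower bound, recall that a minimal complex is by definition balanceable, so there exists a balancing $w$, i.e., a point of $V$ with every coordinate nonzero; in particular $V \neq 0$ and $\dim_R V \ge 1$.

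For the upper bound I would argue by contradiction: suppose $\dim_R V \ge 2$ and pick $w' \in V$ linearly independent from the balancing $w$ above. Since $w_1 \neq 0$, the combination $w'' := w' - (w_1'/w_1)\,w$ lies in $V$, is nonzero (by independence), and satisfies $w''_1 = 0$. Let $\Delta' \subseteq \Delta$ be the subcomplex of those $\sigma_i$ with $w''_i \neq 0$. The key observation is that restricting $w''$ to $\Delta'$ yields a balancing of $\Delta'$: for any multiset $S$, deleting the indices $i$ with $w''_i = 0$ does not change the sum, so $\sum_{\sigma_i \in \Delta',\, \sigma_i \supseteq S} w''_i = \sum_{\sigma_i \supseteq S} w''_i = 0$, while all remaining weights are nonzero by construction. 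Thus $\Delta'$ is balanceable; it is nonempty since $w'' \neq 0$, and proper since $\sigma_1 \notin \Delta'$. This contradicts the minimality of $\Delta$, so $\dim_R V \le 1$.

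Combining the two bounds gives $\dim_R V = 1$. Finally, since $V$ is spanned by the balancing $w$, whose coordinates are all nonzero, every nonzero element of $V$ is a scalar multiple $cw$ with $c \neq 0$ and hence is itself a balancing; thus the balancings of $\Delta$ together with $0$ are exactly $V$, a one-dimensional vector space. I expect the only delicate point to be the key observation in the upper bound---that a solution of the balancing system with some vanishing coordinates restricts to a genuine (all-nonzero) balancing of its support-subcomplex---which is exactly where the definition of balanceability for subcomplexes and the minimality hypothesis interact.
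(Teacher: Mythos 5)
Your proof is correct and follows essentially the same route as the paper: both take two non-proportional solutions of the (linear, homogeneous) balancing system, form a combination that kills the weight on a chosen simplex (the paper uses $w_j'w_i - w_jw_i'$ to avoid division, you use $w' - (w_1'/w_1)w$), and observe that the support of this nonzero solution is a proper balanceable subcomplex, contradicting minimality. The linear-algebra framing and the final step identifying balancings $\cup\,\{0\}$ with the full solution space are just a more explicit packaging of what the paper does implicitly.
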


\begin{proof}
Write $\Delta = \{\sigma_1,\ldots,\sigma_m\}$.  The fact that $R$ has no zero-divisors implies that if $\{w_i\}$ is an $R$-weighting that balances $\Delta$, then $\{rw_i\}$ is also an $R$-weighting balancing $\Delta$, for any $r \in R\setminus\{0\}$.  Suppose there is a balancing $\{w'_i\}$ that is not a constant multiple of the balancing $\{w_i\}$.  The Laurent polynomials $f_{(\Delta,\{w_i\})}$ and $f_{(\Delta,\{w_i'\})}$ have the same terms, which are in bijection with $\Delta$.  Choose a simplex $\sigma_j\in\Delta$ and consider the collection of weights $\{w_j'w_i - w_jw_i'\}_{i=1}^m$.  By assumption these are not all zero, and the subset of nonzero elements balances the corresponding subset of $\Delta$ (cf., Remark \ref{rem:sum}).  Since this latter subset is contained in $\Delta \setminus \{\sigma_j\}$, we obtain a contradiction to the hypothesis that $\Delta$ is minimal.
\end{proof}

\begin{proposition}\label{prop:MinSys}
If $\Delta$ is a non-singular, minimal $(d-1)$-complex, with $d\le n-4$, then the set of balancing weights on $\Delta$ corresponds to the complete linear system $|D_{\Delta}|$.
\end{proposition}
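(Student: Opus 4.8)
The plan is to exhibit, working over a field $R$ as in Proposition \ref{prop:MinCyc}, a bijection between balancing weights on $\Delta$ and the members of $|D_\Delta|$, realized by the assignment $\underline{\Delta} \mapsto f_{\underline{\Delta}}$ from the proof of Theorem \ref{thm:CoxCom}. One inclusion is essentially free: by Lemma \ref{lem:DelClass} each balancing weight yields, after clearing denominators, a section of $D_\Delta$, and by Proposition \ref{prop:MinCyc} the balancing weights form a one-dimensional vector space, so up to scalar they produce a single divisor in $|D_\Delta|$. The substance of the proposition is the reverse inclusion, that \emph{every} section of $D_\Delta$ comes from a balancing weight on $\Delta$, which I would prove by a dichotomy on divisibility by the exceptional variables $x_I$.

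First I would take an arbitrary $f \in H^0(\M_{0,n}, D_\Delta)$. If $f$ is divisible by no $x_I$, then Theorem \ref{thm:CoxCom} presents $f$ as the element associated to a balanced complex $\Delta''$, and Lemma \ref{lem:DelClass} gives $D_{\Delta''} = [f] = D_\Delta$; since $\Delta$ is non-singular and $d \le n-4$, Proposition \ref{prop:UniCom} forces $\Delta'' = \Delta$, so $f$ is a balancing weight on $\Delta$ and we are done. This case is immediate.

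The hard part will be the complementary case, where $f$ is divisible by some $x_I$, and here I would replay the argument of Proposition \ref{prop:EffIff}. Factoring out exceptional variables maximally writes $f = (\prod x_I) g$ with $g$ divisible by no $x_I$, so Theorem \ref{thm:CoxCom} identifies $g$ with a balanced complex $\Delta'$ satisfying $D_{\Delta'} = [g] = D_\Delta - \sum E_I$ for a nonempty collection of exceptional classes. Lemma \ref{lem:nonsing} then rules out $\Delta'$ being singular, and comparing the coefficient of $E_\sigma$ in $D_{\Delta'}$ and $D_\Delta$ for each $\sigma \in \Delta'$---here using $d \le n-4$ to ensure the variable $x_\sigma$ exists and the non-singularity of $\Delta'$ to make the maximal multiplicity sum over $\sigma$ equal to $d$---shows that every such $\sigma$ already lies in $\Delta$, whence $\Delta' \subsetneq \Delta$ is a proper balanceable subcomplex. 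This contradicts the minimality of $\Delta$, so the case is vacuous. I expect the only delicate bookkeeping to be verifying that the coefficient of $E_\sigma$ in $D_{\Delta'}$ vanishes precisely when $\sigma \in \Delta'$, exactly as in Proposition \ref{prop:EffIff}. Assembling both cases establishes the reverse inclusion, and combined with the one-dimensionality from Proposition \ref{prop:MinCyc} this identifies the balancing weights with $|D_\Delta|$ and in particular gives $h^0(\M_{0,n}, D_\Delta) = 1$.
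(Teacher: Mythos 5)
Your proof is correct and takes essentially the same approach as the paper's: a dichotomy on divisibility of a section by the exceptional variables $x_I$, where Proposition \ref{prop:UniCom} rules out a balanced complex other than $\Delta$ in the non-divisible case, and the argument of Proposition \ref{prop:EffIff} produces a proper balanceable subcomplex $\Delta' \subsetneq \Delta$ contradicting minimality in the divisible case. The only difference is that you spell out in full the steps the paper invokes by citation.
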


\begin{proof}
Let $f\in |D_{\Delta}|$.  There are two cases we must rule out: (1) $f$ is divisible by some $x_I$, and (2) $f$ is not divisible by any $x_I$ but it corresponds to a balanced complex supported on a complex $\Delta' \ne \Delta$.   The second of these cannot occur due to Proposition \ref{prop:UniCom}, so suppose the first occurs.  The argument in the proof of Proposition \ref{prop:EffIff} shows that there is a balanceable complex $\Delta' \subsetneq \Delta$; but this contradicts the minimality hypothesis.
\end{proof}

\begin{corollary}\label{cor:UniSec}
If $\Delta$ is a non-singular, minimal $(d-1)$-complex over a field, with $d\le n-4$, then $h^0(\M_{0,n},D_\Delta) = 1$.
\end{corollary}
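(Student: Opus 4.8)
The plan is to deduce this immediately by combining the two preceding propositions. First I would invoke Proposition \ref{prop:MinSys}: since $\Delta$ is a non-singular minimal $(d-1)$-complex with $d \le n-4$, the complete linear system $|D_\Delta|$ is identified with the set of weightings that balance $\Delta$. Concretely, this identification comes from the correspondence of Theorem \ref{thm:CoxCom}, under which a balancing $\{w_i\}$ gives rise to the section obtained by clearing denominators of $f_{\underline{\Delta}} = \sum_i w_i \frac{y_{\sigma_i}}{z_{\sigma_i}}$. Because the underlying complex $\Delta$ is fixed, the monomial by which one multiplies to clear denominators depends only on the simplices of $\Delta$ and not on the weights, so this assignment from weightings to sections is $k$-linear in the $w_i$. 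Thus the nonzero elements of $\HH^0(\M_{0,n}, D_\Delta)$ are precisely the sections attached to balancings of $\Delta$.

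Next I would apply Proposition \ref{prop:MinCyc}, which is available because $R = k$ is a field and $\Delta$ is minimal: the set of balancing weights on $\Delta$, together with the zero weighting, forms a one-dimensional $k$-vector space. Transporting this along the $k$-linear identification above, the space $\HH^0(\M_{0,n}, D_\Delta)$ (whose nonzero elements are exactly the sections corresponding to balancings, with the zero section adjoined) is a one-dimensional $k$-vector space. Hence $h^0(\M_{0,n}, D_\Delta) = 1$.

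I do not expect a serious obstacle here, as the corollary is essentially a repackaging of the two propositions. The only point requiring care is the bookkeeping of the vector-space structure: Proposition \ref{prop:MinSys} is phrased in terms of the \emph{set} of balancings matching the (projective) linear system $|D_\Delta|$, whereas Proposition \ref{prop:MinCyc} records a \emph{linear} one-dimensionality for the balancings together with zero. One must therefore confirm that the bijection between balancings and sections supplied by Theorem \ref{thm:CoxCom} is additive and homogeneous, so that projectivizing a one-dimensional vector space yields a single point of $|D_\Delta|$. As noted, this linearity holds because, for a fixed complex, clearing denominators amounts to multiplication by one fixed monomial, with no weight-dependent rescaling intervening. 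Once this is in hand, the dimension count is immediate.
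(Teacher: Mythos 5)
Your proof is correct and follows exactly the paper's route: the paper's own proof is the one-line deduction from Propositions \ref{prop:MinCyc} and \ref{prop:MinSys}. Your extra care in noting that clearing denominators is multiplication by a fixed monomial (so the weighting-to-section correspondence is $k$-linear) fills in a detail the paper leaves implicit, but does not change the argument.
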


\begin{proof}
This follows immediately from Propositions \ref{prop:MinCyc} and \ref{prop:MinSys}.
\end{proof}

Minimality is helpful in ruling out one type of reducibility of elements in the monoid of effective divisor classes, namely, the existence of a decomposition $D = D' + \sum E_I$ where $\deg(D')=\deg(D)$.  Another, more subtle, form of reducibility is when $D = D_1 + D_2$ with $\deg(D_i) < \deg(D)$.  The following definition is aimed at studying this phenomenon.

\begin{definition}
For a $d_1$-complex $\Delta_1$ and a $d_2$-complex $\Delta_2$, their \emph{product} is the $(d_1+d_2+1)$-complex \[\Delta_1\cdot \Delta_2 = \{\sigma_1\cup \sigma_2~|~\sigma_i\in\Delta_i\}.\]
\end{definition}

\begin{remark}
If $\underline{\Delta}_1$, $\underline{\Delta}_2$ are balanced complexes, corresponding to polynomials $g_1,g_2\in\Cox(\M_{0,n})$, then it is not generally the case that $g_1g_2\in\Cox(\M_{0,n})$ corresponds to a balanced $R$-weighting on the complex $\Delta_1\cdot\Delta_2$, since cancellation may occur when expanding out this product.
\end{remark}

\begin{theorem}\label{thm:CoxGen}
Let $\Delta$ be a non-singular minimal $(d-1)$-complex, $d \le n-4$, over a field, and suppose there is no decomposition $\Delta = \Delta_1 \cdot \Delta_2$ with $\Delta_i$ minimal non-singular complexes satisfying $\Supp(\Delta_1) \cap \Supp(\Delta_2) = \varnothing$.  Then $D_\Delta$ is irreducible in $M(\M_{0,n})$ and every generating set for $\Cox(\M_{0,n})$ includes the unique (up to scalar) section of $D_\Delta$.
\end{theorem}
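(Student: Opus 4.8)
The plan is to derive both assertions from the uniqueness of the section in Corollary~\ref{cor:UniSec} together with the factorization behavior of the associated multilinear polynomial. By that corollary $h^0(\M_{0,n}, D_\Delta) = 1$; write $f$ for the essentially unique section, which by Proposition~\ref{prop:MinSys} is a balancing of $\Delta$ and in particular is divisible by no $x_I$. The two claims are linked: I would first prove that $D_\Delta$ is irreducible in $M(\M_{0,n})$, and then read off that $f$ is a necessary generator of $\Cox(\M_{0,n})$ as a formal consequence. To prove irreducibility, suppose $D_\Delta = A + B$ with $A, B$ nonzero effective, pick nonzero sections $a \in H^0(A)$, $b \in H^0(B)$, and observe that $ab$ lies in the one-dimensional space $H^0(D_\Delta)$, so $ab = cf$ for a scalar $c$. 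Since $\Cox(X_n)$ is a polynomial ring (a UFD) and $f$ is divisible by no $x_I$, neither is $a$ nor $b$. If $\deg A = 0$ then $A$ is a nonnegative combination of the $E_I$, whose only sections up to scalar are the corresponding $x$-monomials; $x$-indivisibility then forces $A = 0$, a contradiction, and symmetrically for $B$.

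The hard part is the remaining case $\deg A, \deg B \ge 1$, where I must rule out the subtle form of reducibility. Here I would invert all $x_I$ and, exactly as in the proof of Theorem~\ref{thm:CoxCom}, replace $a, b, f$ by their associated $\Ga$-invariant (i.e.\ translation-invariant) Laurent polynomials, reducing $ab = cf$ (after absorbing a common $x$-monomial) to an identity $\tilde f = c'\,\tilde a\,\tilde b$ among homogeneous polynomials in the variables $u_i := y_i/z_i$, where $\tilde f = \sum_{\sigma \in \Delta} w_\sigma \prod_{j \in \sigma} u_j$ is \emph{multilinear} because $\Delta$ is non-singular. The key elementary observation is that for every variable $u_j$ one has $\deg_{u_j}(\tilde a) + \deg_{u_j}(\tilde b) = \deg_{u_j}(\tilde f) \le 1$ (degrees in a single variable add over a domain). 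This simultaneously shows that $\tilde a$ and $\tilde b$ are themselves multilinear and that their variable sets are disjoint; hence the product $\tilde a\,\tilde b$ has no cancellation, its monomials match those of $\tilde f$ bijectively, and the underlying complexes satisfy $\Delta = \Delta_A \cdot \Delta_B$ with $\Supp(\Delta_A) \cap \Supp(\Delta_B) = \varnothing$ and both factors non-singular.

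It then remains to upgrade $\Delta_A, \Delta_B$ to \emph{minimal} complexes so as to contradict the hypothesis, and this is where the disjoint-support structure pays off. I would argue that if, say, $\Delta_A$ admitted a balanceable proper subcomplex $\Delta_A' \subsetneq \Delta_A$, then—since the supports are disjoint, so that products of balanced weightings never cancel and the product of balanceable complexes is again balanceable—the complex $\Delta_A' \cdot \Delta_B$ would be a balanceable proper subcomplex of $\Delta_A \cdot \Delta_B = \Delta$, violating the minimality of $\Delta$. Hence $\Delta_A, \Delta_B$ are minimal, non-singular, and disjointly supported, directly contradicting the standing assumption. This excludes the positive-degree case and establishes that $D_\Delta$ is irreducible in $M(\M_{0,n})$.

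Finally I would deduce the Cox-ring statement formally. Since $\Cox(\M_{0,n})$ is graded by the sharp monoid $M(\M_{0,n})$ with degree-zero part equal to $k$ (as $H^0(\mathcal{O}) = k$ over a field), its minimal generators correspond to a $k$-basis of $S_+/S_+^2$, where $S_+$ is the span of the positive-class graded pieces. Irreducibility of $D_\Delta$ says precisely that no product of two homogeneous elements of nonzero class has class $D_\Delta$, i.e.\ $S_{D_\Delta} \cap S_+^2 = 0$; since $S_{D_\Delta}$ is one-dimensional and spanned by $f$, the image of $f$ in $S_+/S_+^2$ is nonzero and lies in the $D_\Delta$-graded piece, whose only representatives are scalar multiples of $f$. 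Therefore any homogeneous generating set must contain a nonzero scalar multiple of $f$, which is the desired conclusion. I expect the multilinear factorization and the disjoint-support upgrade to minimal factors to be the only genuinely delicate steps; the degree-zero case and the passage from irreducibility to the generator statement are routine.
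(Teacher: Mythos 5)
Your proposal is correct and follows essentially the same route as the paper's proof: both reduce a putative decomposition $D_\Delta = A + B$ to a factorization of the unique section, use non-singularity of $\Delta$ (multilinearity of the invariant polynomial) together with additivity of single-variable degrees to force disjointly supported, non-singular, multilinear factors, upgrade those factors to minimal complexes whose product is $\Delta$ (contradicting the hypothesis), and then deduce the Cox-ring statement by a formal graded/monoid argument. The only cosmetic differences are that the paper phrases the disjointness step via the top $y_\ell$-exponent appearing in the product rather than degree additivity, and derives the generator statement at the level of the effective monoid rather than via $S_+/S_+^2$.
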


\begin{proof}
The hypotheses of Corollary \ref{cor:UniSec} are satisfied, so $|D_\Delta|$ is spanned by a single element, call it $f$, which is not divisible by any exceptional divisor section $x_I$.  This latter condition implies that there is no non-trivial expression of the form $D_\Delta = D+\sum E_I$ with $D$ effective.  Therefore, the only way $D_\Delta$ could be reducible is if we have a decomposition $D_\Delta = D_1 + D_2$ with $\deg(D_i) < D$.  Such a decomposition would imply a factorization $f = f_1f_2$.  Since $f$ is not divisible by any $x_I$, neither are the $f_i$, so they correspond to balanced complexes $\underline{\Delta}_1$ and $\underline{\Delta}_2$.  We claim that the underlying complexes $\Delta_1$ and $\Delta_2$ are supported on disjoint subsets of $[n-1]$: \[\Supp(\Delta_1) \cap \Supp(\Delta_2) = \varnothing.\]  Indeed, suppose $\ell\in \sigma_1\cap\sigma_2$ for some index $\ell\in [n-1]$ and simplices $\sigma_i \in \Delta_i$.  Let $k_i \ge 1$ be the largest exponent of $y_\ell$ appearing in the Laurent polynomial $f_{\underline{\Delta}_i}$.  It suffices, by the non-singularity hypothesis on $\Delta$, to show that the expansion of $f_1f_2$ has a nonzero term divisible by $y_\ell^{k_1+k_2}$; but this is clear, so the claim holds.  It follows that there is no cancellation of terms when expanding the product $f_1f_2$, so $\Delta_1 \cdot \Delta_2 = \Delta$ and each $\Delta_i$ is non-singular.  Moreover, for this same reason if either $\Delta_i$ were not minimal, then $\Delta$ would not be minimal, so we obtain a contradiction to the hypothesis that $\Delta$ is minimal and not a product of disjointly supported minimal non-singular complexes.

Finally, we note that in general the monoid $M(X)$ is generated by the classes of a generating set of the ring $\Cox(X)$; since this monoid is minimally generated by its irreducible elements, we see that any generating set for the Cox ring must include a section of each irreducible divisor class.
\end{proof}


\section{Catalogue of divisor classes}

In this section we continue to work over an arbitrary ring $R$, unless otherwise stated, and we introduce various important families of balanceable complexes, with a view towards understanding minimal complexes and irreducible divisor classes (cf., Theorem \ref{thm:CoxGen}).  A basic but important observation is that in a balanceable complex, every face of every simplex must belong to more than one simplex.  This restriction, as we shall see below, means that minimal complexes tend to be triangulations of closed manifolds, or closely related objects.  We start with a simple example.

\begin{example}\label{ex:orthoplex}
A minimal 0-complex is a pair of vertices; the associated divisor class is a degree one boundary divisor, i.e., the strict transform of a hyperplane in $\PP^{n-3}$.  This corresponds to an invariant $\frac{y_i}{z_i} - \frac{y_j}{z_j}$, which upon clearing denominators yields a binomial with class $H - \sum_{I \not\ni i,j}E_I$.  Recall that the orthoplex, or cross-polytope, is the convex hull in $\mathbb{R}^d$ of the $2d$ vectors $\pm e_i$.  For example, the 1-orthoplex is an interval, the 2-orthoplex is a square, and the 3-orthoplex is an octahedron.  All of the facets of the orthoplexes are simplices.  We claim that the $d$-fold product of minimal 0-complexes with disjoint support is a minimal $(d-1)$-complex comprising the $2^d$ facets of the $d$-orthoplex; the associated divisor class is a sum of $d$ degree one boundary divisor classes.  Indeed, the disjoint support hypothesis implies that clearing denominators commutes with taking the product of the $d$ binomials of the form $\frac{y_i}{z_i} - \frac{y_j}{z_j}$, so the simplices of the product complex correspond simply to the terms of this expansion.  This complex is minimal, since removing any simplices would yield a face contained in a single simplex.
\end{example}


\subsection{Degree two divisors}\label{sec:Deg2}

Here we investigate in more detail the case of 1-complexes and their corresponding quadric divisor classes.  For simplicity we work over an arbitrary field $k$.  Recall that 1-complexes are simply graphs without multiedges.  This case is special because one can restrict attention entirely to non-singular complexes, or in other words, loop-free graphs:

\begin{proposition}\label{prop:loopfree}
If a balanceable 1-complex $\Delta$ is singular, then its class $D_\Delta$ is a sum of boundary divisor classes; in particular, it is reducible.
\end{proposition}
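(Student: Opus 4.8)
The plan is to analyze the structure of a singular balanceable 1-complex $\Delta$ directly, using the balancing condition together with the correspondence of Theorem~\ref{thm:CoxCom} to recognize the associated divisor class as a sum of boundary classes. A singular 1-complex contains at least one loop $\{i,i\}$. First I would recall that the boundary classes of $\M_{0,n}$ are exactly the $x_I$ together with the binomials arising from clearing denominators of the differences $\frac{y_i}{z_i}-\frac{y_j}{z_j}$; more generally, a class is a sum of boundary classes precisely when it can be expressed through the degree-one invariants $\frac{y_i}{z_i}-\frac{y_j}{z_j}$ (cf.\ the discussion in \S\ref{sec:CoxInvar} and Example~\ref{ex:orthoplex}). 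So the goal reduces to showing that the Laurent polynomial $f_{\underline{\Delta}} = \sum_i w_i \frac{y_{\sigma_i}}{z_{\sigma_i}}$ associated to a balanced singular 1-complex factors, after clearing denominators, as a product of a boundary class with something effective, or directly decomposes into such differences.

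The key computation concerns the loop. Consider a vertex $i$ that carries a loop $\sigma=\{i,i\}$ with weight $w$. The degree-zero balancing condition applied to the face $S=\{i\}$ (remembering the multiset convention from the Remark after Definition~\ref{def:balanced}, so the loop contributes $2w$) says that $2w + \sum_{j:\{i,j\}\in\Delta, j\ne i} w_{\{i,j\}} = 0$. The plan is to exploit this to rewrite the portion of $f_{\underline{\Delta}}$ involving $y_i$. Grouping the terms of $f_{\underline{\Delta}}$ that contain the variable $y_i$, one gets
\[
\frac{y_i}{z_i}\left( w\,\frac{y_i}{z_i} + \sum_{j\ne i} w_{\{i,j\}}\,\frac{y_j}{z_j}\right),
\]
and I would use the degree-one balancing relation at $i$ to replace the constant-coefficient piece, so that this expression becomes a multiple of $\frac{y_i}{z_i}$ times a difference of $\frac{y_\bullet}{z_\bullet}$ terms, i.e.\ a factor that is itself a degree-one invariant (a boundary type factor). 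Concretely, the balancing relation lets one subtract a multiple of the total-weight-zero constant and express everything in terms of differences $\frac{y_i}{z_i}-\frac{y_j}{z_j}$, which are exactly the generators associated to minimal $0$-complexes.

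The cleanest route, I expect, is to argue on the level of divisor classes rather than fight with denominators: apply Lemma~\ref{lem:DelClass} to compute $D_\Delta$ for the singular complex and then exhibit an explicit decomposition $D_\Delta = D_{\Delta'} + H - \sum_{I\not\ni i,\ast}E_I$ (a boundary class coming from a difference at $i$) where $\Delta'$ is an honest non-singular $0$-complex or a lower-degree balanceable complex, iterating until all loops are removed. Since each loop contributes a factor of $y_i$, and the multiplicity-two appearance of $i$ raises the coefficient of $E_i$-type exceptional classes in the formula of Lemma~\ref{lem:DelClass} in exactly the way a boundary divisor through the coordinate point $q_i$ would, the difference $D_\Delta$ minus such a boundary class should again be of the form $D_{\Delta'}$ for a balanceable complex with one fewer loop. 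Inducting on the number of loops then finishes the argument.

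The main obstacle will be the bookkeeping of exceptional-divisor coefficients: I must verify that subtracting the boundary class associated to the loop at $i$ does not overshoot, i.e.\ that the resulting class still has the form $D_{\Delta'}$ with $\Delta'$ a genuine (balanceable) $1$-complex and with all exceptional coefficients remaining in the valid range dictated by Lemma~\ref{lem:DelClass}. Equivalently, on the polynomial side, the delicate point is controlling cancellation when factoring out the degree-one invariant at $i$ (the phenomenon flagged in Remark~\ref{rem:sum}), and confirming that the quotient still clears denominators to an honest element of $\Cox(\M_{0,n})$. Handling loops at several distinct vertices simultaneously—or a vertex whose loop is its only incident simplex, which the balancing condition actually forbids since a face must lie in more than one simplex—requires care, but the inductive reduction should isolate one loop at a time and reduce everything to the $0$-complex computation of Example~\ref{ex:orthoplex}.
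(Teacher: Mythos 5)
Your proposal has a genuine gap, and its polynomial-level core is incorrect. The loop computation does not do what you claim: grouping the terms of $f_{\underline{\Delta}}$ containing $y_i$ gives $\frac{y_i}{z_i}\bigl(w\frac{y_i}{z_i}+\sum_{j\ne i}w_{\{i,j\}}\frac{y_j}{z_j}\bigr)$, and the degree-one balancing relation at $i$ reads $2w+\sum_{j\ne i}w_{\{i,j\}}=0$; hence the coefficients of the inner linear form sum to $w+\sum_{j\ne i}w_{\{i,j\}}=-w\neq 0$, so that form is \emph{not} a combination of the differences $\frac{y_a}{z_a}-\frac{y_b}{z_b}$, and no substitution using the balancing relation can make it one (any rewriting just reproduces a nonzero $\frac{y_i^2}{z_i^2}$ term). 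More fundamentally, any route through factoring the section is doomed, because a balanced singular 1-complex can have an irreducible section: writing $u_a=\frac{y_a}{z_a}$, the complex $\{\{1,1\},\{1,2\},\{2,2\},\{3,5\},\{3,6\},\{4,5\},\{4,6\}\}$ with weights $(1,-2,1,1,-1,-1,1)$ is balanced (in characteristic $\ne 2$) and corresponds to $(u_1-u_2)^2+(u_3-u_4)(u_5-u_6)$, a quadric of rank three, which is irreducible. The proposition asserts only that the \emph{class} $D_\Delta$ is a sum of boundary classes, not that the divisor decomposes into boundary divisors; your opening reduction (``the goal reduces to showing that [the polynomial] factors\ldots or directly decomposes into such differences'') conflates these two statements, and the first is false in general.

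Your fallback class-level plan points at the right kind of statement, but what you defer as ``the main obstacle'' is precisely the entire content of the proposition: you must exhibit degree-one boundary classes $H-\sum_{I\not\ni j,k}E_I$ whose sum stays coefficientwise below $D_\Delta$, so that the leftover is a nonnegative combination of exceptional classes. This requires a real argument---one must choose the two pairs $\{j,k\}$ so that every index set $I$ avoiding the loop vertices and containing no edge of $\Delta$ misses at least one pair, and showing a valid choice always exists involves casework (a second loop, or an edge disjoint from $i$, etc.). The paper avoids all of this with a single structural observation: by Lemma \ref{lem:DelClass}, the loop at $i$ forces the coefficient of $E_I$ in $D_\Delta$ to vanish for \emph{every} $I\ni i$, so $D_\Delta$ can be identified with an effective class on the toric blow-up of $\PP^{n-3}$ along the coordinate subspaces spanned by the remaining $n-2$ points; since the effective cone of a toric variety is generated by its torus-invariant divisors, $D_\Delta$ is automatically a nonnegative combination of exceptional classes and two strict transforms of hyperplanes, all of which are boundary classes on $\M_{0,n}$. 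If you wish to salvage your inductive approach, the combinatorial lemma about choosing the two pairs is what you must actually prove; the toric argument is how the paper gets it for free.
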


\begin{proof}
A singular 1-complex has a simplex of the form $\sigma = \{i,i\}$, so by Lemma \ref{lem:DelClass} the class $D_\Delta$ involves only $E_I$ with $i\notin I$.  Thus we can identify $D_\Delta$ with an effective divisor class on the toric variety $X_{n-1}$ obtained by blowing up only coordinate points in $\PP^{n-3}$.  The effective cone of this toric variety is generated by exceptional divisors and strict transforms of hyperplanes, so $D_\Delta$ is a sum of exceptional divisors and two strict transforms of hyperplanes.  Returning to $\M_{0,n}$, this shows that $D_\Delta$ is a sum of degree zero boundary divisors and two degree one boundary divisors.
\end{proof}

\begin{remark}
The above proof shows that for a balanceable $(d-1)$-complex $\Delta$ for any $d \ge 2$, the class $D_\Delta$ is a sum of exceptional boundary divisors and $d$ degree one boundary divisors if $\Delta$ contains a simplex $\sigma$ with $|\Supp(\sigma)| = 1$.  Of course, for $d \ge 3$ there are many singular complexes that do not contain such maximally singular simplices, and for these complexes $D_\Delta$ may in fact be irreducible (cf., \S\ref{sec:CTasDel}).
\end{remark}

Already with non-singular 1-complexes we find some intriguing behavior.

\begin{theorem}\label{thm:CharTwo}
The set of effective classes in $\Pic ( \M_{0,n} )$ depends on $\Char{k}$, for all $n \geq 7$.
\end{theorem}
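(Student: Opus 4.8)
The plan is to exhibit, for each $n \ge 7$, a single divisor class in $\Pic(\M_{0,n})$ whose effectivity toggles with $\Char k$; this alone shows that the set of effective classes is characteristic-dependent. In light of our simplicial dictionary the natural candidate is $D_\Delta$, where $\Delta$ is the non-singular $1$-complex on $[n-1]$ consisting of the six edges of two vertex-disjoint triangles, say on $\{1,2,3\}$ and $\{4,5,6\}$. Recall that for a $1$-complex (degree-two element) the relevant balancing conditions coming from the proof of Theorem \ref{thm:CoxCom} are the degree-$0$ condition (total weight zero) and the degree-$1$ condition (at each vertex the incident edge weights sum to zero). Since $\Delta$ is a $1$-complex we have $d=2 \le n-4$ for all $n \ge 6$, and $\Delta$ is loop-free, so the numerical and non-singularity hypotheses of Proposition \ref{prop:EffIff} are met.

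The heart of the argument is to verify the remaining hypothesis of Proposition \ref{prop:EffIff}, namely that no proper subcomplex of $\Delta$ is balanceable, and crucially this must hold \emph{in every characteristic} so that the proposition applies uniformly. First I would record the elementary observation that in any balanceable $1$-complex every support vertex meets at least two edges, since the degree-$1$ condition at a univalent vertex would force the weight of its unique edge to vanish. As the two triangles are disjoint, the only nonempty subgraphs of $\Delta$ in which every support vertex is at least bivalent are a single triangle and $\Delta$ itself. It therefore suffices to show a single triangle is never balanceable over a field: writing $w_1,w_2,w_3$ for the edge weights, the three vertex conditions $w_1+w_2 = w_2+w_3 = w_1+w_3 = 0$ force $w_2=w_3=-w_1$ together with $2w_1=0$; in characteristic $\ne 2$ this already gives $w_1=0$, while in characteristic $2$ it gives $w_1=w_2=w_3$, whereupon the degree-$0$ condition $w_1+w_2+w_3 = 3w_1 = w_1$ cannot vanish. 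Hence no triangle, and thus no proper subcomplex of $\Delta$, is balanceable.

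With the hypotheses of Proposition \ref{prop:EffIff} in hand, $D_\Delta$ is effective if and only if $\Delta$ itself is balanceable, and it remains only to determine when this occurs. In characteristic $2$ the uniform weighting $w_i=1$ on all six edges is balanced: at each bivalent vertex the incident weights sum to $1+1=0$, and the total $6\cdot 1 = 0$. In characteristic $\ne 2$ the triangle computation above shows the degree-$1$ conditions already fail on either triangle in isolation, so $\Delta$ admits no balancing. Therefore $D_\Delta$ is effective exactly when $\Char k = 2$. Since $D_\Delta$ is defined by the purely combinatorial formula of Lemma \ref{lem:DelClass} and $\Pic(\M_{0,n})$ is canonically identified under base change, this is a single fixed class lying in the effective monoid over a field of characteristic $2$ but not over a field of any other characteristic, establishing the claim for all $n \ge 7$.

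I expect the only genuine friction to be the uniform verification of the subcomplex hypothesis of Proposition \ref{prop:EffIff}: one must check non-balanceability of the sub-triangles in \emph{all} characteristics at once, rather than merely in the characteristic of interest, and it is precisely this that licenses the harder ``only if'' direction---that $D_\Delta$ is truly non-effective, and not just unbalanceable, when $\Char k \ne 2$. The balanceability computations themselves are routine linear algebra over $k$.
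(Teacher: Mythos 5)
Your proposal is correct and follows essentially the same route as the paper: the same example (the $1$-complex of two vertex-disjoint triangles on $\{1,\ldots,6\}$) combined with Proposition \ref{prop:EffIff} to convert balanceability into effectivity. The only difference is that you spell out the balanceability computations and the non-balanceability of proper subcomplexes in all characteristics, which the paper's proof leaves as ``easily seen.''
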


\begin{proof}
Consider a 1-complex $\Delta$ on $\{1,\ldots,6\}$ given by the edges of a disjoint union of two triangles.  This is easily seen to be balanceable if and only if $\Char{k} = 2$ (or more generally, for an arbitrary base ring $R$, if and only if $2\in R$ is a zero-divisor).  Since any subcomplex is not balanceable, the result follows from Proposition \ref{prop:EffIff}.
\end{proof}

\begin{remark}
The preceding result does not imply that the effective cone $\Eff(\M_{0,n})$ depends on $\Char{k}$.  Indeed, although $D_\Delta$ is effective if and only if $\Char{k} = 2$ when $\Delta$ is a disjoint union of two triangles, we shall show in \S\ref{subsec:twotri} that $2D_\Delta$ is effective over any field.
\end{remark}

Graph-theoretic cycles, such as the triangles used in the proof of Theorem \ref{thm:CharTwo}, play an important role.  We call a cycle \emph{even} or \emph{odd}, respectively, if it has an even or odd number of edges.  A loop $\sigma = \{i,i\}$ is a cycle with one edge, so it is odd.

\begin{proposition}
\label{Prop:MinimalGraphs}
The minimal 1-complexes over $k$ are the following (see Figure \ref{fig:minimal}):
\begin{enumerate}
\item  an even cycle;
\item  a union of two odd cycles, meeting in a single vertex, and
\item[(3a)]  a disjoint union of two odd cycles, if $\Char{k} = 2$.
\item[(3b)] a union of two odd cycles, connected by a chain of one or more edges, if $\Char{k} \ne 2$.
\end{enumerate}
\end{proposition}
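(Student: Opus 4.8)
The plan is to translate minimality into a single linear-algebra condition on the space of balancing weights and then read off the graph-theoretic possibilities from the rank of an incidence matrix. For a $1$-complex $\Delta$ with edge set $E$, let $W(\Delta)\subseteq k^E$ denote the space of weightings (now \emph{allowing} zeros) satisfying the balancing conditions: the degree-$1$ conditions $\sum_{e\ni v}w_e=0$ at each vertex $v$, together with the degree-$0$ condition $\sum_e w_e=0$. By Proposition \ref{prop:chardep} (with $d=\ell=1$, so $\binom{2}{1}=2$) the total-weight condition is automatic when $\Char k\ne 2$, whereas in characteristic $2$ it is genuinely extra; this is the source of the split between cases (3a) and (3b).

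First I would record the key reduction: $\Delta$ is minimal if and only if $\dim_k W(\Delta)=1$ and a generator of $W(\Delta)$ has no zero coordinate. Indeed, a balanceable subcomplex $\Delta'\subseteq\Delta$ is exactly the support of a nonzero element of $W(\Delta)$, obtained by extending a balancing of $\Delta'$ by zero. If $\dim W\ge 2$ one can cancel a coordinate to produce an element of proper support, so $\Delta$ has a proper balanceable subcomplex; if $\dim W=1$ but the generator vanishes somewhere, its support is a proper balanceable subcomplex and moreover $\Delta$ itself fails to be balanceable. The converse---that a nowhere-zero generator spanning a line forces minimality---is immediate, and the $\dim W\le 1$ half also recovers Proposition \ref{prop:MinCyc}. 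A preliminary observation feeding into this is that any balanceable complex has minimum degree at least $2$, since a degree-one vertex forces the weight of its unique incident edge to vanish.

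Next I would compute $\dim_k W(\Delta)$ from the (unsigned) vertex--edge incidence matrix $M$. When $\Char k\ne 2$ one has $W=\ker M$ and $\operatorname{rank}M=|V|-b$, where $b$ is the number of bipartite connected components, so $\dim W=|E|-|V|+b$. When $\Char k=2$ the matrix $M$ coincides with the signed incidence matrix, $\ker M$ is the cycle space (of dimension equal to the first Betti number $\beta=|E|-|V|+c$), and $W$ is the kernel of the total-weight functional restricted to this cycle space; since a simple cycle of length $\ell$ contributes $\ell$ to the total, odd cycles fail the total condition while even cycles satisfy it. In either case minimality forces $\dim W=1$, which together with the minimum-degree-$2$ constraint restricts the support graph to first Betti number at most $2$: a single cycle, or one of the three connected $\beta=2$ shapes (two cycles meeting at a vertex, two cycles joined by a path, or a theta graph), or---only in characteristic $2$, where the decoupling above allows disconnected minimal complexes---a disjoint union of two cycles.

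Finally I would run the parity and nowhere-zero bookkeeping on each shape. A single cycle gives case (1): it is balanceable precisely when even, as an odd cycle forces $2w=0$ at each vertex when $\Char k\ne 2$ and fails the total condition when $\Char k=2$. For $\beta=2$, any configuration containing an even cycle has that even cycle as a proper balanceable subcomplex, destroying minimality; this excludes theta graphs outright (two of the three constituent paths share a parity, producing an even cycle) and forces both cycles to be odd in the remaining shapes. Two odd cycles meeting at a vertex are then balanceable with a nowhere-zero generator in every characteristic, giving case (2). For two odd cycles joined by a path, the computation along the bridge edges shows a nowhere-zero generator exists exactly when $\Char k\ne 2$---in characteristic $2$ every cycle-space element vanishes on bridges, so the path edges can never carry nonzero weight---yielding (3b); dually, two disjoint odd cycles are balanceable only in characteristic $2$, where matching the two cycle weights through the total condition produces the nowhere-zero generator of (3a). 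The main obstacle is precisely this last step: assembling the incidence-rank computation, the $\beta\le 2$ structural enumeration, and the characteristic-dependent parity analysis into a clean case division, and in particular verifying that the three exclusions (even-cycle-containing configurations, bridges in characteristic $2$, and disjoint cycles away from characteristic $2$) account for everything not on the list.
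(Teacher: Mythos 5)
Your proposal is correct, and it takes a genuinely different route from the paper's. The paper argues directly and combinatorially: a balanceable graph has no valence-one vertex, hence contains a cycle; an even cycle is balanceable and an odd one is not, so in a minimal graph with more than one cycle all cycles are odd; two odd cycles sharing an edge would produce an even cycle, so distinct cycles are edge-disjoint; and the (3a)/(3b) dichotomy comes from the fact that for $\Char(k) \ne 2$ every connected component of a balanced graph is balanced, whereas in characteristic $2$ a disjoint pair of odd cycles balances. You instead convert minimality into the statement that the solution space $W(\Delta)$ of the balancing equations is a line spanned by a nowhere-vanishing vector, and you compute $\dim W$ from the rank of the unsigned incidence matrix (for $\Char(k) \ne 2$) or from the cycle space cut down by the total-weight functional (for $\Char(k) = 2$). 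This costs more machinery but buys completeness of the enumeration: for example, a ``flower'' of three odd cycles through a common vertex is balanceable with nowhere-zero weights over any field with $\Char(k) \ne 2$, yet it is excluded at once by your count $\dim W = \beta - 1 = 2$, whereas the paper's proof never explicitly rules out configurations with three or more pairwise edge-disjoint odd cycles. Your reduction also recovers Proposition \ref{prop:MinCyc} as a byproduct and locates the characteristic dependence precisely in Proposition \ref{prop:chardep}.

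Two repairs are needed, both minor. First, your intermediate claim that $\dim W = 1$ together with minimum degree $2$ forces $\beta \le 2$ is false as stated when $\Char(k) \ne 2$: the disjoint union of a wedge of two odd cycles with a third odd cycle has minimum degree $2$, first Betti number $3$, and $\dim W = 1$, since an odd cycle contributes nothing to the kernel of the unsigned incidence matrix in odd characteristic. The fix lies inside your own reduction: a nowhere-vanishing generator restricts to a nonzero kernel element on each connected component, so for $\Char(k) \ne 2$ there can be only one component, and then $\dim W$ equals $\beta$ if the graph is bipartite and $\beta - 1$ otherwise, so $\dim W = 1$ gives $\beta \le 2$; in characteristic $2$ the inequality $\dim W \ge \beta - 1$ gives $\beta \le 2$ directly, and the nowhere-vanishing condition bounds the number of components. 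So the Betti bound needs the nowhere-zero hypothesis, not just the dimension count. Second, since the paper counts loops as odd cycles, your incidence matrix must record a loop at $v$ as the entry $2$ in row $v$ (hence a zero column in characteristic $2$), in keeping with the multiset convention of Definition \ref{def:balanced}; with that convention your rank and cycle-space computations go through unchanged and loops are handled uniformly as odd cycles.
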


\begin{figure}\begin{center}
\scalebox{0.85}{\input{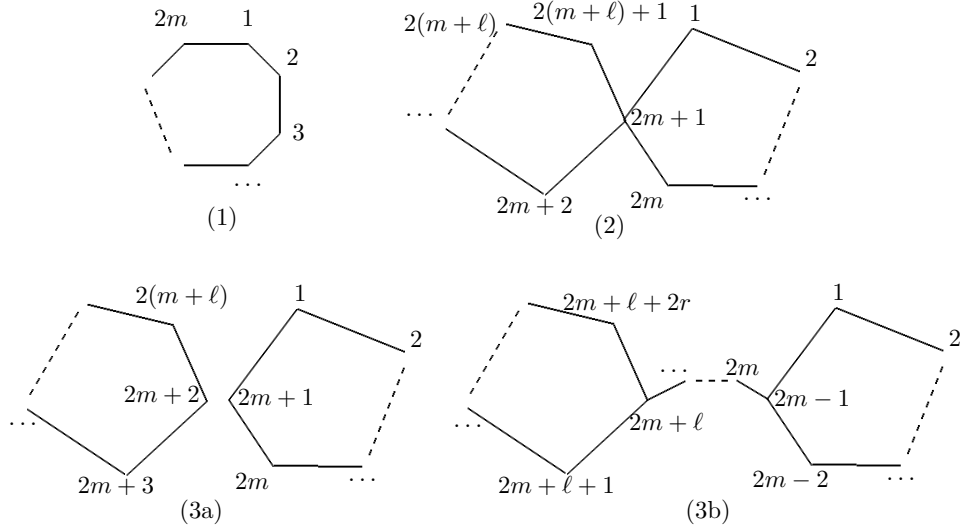}}
\caption{The minimal 1-complexes in Proposition \ref{Prop:MinimalGraphs}.}
\label{fig:minimal}
\end{center}\end{figure}

\begin{proof}
If a graph contains no cycles, then it contains a valence-one vertex and hence cannot be balanced, so every balanceable graph contains a cycle.  Note that an even cycle is balanceable whereas an odd cycle is not; thus, if a minimal graph contains more than one cycle, they must all be odd.  If there are two odd cycles that share one or more edges, then by deleting the edges they have in common, we obtain a disjoint union of cycles whose total number of edges is even.  It follows that the graph contains, and is therefore equal to, a complex as in (1), (2), (3a), or (3b).  Thus, in a minimal graph with more than one cycle, no two cycles can share an edge.  Note that if $\Char{k} = 2$ then a disjoint union of two odd cycles is balanceable, but if a balanced graph is disconnected when $\Char{k} \ne 2$, then each of the connected components is balanced.  In this case, the two cycles must be connected by a chain of edges.
\end{proof}

\begin{theorem}\label{thm:GenKV}
A degree two divisor class is irreducible in $M(\M_{0,n})$ if and only if it is of the form $D_\Delta$ where $\Delta$ is one of the loop-free graphs described in Proposition \ref{Prop:MinimalGraphs}, except for the square.
\end{theorem}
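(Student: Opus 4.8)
The plan is to combine the structural classification of minimal 1-complexes (Proposition~\ref{Prop:MinimalGraphs}) with the irreducibility criterion of Theorem~\ref{thm:CoxGen}, handling carefully the single exceptional case of the square. First I would observe that a degree two divisor class $D$ with no removable exceptional component is, by Corollary~\ref{cor:AllClass} and Proposition~\ref{prop:loopfree}, of the form $D_\Delta$ for a non-singular balanceable 1-complex $\Delta$ (since $d=2 \le n-4$ holds for $n \ge 6$, and any singular balanceable 1-complex gives a reducible boundary-type class). If $D_\Delta$ is to be irreducible it certainly cannot have an exceptional divisor subtracted off, so $\Delta$ must be minimal: any proper balanceable subcomplex $\Delta' \subsetneq \Delta$ would give $D_\Delta = D_{\Delta'} + \sum E_I$ by Lemma~\ref{lem:DelClass}, a nontrivial decomposition. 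Hence the candidates for $\Delta$ are exactly the minimal graphs listed in Proposition~\ref{Prop:MinimalGraphs}.

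The forward direction then reduces to applying Theorem~\ref{thm:CoxGen}: a non-singular minimal 1-complex yields an irreducible class provided it does not decompose as a product $\Delta_1 \cdot \Delta_2$ of minimal non-singular complexes on disjoint vertex sets. Since a product $\Delta_1 \cdot \Delta_2$ of two $0$-complexes (pairs of vertices) is precisely the $1$-complex whose simplices are the four edges of a square (cf.\ Example~\ref{ex:orthoplex}, the $2$-orthoplex), the \emph{only} minimal 1-complex that so decomposes is the square. Thus every minimal graph on the Proposition~\ref{Prop:MinimalGraphs} list \emph{except} the square satisfies the hypotheses of Theorem~\ref{thm:CoxGen} and yields an irreducible $D_\Delta$. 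I would verify that none of the genuinely one-dimensional minimal graphs (even cycles of length $\ge 6$, wedges of two odd cycles, disjoint or chain-connected pairs of odd cycles) admits such a disjoint product decomposition, which is immediate because a nontrivial product of lower-dimensional complexes must be a $0$-complex times a $0$-complex in the degree-two case, forcing the square.

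For the converse I must show that \emph{every} irreducible degree two class arises this way. Given an irreducible $D$ of degree two, I use Corollary~\ref{cor:AllClass}: irreducibility forces $D - E_I$ non-effective for all $I$ (otherwise $D$ decomposes), so $D = D_\Delta$ for a balanceable $\Delta$, which by Proposition~\ref{prop:loopfree} may be taken non-singular. Minimality of $\Delta$ follows as above, and the square is excluded because its class decomposes as a sum of two degree-one boundary classes (the class $2H - \sum_{i=1}^4 E_i$ factors, as in Example~\ref{ex:orthoplex}), contradicting irreducibility. Hence $\Delta$ is one of the listed minimal graphs other than the square.

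The main obstacle I anticipate is the bookkeeping around the square and, more generally, making the disjoint-product analysis airtight: I must confirm that the square is the unique minimal 1-complex expressible as a disjoint product of minimal non-singular subcomplexes, and separately that its class is genuinely reducible (a sum of two boundary divisor classes, hence not irreducible). Both points are essentially the content of Example~\ref{ex:orthoplex} specialized to $d=2$, so the argument is clean, but care is needed to ensure the product structure detected combinatorially matches the factorization $f = f_1 f_2$ of the associated Cox-ring sections without hidden cancellation — this is exactly what the non-singularity and disjoint-support hypotheses in the proof of Theorem~\ref{thm:CoxGen} guarantee.
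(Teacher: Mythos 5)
Your proposal is correct and follows essentially the same route as the paper's own proof: the forward direction via Theorem~\ref{thm:CoxGen} together with the observation (Example~\ref{ex:orthoplex}) that the only disjoint product of minimal non-singular complexes in degree two is the square, and the converse via Corollary~\ref{cor:AllClass}, non-singularity from Proposition~\ref{prop:loopfree}, minimality from the formula in Lemma~\ref{lem:DelClass} (using $2 \le n-4$), and exclusion of the square because its class is a sum of boundary classes. The only difference is organizational --- you state the candidate-narrowing argument before the forward direction --- which changes nothing of substance.
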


\begin{proof}
Suppose $\Delta$ is one of the graphs mentioned in the hypothesis.  Note that this forces $n \ge 6$.  Since $\Delta$ is non-singular and minimal, and $2 \le n-4$, to show that $D_\Delta$ is irreducible it suffices by Theorem \ref{thm:CoxGen} to show that $\Delta$ is not a product of disjointly supported non-singular minimal 0-complexes.  But such a product is always a square (cf., Example \ref{ex:orthoplex}), so this indeed holds.

Conversely, let $D$ be an irreducible degree two divisor class.  Every class of the form $D-E_I$ is not effective, by irreducibility, so Corollary \ref{cor:AllClass} implies that $D = D_\Delta$ for some balanceable 1-complex $\Delta$.  Proposition \ref{prop:loopfree} implies that $\Delta$ is non-singular, and it cannot be a square since the divisor class corresponding to a square is reducible (cf., Example \ref{ex:orthoplex}), so it only remains to show that $\Delta$ is minimal.  If there were a balanceable subcomplex $\Delta' \subsetneq \Delta$, then it must be non-singular since $\Delta$ is, so again using that $2 \le n-4$ we see from the formula in Lemma \ref{lem:DelClass} that there is a non-trivial expression $D_\Delta = D_{\Delta'} + \sum E_I$, thus contradicting the irreducibility hypothesis.
\end{proof}

By Corollary \ref{cor:UniSec}, the divisor class associated to each graph in Theorem \ref{thm:GenKV} has a unique section, up to scalar.  Since these classes are irreducible, any set of generators for $\Cox(\M_{0,n})$ must include these sections.  However, not all of these classes are needed to generate the effective cone $\Eff(\M_{0,n})$; indeed, some of these classes span extremal rays (as we shall see in \S\ref{subsec:oct}), whereas others span non-extremal rays, as the following example illustrates.

\begin{example}\label{ex:2Dboundary}
Let $G$ be the 1-complex in Proposition \ref{Prop:MinimalGraphs} of type (3b) shown in Figure \ref{fig:tribritri}.  We claim  $2D_G$ is an effective sum of multiple distinct boundary classes.  Label the vertices as in Figure \ref{fig:tribritri}.  By clearing denominators of \[(\frac{y_1}{z_1} - \frac{y_4}{z_4})(\frac{y_2}{z_2} - \frac{y_4}{z_4})(\frac{y_3}{z_3} - \frac{y_5}{z_5})(\frac{y_3}{z_3} - \frac{y_6}{z_6}),\] we obtain a class that is the sum of four degree one boundary divisors.  A straightforward calculation shows $2D_G$ is this class plus exceptional boundary divisors.
\end{example}

\begin{figure}\begin{center}
\scalebox{0.9}{\input{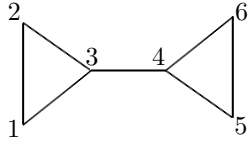}}
\caption{The balanceable graph in Example \ref{ex:2Dboundary}.}
\label{fig:tribritri}
\end{center}\end{figure}

This implies the following:

\begin{corollary}\label{Cor:NotJustExtremal}
If $n \ge 7$ and $\Char(k)\ne 2$, then there exist elements of $\Cox(\M_{0,n})$ outside the subring generated by sections of the extremal rays of $\Eff(\M_{0,n})$.
\end{corollary}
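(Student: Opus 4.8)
The plan is to produce a single explicit element of $\Cox(\M_{0,n})$ and show it cannot lie in the subring generated by extremal-ray sections, by playing off \emph{irreducibility in the monoid} against \emph{non-extremality of the ray}. First I would fix $\Delta$ to be the type (3b) graph of Figure \ref{fig:tribritri}: two triangles joined by a single bridging edge, on the six vertices $\{1,\ldots,6\}\subseteq [n-1]$, which is available once $n\ge 7$. Since $\Char(k)\ne 2$, Proposition \ref{Prop:MinimalGraphs}(3b) shows $\Delta$ is a minimal balanceable $1$-complex; in particular $D_\Delta$ is effective, and as $\Delta$ is non-singular, minimal, not a square, and $2\le n-4$, Theorem \ref{thm:GenKV} shows $D_\Delta$ is irreducible in $M(\M_{0,n})$. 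By Corollary \ref{cor:UniSec} the space $\Cox(\M_{0,n})_{D_\Delta}$ is one-dimensional, spanned by the balancing section $f$ of $\Delta$.

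Next I would record the two opposing facts about the ray $\mathbb{R}_{\ge 0}D_\Delta$. Irreducibility gives the first: $D_\Delta$ admits no expression $D_\Delta = C_1+\cdots+C_t$ with all $C_i$ nonzero and effective unless $t=1$ and $C_1=D_\Delta$. For the second, I would invoke Example \ref{ex:2Dboundary}, where $2D_\Delta$ is written as a sum of four degree-one boundary classes together with some exceptional classes. Each summand is effective of degree $0$ or $1$, hence none is a positive multiple of the degree-two class $D_\Delta$; thus $2D_\Delta$, and so the ray of $D_\Delta$, lies in the interior of the cone spanned by boundary classes and is \emph{not} extremal in $\Eff(\M_{0,n})$. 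The degree grading is exactly what turns the decomposition of Example \ref{ex:2Dboundary} into a genuine certificate of non-extremality rather than mere reducibility.

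With these in hand, let $S\subseteq \Cox(\M_{0,n})$ be the $\Pic(\M_{0,n})$-graded subring generated by the sections of all classes spanning extremal rays of $\Eff(\M_{0,n})$. Every homogeneous element of $S$ is a $k$-linear combination of monomials $g_1\cdots g_t$ with each $g_i$ a section of an extremal-ray class $C_i$, so its class is a sum $C_1+\cdots+C_t$ of extremal-ray classes. I would then argue $S_{D_\Delta}=0$: were some such monomial to have class $D_\Delta$, irreducibility (together with sharpness of $M(\M_{0,n})$, which makes each $C_i$ nonzero) would force $t=1$ and $C_1=D_\Delta$ on an extremal ray, contradicting non-extremality. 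Hence $S_{D_\Delta}=0$, whereas $\Cox(\M_{0,n})_{D_\Delta}=k\cdot f\ne 0$, so $f\notin S$, proving the corollary.

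The hard part is conceptual rather than computational, and lies entirely in the clean separation of the two regimes in the second and third paragraphs: ensuring that the boundary decomposition of $2D_\Delta$ certifies non-extremality (handled by the degree grading, since no positive-degree boundary summand can be proportional to $D_\Delta$) and that non-extremality combined with monoid-irreducibility forces the vanishing $S_{D_\Delta}=0$ (a short argument comparing a putative extremal-ray decomposition against irreducibility). Everything else is bookkeeping against already-established results: Proposition \ref{Prop:MinimalGraphs}, Theorem \ref{thm:GenKV}, Example \ref{ex:2Dboundary}, and Corollary \ref{cor:UniSec}.
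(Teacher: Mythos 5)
Your proposal is correct and takes essentially the same route as the paper: the type (3b) graph of Figure \ref{fig:tribritri}, monoid-irreducibility of $D_\Delta$ via Theorem \ref{thm:GenKV} (with its unique section from Corollary \ref{cor:UniSec}), and the boundary decomposition of $2D_\Delta$ from Example \ref{ex:2Dboundary} certifying non-extremality, which together exclude that section from the extremal-ray subring. One small tightening: for the degree-one summands, the degree count alone does not rule out $B=\tfrac{1}{2}D_\Delta$ (a positive multiple $\lambda D_\Delta$ has degree $2\lambda$, so $\lambda=\tfrac{1}{2}$ is conceivable); close this either by irreducibility, since $2B=D_\Delta$ in $\Pic(\M_{0,n})$ would be a forbidden decomposition, or by noting the four degree-one boundary classes are distinct and hence cannot all lie on a single ray.
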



\subsection{Degree three divisors}\label{sec:Deg3}

We now return to a general base ring $R$ and begin to investigate balanceable 2-complexes.  A complete classification of minimal complexes and irreducible divisor classes, as we provided for 1-complexes, seems combinatorially out of reach.  Indeed, as we illustrate in this section through a collection of examples, there is a rich and elegant geometry, often quite classical yet intricate, underlying minimality of 2-complexes.  We saw in \S\ref{sec:Deg2} that minimal 1-complexes can be viewed as triangulations of a circle or pair of circles; here the key construction appears to be triangulations of the sphere (or other closed surfaces) and ways to attach these.

\subsubsection{Bipyramids}

We saw in Theorem \ref{thm:CoxGen} that under suitable hypotheses, the divisor class associated to a non-singular minimal complex is irreducible \emph{except} when that complex is a product of disjointly supported non-singular minimal complexes.  Thus, it is important to identify such products.  For a 2-complex, the only decomposition into a product is given by a 1-complex and a 0-complex.  But minimal 0-complexes are simply pairs of vertices, and we classified minimal 1-complexes in Proposition \ref{Prop:MinimalGraphs}.  The product of a 1-complex $\Delta$ with a disjointly supported minimal 0-complex is a discrete analogoue of the suspension construction in topology.  For instance, if $\Delta$ is an even cycle, say a $(2m)$-gon, we obtain the faces of a $(2m)$-gonal bipyramid.  If $\Delta$ is the disjoint union of an $m_1$-gon and an $m_2$-gon, then we get the faces of $m_i$-gonal bipyramids attached at their north and south poles.

\subsubsection{Cycle of tetrahedra}\label{subsec:CycTet}

Consider a cycle of $m$ tetrahedra attached as follows (see Figure \ref{fig:TetCyc}): \[ \{1,2,3,4\},\{3,4,5,6\},\ldots,\{2m-1,2m,1,2\}.\]  We claim that the collection of all faces of these tetrahedra forms a balanceable, and in fact minimal, 2-complex.  Indeed, if for each tetrahedron $\{a,b,c,d\}$ we weight the faces $w_{abc} = w_{abd} = 1$ and $w_{acd} = w_{bcd} = -1$, then the balancing conditions are easily seen to hold; minimality follows from the observation that removing any triangles results in an edge contained in only one triangle.

\begin{figure}\begin{center}
\scalebox{1.0}{\input{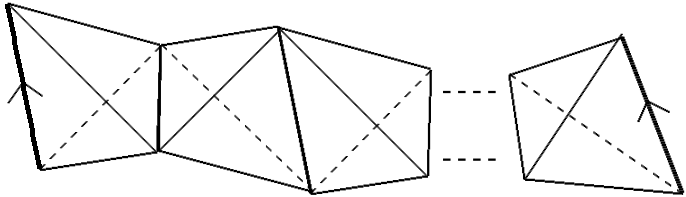}}
\caption{A cycle of tetrehedra.}
\label{fig:TetCyc}
\end{center}\end{figure}

\subsubsection{Triangulated octahedron}

We saw above that the faces of an octahedron form a minimal complex, but the associated divisor class is uninteresting since the octahedron is the suspension of a square, so this complex decomposes into a product of minimal 0-complexes.  However, we can modify this 2-complex to produce one that is minimal yet no longer decomposes into such a product.  For instance, the triangulation of the faces of the octahedron pictured in Figure \ref{fig:Octahedron} is readily seen to be balanced with weights in $\{\pm 1\}$.  Minimality is clear since, as usual, removing triangles yields edges contained in a single triangle.  Since this non-singular complex has $18$ vertices, we therefore obtain by Theorem \ref{thm:CoxGen} an irreducible divisor class, hence Cox ring generator, over any field, for $\M_{0,n}$ with $n \ge 19$.  Indeed, since this 2-complex is not the suspension of a graph it does not decompose into a product of disjointly supported non-singular minimal complexes.

\begin{figure}
\begin{center}
\includegraphics[scale=0.4]{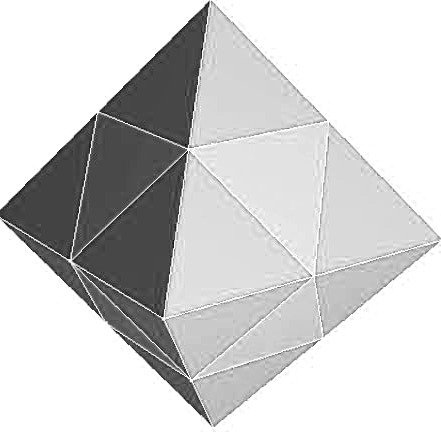}
\end{center}
\caption{Triangulated surface of an octahedron.}
\label{fig:Octahedron}
\end{figure}

\subsubsection{Triangulated cube}

By triangulating the six faces of a cube into four triangles each, as depicted in Figure \ref{fig:Cube}, we obtain a minimal 2-complex that can be balanced with weights in $\{\pm 1\}$ and which is not the product of minimal complexes.  The analysis is quite similar to the triangulated octahedron described above.  Here we obtain an irreducible divisor class on $\M_{0,n}$ for all $n \ge 15$.

\begin{figure}
\begin{center}
\includegraphics[scale=1.0]{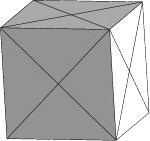}
\end{center}
\caption{Triangulated surface of a cube.}
\label{fig:Cube}
\end{figure}

\subsubsection{Triangulated truncated octahedron}

The truncated octahedron has 8 hexagonal faces and 6 square faces (see Figure \ref{fig:Truncatedoctahedron}).  By subdividing each square face into 4 triangles as before, and each hexagonal face into 6 triangles meeting at a central vertex, we again obtain a minimal complex with irreducible divisor class, now defined on $\M_{0,n}$ for $n \ge 39$.

\begin{figure}
\begin{center}
\includegraphics[scale=0.25]{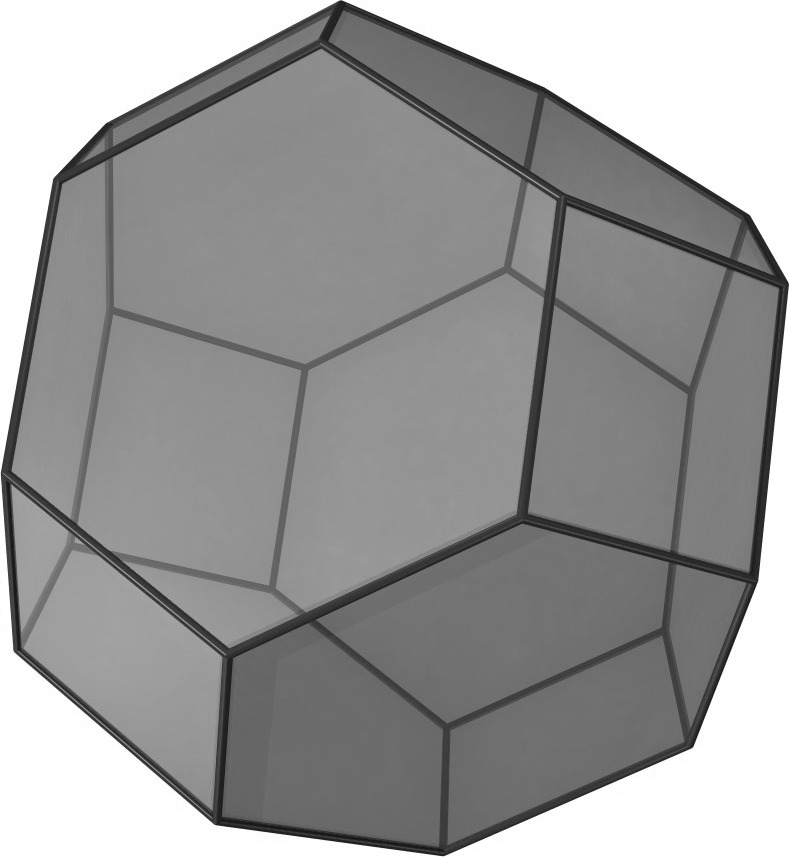}
\end{center}
\caption{Truncated octahedron.}
\label{fig:Truncatedoctahedron}
\end{figure}

\subsubsection{Triangulation of the torus}

The sphere does not seem to be special with regard to triangulations yielding minimal complexes.  For instance, we can triangulate the torus as shown in Figure \ref{fig:Torus} to obtain an irreducible divisor class on $\M_{0,n}$ whenever $n \ge 10$.

\begin{figure}
\begin{center}
\includegraphics[scale=0.4]{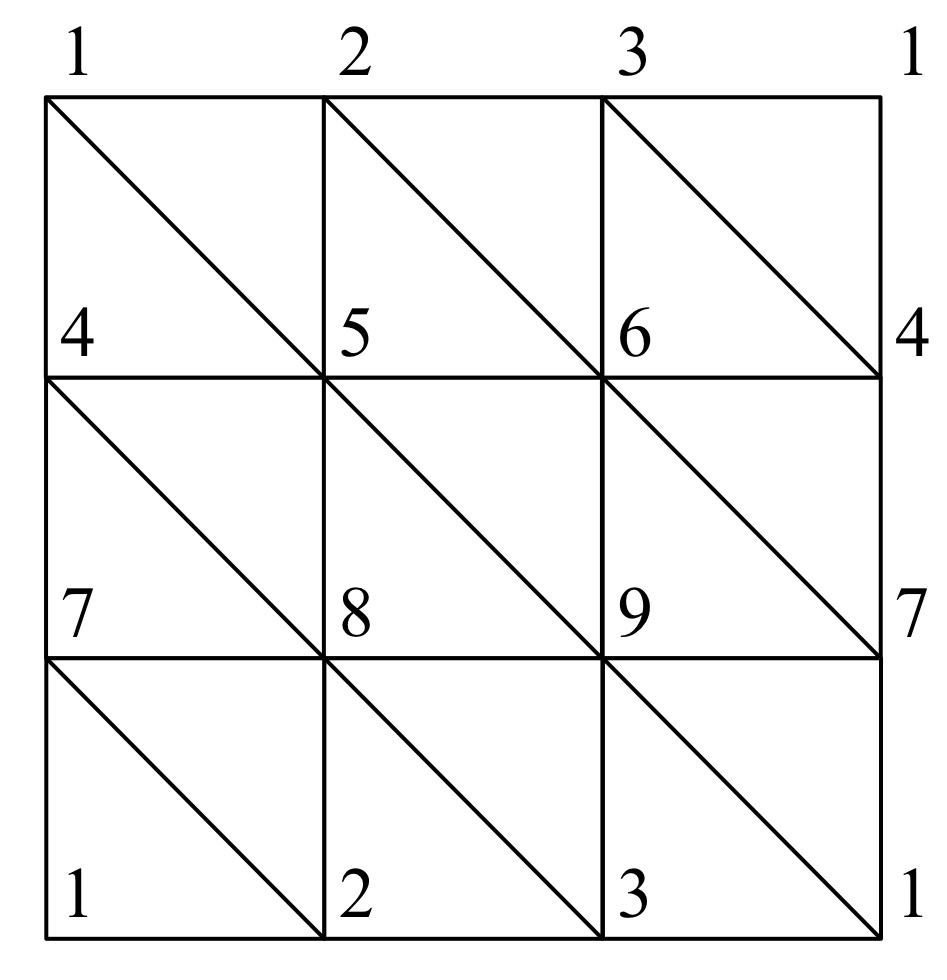}
\end{center}
\caption{Triangulated torus.}
\label{fig:Torus}
\end{figure}

\subsubsection{And many more}

The examples here of non-singular minimal 2-complexes comprise only a tiny fraction of what exists.  It appears that most triangulations of a closed surface such that each vertex belongs to an even number of triangles yield minimal complexes balanced by weights in $\{\pm 1\}$.  Given such a triangulation, it is usually possible to refine the triangulation to obtain another minimal complex supported on a larger number of vertices.  In this way, as $n$ increases the number of minimal 2-complexes yielding irreducible divisor classes on $\M_{0,n}$ seems to grow quite quickly.  A classification of surface triangulations yielding minimal complexes, or an estimate of the growth rate of such objects, would be interesting.  Moreover, the example of a cycle of tetrahedra suggests that minimal complexes are sometimes obtained by attaching various closed surface triangulations in an appropriate way.  Additionally, all the examples described in this section are non-singular and minimal over any ring $R$, with weights in $\{\pm 1\}$; surely there are interesting characteristic dependent complexes as we saw in the case of graphs (cf., Proposition \ref{Prop:MinimalGraphs}), and interesting singular complexes too (in fact, many hypertree divisors correspond to singular complexes, cf., \S\ref{sec:CTasDel}).


\section{Relation to hypertree divisors}\label{sec:Hypertree}

In this section we first recall the definition of a hypertree divisor from \cite{CT13} and then prove that the only ones with degree two for some choice of $\psi$-class are those coming from $n=6$ and $n=7$.  We proceed to identify balanced complexes corresponding to a few hypertree divisors and then present our counterexamples to the hypertree conjecture.

\subsection{Background to the hypertree construction}\label{sec:CTbackground}

The divisors constructed by Castravet and Tevelev are parameterized by a combinatorial object they term a \emph{hypertree}.

\begin{definition}
An \emph{irreducible hypertree} on a set $N$ is a collection of subsets $\Gamma = \{ \Gamma_1 \ldots , \Gamma_d \}$ of $N$ satisfying the following properties:
\begin{enumerate}
\item  Each subset $\Gamma_i$ contains at least 3 elements.
\item  Any $j \in N$ is contained in at least two subsets $\Gamma_i$.
\item  (Convexity)  For any $S \subset \{ 1, \ldots , d \}$ with $1 < \vert S \vert < d$,
$$ \vert \bigcup_{i \in S} \Gamma_i \vert -2 > \sum_{i \in S} ( \vert \Gamma_i \vert -2) . $$
\item  (Normality)
$$ \vert N \vert -2 = \sum_{i=1}^d ( \vert \Gamma_i \vert -2) .$$
\end{enumerate}
\end{definition}

For each hypertree they consider the product of forgetful maps
$$ \overline{M}_{0,n+1} \to \prod_{i=1}^d \overline{M}_{0,\Gamma_i \cup \{ n+1 \}} .$$
The hypertree axioms precisely guarantee that this map is birational.  For example, (1) implies that none of the forgetful maps are trivial, and (4) implies that the dimensions of the domain and codomain are equal.  It turns out this map always contracts a single, irreducible, non-boundary divisor, which Castravet and Tevelev call a \emph{hypertree divisor}.  By construction, each of these divisors is the pull-back of an irreducible divisor from $\overline{M}_{0,n}$, and each generates an extremal ray of the effective cone.

\begin{remark}\label{rem:DetForm}
There is a determinantal formula for hypertree divisors with all $\Gamma_i$ of size three in terms of homogeneous coordinates on $(\PP^1)^n$ in \cite[Proposition 8.1]{CT13}.  Their formula, and proof of its validity, extend immediately to the case that at most one $\Gamma_i$ has cardinality four\footnote{In fact, Opie has provided a determinantal expression for \emph{all} hypertrees in \cite[Theorem 3.1]{Opi13}.}, which covers all hypertrees for $n \le 9$ according to \cite[Figure 1]{CT13}.  Moreover, the translation from this coordinate system to ours on $\PP^{n-3}$ is straightforward.  In this way, one can write explicitly the element of $\Cox(\M_{0,n})$, hence balanced complex, corresponding to these hypertrees.
\end{remark}

\subsection{Degree of hypertree divisors}\label{sec:Deg}

Here we show that, up to permutation and pull-back, there are only two hypertree divisors of degree two in any Kapranov presentation $\M_{0,n} \cong \Bl\PP^{n-3}$.  These are the unique hypertree divisors in $n=6$ and $n=7$.  First, some preliminary notation and results.

\begin{definition}
The \emph{degree of $D\in\Pic(\M_{0,n})$ relative to $\psi_i$} is the coefficient of $H$ under the isomorphism $\Pic(\M_{0,n}) \cong \mathbb{Z}H\bigoplus_I \mathbb{Z}E_I$ induced by the corresponding isomorphism $\M_{0,n} \cong \Bl \PP^{n-3}$.  We denote this by $\deg(D)$, or $\deg^i(D)$ if the dependence on $\psi_i$ is to be emphasized.  The \emph{minimal degree} is $\deg^{min}(D) := \min_{1\le i \le n}\{\deg^i(D)\}$.
\end{definition}

\begin{remark}
This can be phrased without mention of Kapranov's blow-up construction.  Fix $n-1$ general lines in $\PP^2$, indexed by $\{1,\ldots,\widehat{i},\ldots,n\}$, and a point $p\in \PP^2$ not lying on any of them.  The pencil of lines through $p$ yields a curve $C_i \subset \M_{0,n}$ by viewing $p$ as the $i^{\text{th}}$ marked point and the intersection with the lines as the remaining marked points.  Then $\deg^i(D) = D\cdot C_i$.  To see this, note that
\[ \Delta_I \cdot C_i = \begin{cases}
1 & \textrm{if $\vert I \vert = 2$, $i \notin I$}\\
0 & \textrm{otherwise} \end{cases}, \]
where $\Delta_I$ denotes the boundary divisor with points indexed by $I$ on one side of a node.  The result then follows from the standard formula for the $\psi$ classes (see, for example, \cite[Exercise 1.35]{Morrison}):
$$ \psi_i = \sum_{q,r \in I, i \notin I} \Delta_I \text{ for any fixed distinct } q,r \neq i . $$
\end{remark}

In terms of our coordinatization of $\Cox(\M_{0,n})$, which depends on a choice of $\psi$-class, the degree of the class of a multi-homogeneous polynomial is the total degree in the $y$-variables.

\begin{example}
If $D$ is effective then $\deg(D) \ge 0$.  The degree of a boundary divisor is 1 or 0, depending on the choice of $\psi$-class, so the minimal degree of any boundary divisor is 0.  The minimal degree of any Keel-Vermeire divisor is 2.
\end{example}

\begin{proposition}
\label{Prop:DegreeOne}
Every effective divisor class of minimal degree 0 or 1 is an effective combination of boundary divisor classes.
\end{proposition}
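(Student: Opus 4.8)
The plan is to choose an index $i$ realizing the minimal degree and to work throughout in the corresponding Kapranov presentation $\M_{0,n} \cong \Bl\PP^{n-3}$, so that $D$ becomes an effective class of degree $0$ or $1$; since the notion of boundary divisor is intrinsic to $\M_{0,n}$, a decomposition obtained in one presentation suffices. Fix a section $f \in \Cox(\M_{0,n})$ of class $D$, and recall that $\deg(D)$ is the total degree of $f$ in the $y$-variables. In the degree $0$ case the argument is immediate: $f$ has $y$-degree $0$ and is therefore a polynomial in the exceptional sections $x_I$ alone. Such a polynomial is automatically $\Ga$-invariant, and distinct monomials $\prod_I x_I^{a_I}$ have distinct classes $\sum_I a_I E_I$ since the $E_I$ form part of a basis of $\Pic(\M_{0,n})$; hence $f$ is a scalar multiple of a single monomial, and $D = \sum_I a_I E_I$ is an effective combination of the (degree zero) exceptional boundary divisors.

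The degree $1$ case carries the real content. Here I would first factor out maximal powers of the $x_I$, writing $f = \left(\prod_I x_I^{b_I}\right) g$ with $g$ of $y$-degree $1$ and divisible by no $x_I$; note that $g$ remains homogeneous and $\Ga$-invariant, hence lies in $\Cox(\M_{0,n})$. By Theorem \ref{thm:CoxCom}, $g$ corresponds to a balanced $0$-complex $\Delta$, that is, a set of vertices carrying nonzero weights summing to zero, so that $|\Supp(\Delta)| \ge 2$. Applying the formula of Lemma \ref{lem:DelClass} to a $0$-complex (each simplex being a single vertex) gives $D_\Delta = H - \sum_{I \cap \Supp(\Delta) = \varnothing} E_I$. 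Choosing any two vertices $a,b \in \Supp(\Delta)$, the degree one boundary divisor associated to the pair $\{a,b\}$ has class $H - \sum_{I \cap \{a,b\} = \varnothing} E_I$ by Example \ref{ex:orthoplex}. Subtracting, one finds that $D_\Delta$ equals this boundary class plus $\sum E_I$ over the index sets $I$ disjoint from $\{a,b\}$ but meeting $\Supp(\Delta)$, so $D_\Delta$ is an effective sum of boundary divisors, and therefore so is $D = \sum_I b_I E_I + D_\Delta$.

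The only obstacle worth flagging is the bookkeeping of exceptional coefficients in the degree $1$ case: one must confirm that $D_\Delta - (H - \sum_{I \cap \{a,b\} = \varnothing} E_I)$ is a \emph{nonnegative} combination of the $E_I$. This follows from the containment $\{I : I \cap \Supp(\Delta) = \varnothing\} \subseteq \{I : I \cap \{a,b\} = \varnothing\}$, which holds precisely because $\{a,b\} \subseteq \Supp(\Delta)$. Everything else reduces to the correspondence of Theorem \ref{thm:CoxCom} together with the explicit class formulas already established, so no further computation is needed.
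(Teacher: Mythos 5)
Your proof is correct, but it follows a genuinely different route from the paper's. The paper argues geometrically: for degree $0$ it intersects $D$ with the covering curves $L_I$ (strict transforms of lines through a general point of the span of the points indexed by $I$) to force all exceptional coefficients to be non-positive, and for degree $1$ it decomposes $D$ into an irreducible degree-one component plus degree-zero pieces and invokes the fact that an irreducible degree-one divisor is the strict transform of a hyperplane, which passes through linear spans of the blown-up points. You instead stay entirely inside the Cox-ring framework of \S\ref{sec:CoxInvar}--\S\ref{sec:Complexes}: in degree $0$ you observe that a $\Pic$-homogeneous polynomial in the $x_I$ alone must be a single monomial (since the $E_I$ are independent in $\Pic$), and in degree $1$ you factor out exceptional sections and apply Theorem \ref{thm:CoxCom} to identify the remaining factor with a balanced $0$-complex, whose class you compute via Lemma \ref{lem:DelClass} and compare against the boundary class of Example \ref{ex:orthoplex}; the containment $\{I : I \cap \Supp(\Delta) = \varnothing\} \subseteq \{I : I \cap \{a,b\} = \varnothing\}$ that you flag is exactly the right point to check, and it holds. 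The trade-offs: the paper's argument is independent of the balanced-complex machinery, and its covering-curve technique is reused later (\S\ref{sec:CTcounter}) to prove extremality, so it earns its keep; your argument is more uniform in that it avoids decomposing a divisor into irreducible components (which implicitly presumes a field), works verbatim over an arbitrary base ring, and identifies explicitly which boundary classes occur in the decomposition. The one step you assert without proof---that after factoring $f = \bigl(\prod_I x_I^{b_I}\bigr) g$ the factor $g$ is still $\Ga$-invariant---is legitimate (multiplication by a monomial is injective, and the $x_I$ are $\Ga$-fixed) and is the same maneuver the paper itself performs in the proof of Proposition \ref{prop:EffIff}.
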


\begin{proof}
Suppose $\deg^{min}(D) \le 1$, so that for some Kapranov isomorphism we have $D = aH - \sum_I b_I E_I$ with $a=0$ or 1.  Let $L_I$ be the strict transform of a line in $\PP^{n-3}$ passing through a general point of the linear span of the points in $I$.  Since $L_I$ covers $\overline{M}_{0,n}$, we see that $L_I \cdot D' \geq 0$ for all effective divisors $D'$.  Thus, if $a=0$, we must have $b_I \leq 0$ for all $I$, and hence $D$ is an effective sum of the boundary divisors $E_I$.  If $a=1$ then $D$ is the sum of an irreducible effective divisor of degree 1 and an effective divisor of degree 0.  Every irreducible effective divisor of degree 1, however, is the strict transform of a hyperplane in $\PP^{n-3}$.  Since such a hyperplane passes through at most $n-3$ general points, and any hyperplane passing through a given set of points also passes through the linear subspaces spanned by these points, we see that $D$ is an effective combination of the boundary divisor $H - \sum_{i,j \notin I} E_I$, for some $i$ and $j$, and exceptional boundary divisors.
\end{proof}

\begin{lemma}
\label{Lemma:HypertreeDegree}
Let $D_{\Gamma}$ be the pull-back to $\M_{0,n}$ of an irreducible hypertree divisor in $\overline{M}_{0,I}$.  Then
\[ \deg^i(D_{\Gamma}) = \begin{cases}
d-v_i & \textrm{if $i \in I$}\\
d-1 & \textrm{if $i \notin I$} \end{cases}\]
where $d$ is the number of sets in $\Gamma$ and $v_i$ is the valence of vertex $i$.  In particular, $\deg^{min}D_{\Gamma} = d-v_{max}$, where $v_{max}$ is the maximum valence of the vertices in $\Gamma$.
\end{lemma}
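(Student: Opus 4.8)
The plan is to read off $\deg^i(D_\Gamma)$ from an explicit equation for the hypertree divisor, exploiting the observation recorded just before the lemma that $\deg^i$ of a class is the total degree in the variables $y_k$, i.e.\ the degree of the defining form on $\PP^{n-3}$ once point $i$ is chosen as the special ($\psi_i$) point. In the $\Ga$-invariant coordinates this amounts to placing point $i$ at infinity, recording the remaining marked points by their positions $t_k := \frac{y_k}{z_k}$ for $k \ne i$, and taking the differences $t_k - t_l$ as homogeneous coordinates on $\PP^{n-3}$; then $\deg^i(D_\Gamma)$ is the degree of the hypertree equation as a form in these differences.

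First I would invoke the determinantal presentation of Remark \ref{rem:DetForm}, which writes a section of $D_\Gamma$ as a sum of monomials, each a product of brackets $[k\,l] = y_k z_l - y_l z_k$ with $k,l \in I$. Under the substitution above a bracket becomes the difference $t_k - t_l$ when $k,l \ne i$, and becomes a nonzero constant when one index equals $i$ (since $i$ sits at infinity). Hence the degree contributed by each monomial is exactly the number of its brackets that do \emph{not} involve $i$, and the problem reduces to counting brackets. The key quantitative input — and the step I expect to require the most care — is the multidegree: the number of brackets containing a fixed index $k$ is $\delta_k = v_k - 1$, which one reads off from the determinantal formula (the coordinates of $k$ enter only through the $v_k$ sets $\Gamma_j$ containing it). I would check this identity carefully, since everything else is formal.

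Granting $\delta_k = v_k - 1$, each bracket meets two indices, so the total number $B$ of brackets in any monomial satisfies $2B = \sum_{k \in I}\delta_k = \sum_{k\in I}(v_k - 1)$. Now $\sum_{k \in I} v_k = \sum_{j=1}^d |\Gamma_j|$ counts incidences, and the Normality axiom gives $\sum_j(|\Gamma_j| - 2) = |I| - 2$, whence $\sum_{k \in I} v_k = (|I|-2) + 2d$ and $\sum_{k\in I}(v_k-1) = 2d - 2$; thus $B = d-1$, independent of the monomial. Subtracting off the brackets that involve the special index yields the formula: if $i \in I$ there are $\delta_i = v_i - 1$ such brackets, so $\deg^i(D_\Gamma) = (d-1) - (v_i - 1) = d - v_i$, while if $i \notin I$ no bracket involves $i$ and every bracket survives, giving $\deg^i(D_\Gamma) = d-1$.

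For the final assertion, axiom (2) forces $v_k \ge 2$ for every $k \in I$, so over the indices $i \in I$ the quantity $d - v_i$ is smallest at a vertex of maximal valence, giving $d - v_{max}$, and for $i \notin I$ the value $d - 1$ is strictly larger (as $v_{max} \ge 2$); hence $\deg^{min} D_\Gamma = d - v_{max}$. As a cross-check for the case $i \in I$, one can instead use $\deg^i(D) = D \cdot C_i$ together with the projection formula: the covering curve $C_i$ pushes forward under the forgetful map $\M_{0,n} \to \overline{M}_{0,I}$ to the corresponding covering curve, reducing the claim to the degree of the hypertree divisor inside its own moduli space $\overline{M}_{0,I}$, where the same bracket count applies.
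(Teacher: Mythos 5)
Your arithmetic skeleton is sound, and several ingredients are genuinely correct: the reduction to the minimal ambient space via the projection formula, the identity $\sum_{k\in I}(v_k-1)=2d-2$ from normality, and the observation that, once point $i$ is placed at infinity, the degree of a bracket monomial is the number of brackets avoiding $i$. But the proposal has a genuine gap, and it sits exactly at the step you yourself flag as ``requiring the most care.'' The multidegree claim $\delta_k=v_k-1$ is not something one can simply read off; it is essentially equivalent to the nontrivial content of \cite[Theorem 4.2]{CT13a} (the coefficients of the point-exceptional divisors $E_j$ in the class of $D_\Gamma$), which is precisely the input the paper cites. Indeed, $\delta_k=D_\Gamma\cdot C_k'$, where $C_k'$ is the curve obtained by moving the $k$-th marked point while fixing the others, and computing these intersection numbers requires either the class formula or a genuine geometric analysis of the hypertree divisor; your parenthetical heuristic (``the coordinates of $k$ enter only through the $v_k$ sets containing it'') does not by itself distinguish $v_k-1$ from, say, $v_k$. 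Since everything else in your argument is, as you say, formal bookkeeping, deferring this identity means the central step of the proof is missing. There is also a gap in generality: the determinantal presentation of Remark \ref{rem:DetForm} is only available when every $\Gamma_i$ has size three (or at most one has size four), whereas the lemma is stated, and later applied in Proposition \ref{Prop:degtwoCT}, for arbitrary irreducible hypertrees, whose sets $\Gamma_i$ can be large. To cover those cases you would need Opie's general determinantal expression \cite{Opi13} together with an analysis of its bracket structure, which you do not supply.

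For contrast, the paper's proof avoids explicit equations entirely. For $i\notin I$ it quotes the class formula of \cite[Theorem 4.2]{CT13a} directly (the $H$-coefficient there is $d-1$). For $i\in I$ it converts that same class into $\deg^i$ by intersecting with the pencil curve $C_i$ computing $\deg^i$, using that $C_i$ meets the hyperplane class, the point-exceptional classes $E_j$ with $j\ne i$, and the higher exceptional classes $E_J$ in known, elementary intersection numbers, and then performs the same normality computation you do. So the two arguments share their combinatorial endgame, but where the paper supplies the hard geometric input by citation of a class formula, your proposal replaces it with an unverified (and, for general hypertrees, unavailable) property of an explicit equation. If you want to salvage your route, the cleanest fix is to note that the multidegree $(\delta_k)$ is determined by intersection with the curves $C_k'$, and then derive $\delta_k=v_k-1$ from \cite[Theorem 4.2]{CT13a} --- but at that point you have reproduced the paper's proof with an extra layer.
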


\begin{proof}
If $i \notin I$, the result follows immediately from \cite[Theorem 4.2]{CT13}.  On the other hand, if $i \in I$, then, for the choice of Kapranov isomorphism given in \cite[Theorem 4.2]{CT13}, we have $\deg^i(H) = n-2$, $\deg^i(E_j) = 1$ if $j \neq i$, and $\deg^i(E_J) = 0$ for all other sets $J$.  It follows then that
$$ \deg^i(D_{\Gamma}) = (d-1)(n-2) - \sum_{j \neq i} (d-v_j ) = 2-d-n + \sum_{j \neq i} v_j $$
$$ = 2-d-n-v_i + \sum_{j=1}^d \vert \Gamma_j \vert = d-v_i . $$
The last statement follows from the fact that every vertex has valence $\ge 2$.
\end{proof}

\begin{proposition}\label{Prop:degtwoCT}
The only irreducible hypertree divisors of minimal degree 2 are the unique hypertree divisor on 6 vertices, the unique hypertree divisor on 7 vertices, and their pull-backs.
\end{proposition}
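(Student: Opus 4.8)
The plan is to reduce the statement to a purely combinatorial classification of the irreducible hypertrees $\Gamma = \{\Gamma_1,\ldots,\Gamma_d\}$ on a set $N$ with $\deg^{min}D_\Gamma = 2$, and then to show there is exactly one such hypertree on $6$ vertices and one on $7$ vertices, with none others. By Lemma \ref{Lemma:HypertreeDegree} the condition $\deg^{min}D_\Gamma = 2$ is equivalent to $v_{max} = d-2$, where $v_{max}$ is the largest vertex valence. Since every valence is at least $2$, this already forces $d \ge 4$. Throughout I will use the bookkeeping coming from Normality: summing $|\Gamma_i|$ gives $\sum_i |\Gamma_i| = \sum_{j \in N} v_j = |N| + 2d - 2$, and combining with $v_j \ge 2$ yields $|N| \le 2d - 2$.

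The structural heart of the argument is an application of Convexity to pairs of sets through a vertex of maximal valence. Fix a vertex $j_0$ with $v_{j_0} = d-2$; it lies in $d-2$ of the sets (the ``petals'') and is absent from exactly two, say $\Gamma_a$ and $\Gamma_b$. For any two petals $\Gamma_i,\Gamma_{i'}$, Convexity with $S = \{i,i'\}$ (valid since $d \ge 4$) forces $|\Gamma_i \cap \Gamma_{i'}| \le 1$, so the petals pairwise meet only in $j_0$. Hence their non-$j_0$ vertices are all distinct, and since each petal has at least two such vertices, the set $P$ of petal vertices other than $j_0$ satisfies $|P| \ge 2(d-2)$. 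Each vertex of $P$ lies in a single petal, so to attain valence $\ge 2$ it must also lie in $\Gamma_a \cup \Gamma_b$; thus $P \subseteq \Gamma_a \cup \Gamma_b$.

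I would then bound $d$. A short computation with the size identity gives $|\Gamma_a| + |\Gamma_b| = d + 1 + |Q|$, where $Q = N \setminus (\{j_0\} \cup P)$, and Convexity applied to $\{a,b\}$ gives $|\Gamma_a \cap \Gamma_b| \le 1$; since $Q \subseteq \Gamma_a \cap \Gamma_b$ this forces $|Q| \le 1$. Combining the chain $2(d-2) \le |P| \le |\Gamma_a \cup \Gamma_b| \le |\Gamma_a| + |\Gamma_b| \le d + 2$ yields $d \le 6$. The case $d = 6$ is then excluded by pushing these inequalities to equality: it forces $\Gamma_a$ and $\Gamma_b$ to be disjoint, yet the identity $|\Gamma_a|+|\Gamma_b| = d+1+|Q|$ then requires $|Q| = 1$ with $Q \subseteq \Gamma_a \cap \Gamma_b = \varnothing$, a contradiction. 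Hence $d \in \{4,5\}$. I expect this bounding-and-exclusion step to be the main obstacle, since it is where the interplay between Normality and Convexity must be squeezed exactly.

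Finally I would resolve the two surviving cases. For $d = 4$ we have $v_{max} = 2$, so every valence equals $2$; the size identity then gives $|N| = 6$ and forces all $\Gamma_i$ to be triples. Viewing each valence-two vertex as an edge joining the two triples containing it produces a $3$-regular graph on four nodes, which Convexity makes simple, hence $K_4$; this recovers a unique hypertree. For $d = 5$ the same accounting gives $|N| = 7$, all triples, with $j_0$ the unique valence-three vertex; the three petals partition the remaining six vertices into pairs, and each such vertex lies in exactly one of $\Gamma_a,\Gamma_b$, so $\Gamma_a$ and $\Gamma_b$ split $P$ into two triples. Convexity applied to a petal together with $\Gamma_a$ forbids $\Gamma_a$ from containing a whole petal-pair, forcing $\Gamma_a$ (and $\Gamma_b$) to take one vertex from each pair; up to the evident symmetries this determines $\Gamma$ uniquely. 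One then checks directly that the two resulting hypertrees satisfy all axioms and identifies them with the unique hypertrees on $6$ and $7$ vertices, whose pull-backs account for all remaining minimal-degree-two hypertree divisors.
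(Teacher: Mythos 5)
Your proof is correct, and its skeleton coincides with the paper's: both reduce via Lemma \ref{Lemma:HypertreeDegree} to classifying hypertrees with a vertex of valence $d-2$, fix such a vertex, and play the convexity axiom for pairs (pairwise intersections of size at most one) against normality and the valence-$\geq 2$ axiom. In fact your identity $|\Gamma_a|+|\Gamma_b| = d+1+|Q|$ is the paper's identity $|\Gamma_1|+|\Gamma_2| = n+d+1-|\bigcup_{\Gamma_i \ni v}\Gamma_i|$ in different clothing, since $|\bigcup_{\Gamma_i\ni v}\Gamma_i| = n-|Q|$. Where you genuinely diverge is the endgame. The paper squeezes $n$ between $d+2$ (from normality) and $d+2$ (from the valence/convexity count), concluding $n=d+2$, all sets triples, hence $n\in\{6,7\}$, and stops there, implicitly relying on the known classification of hypertrees on $6$ and $7$ vertices from \cite{CT13a}. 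You instead bound $d\le 6$, exclude $d=6$ by an equality analysis, and then reconstruct the hypertrees explicitly: the $K_4$ dual-graph argument for $d=4$ and the petal-splitting argument for $d=5$. This buys self-containedness---your argument actually proves the uniqueness that the proposition's statement presupposes---at the cost of a case analysis. Two small remarks. First, your chain can be tightened: since $P$ and $Q$ are disjoint subsets of $\Gamma_a\cup\Gamma_b$ and $Q\subseteq\Gamma_a\cap\Gamma_b$, one gets $2(d-2)\le |P| \le |\Gamma_a|+|\Gamma_b|-|\Gamma_a\cap\Gamma_b|-|Q| \le d+1-|Q| \le d+1$, so $d\le 5$ outright and the $d=6$ exclusion is unnecessary. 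Second, in the $d=5$ case the phrase ``the same accounting gives $|N|=7$'' deserves one more line: if $|Q|=1$ then $\Gamma_a\cap\Gamma_b=Q$, so $|\Gamma_a\cup\Gamma_b|=6$, yet it must contain the disjoint union $P\sqcup Q$ of size at least $7$, a contradiction; hence $|Q|=0$ and everything follows.
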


\begin{proof}
Note that, by Proposition \ref{Prop:DegreeOne}, every irreducible hypertree divisor has minimal degree at least 2.  By Lemma \ref{Lemma:HypertreeDegree}, it therefore suffices to consider a hypertree with a vertex $v$ of valence $d-2$.  Suppose without loss of generality that $\Gamma_1$ and $\Gamma_2$ are the only sets in the hypertree that do not contain $v$. By convexity, any pair of sets cannot intersect in more than one vertex, so for each vertex $w \neq v$ there is at most one set containing both $v$ and $w$.  Hence
$$ \sum_{\Gamma_i \ni v} ( \vert \Gamma_i \vert - 2 ) = \vert \bigcup_{\Gamma_i \ni v} \Gamma_i \vert + (d-3) - 2(d-2) = \vert \bigcup_{\Gamma_i \ni v} \Gamma_i \vert - (d-1). $$
By normality, this implies that
$$ \vert \Gamma_1 \vert + \vert \Gamma_2 \vert = n+d+1 - \vert \bigcup_{\Gamma_i \ni v} \Gamma_i \vert .$$
Since the valence of each vertex is at least 2, every $w \neq v$ must be contained in either $\Gamma_1$ or $\Gamma_2$, and since $|\Gamma_1 \cap \Gamma_2| \le 1$, it follows that $\vert \Gamma_1 \vert + \vert \Gamma_2 \vert$ is either $n-1$ or $n$.  Thus we have
\[\vert \bigcup_{\Gamma_i \ni v} \Gamma_i \vert \geq \begin{cases}
n & \textrm{if $\vert \Gamma_1 \vert + \vert \Gamma_2 \vert = n-1 $}\\
n-1 & \textrm{if $\vert \Gamma_1 \vert + \vert \Gamma_2 \vert = n$}
\end{cases}\]
so $n=d+2$ or $d+1$, but the latter is impossible by normality.  This forces $\vert \Gamma_i \vert = 3$ for all $i$, so $\vert \Gamma_1 \vert + \vert \Gamma_2 \vert = 6$.  But again, this sum is either $n-1$ or $n$, so $n$ is either 6 or 7.
\end{proof}

\subsection{Simplicial complex presentation of hypertree divisors}\label{sec:CTasDel}

Using the method described in Remark \ref{rem:DetForm}, it is straightforward to compute that the minimal 1-complex comprising the edges of a triangle meeting a triangle at a single vertex is the $n=6$ hypertree divisor (i.e., the Keel-Vermeire divisor), and the minimal 1-complex comprising the edges of a hexagon is the $n=7$ hypertree divisor.  It follows, then, from  Proposition \ref{Prop:degtwoCT} that all the other minimal 1-complexes described in Theorem \ref{thm:GenKV} yield irreducible divisor classes that are not equal to that of a hypertree divisor.

Note that, by Lemma \ref{Lemma:HypertreeDegree}, in order to get a degree 2 divisor class from the $n=7$ hypertree we must pick the $\psi$-class corresponding to the central vertex in its depiction in \cite[Figure 1]{CT13}.  Another straightforward computation using the method of Remark \ref{rem:DetForm} shows that if we use any other $\psi$-class for this hypertree we get the degree 3 divisor corresponding to a cycle of 3 tetrahedra as described in \S\ref{subsec:CycTet}.  Many of the $n=8$ and $n=9$ hypertrees correspond to singular, though still quite interesting, simplicial complexes.  A systematic translation between the hypertree language and the balanced complex language seems a worthwhile future pursuit.  Since the hypertree combinatorics are rooted in the geometry of products of forgetful maps, reinterpreting that geometry in this setting may be a natural place to start.

\subsection{Counterexamples to the hypertree conjecture}\label{sec:CTcounter}

Here we provide two counterexamples to the hypertree conjecture, one in $\M_{0,7}$ and one in $\M_{0,9}$.  Note that, by pulling these divisors back via forgetful maps, one obtains counterexamples in $\M_{0,n}$ for all $n \geq 7$.  In addition to the examples described below, we have checked by computer that, if $\Delta$ is the edges of a triangle meeting a pentagon at one vertex, then $D_\Delta \in \Eff(\M_{0,8})$ lies outside the cone generated by hypertree and boundary divisors.  We suspect that the divisor $D_\Delta$, where $\Delta$ is any even cycle with at least six edges, should generate an extremal ray of $\Eff(\M_{0,n})$, but at present we have no proof.

Our approach is via covering curves.  Recall that a curve $C \subset X$ is said to \emph{cover} a subvariety $Y \subset X$ if $C \subset Y$ and there is an irreducible curve numerically equivalent to $C$ passing through a general point of $Y$.  It is well-known that a divisor $D \subset X$ spans an extremal ray of the effective cone if $D$ is covered by a curve $C$ such that $D \cdot C < 0$ (see, e.g., \cite[Lemma 1.4.2]{Rulla}).

\subsubsection{The Case $n=9$}\label{subsec:oct}

Consider the divisor $D_{oct} := D_\Delta$ where $\Delta$ is the set of edges of an octagon.  We saw in Theorem \ref{thm:CoxGen} that $h^0 ( \M_{0,n} , D_{oct} ) = 1$, spanned by an irreducible section.  Our goal is to construct a curve $F_9$ that covers this divisor and satisfies $F_9 \cdot D_{oct} < 0$.

Fix a conic $C_9 \subset \PP^2$, points $p_1 , \ldots, p_7 \in C_9$, and a point $p_8 \in \PP^2 \smallsetminus C_9$.  We define a configuration of four lines and four conics in $\PP^2$ as follows:  for $1 \leq i \leq 7$, $i$ odd, let $L_i$ be the line through $p_2$ and $p_i$.  For $2 \leq i \leq 8$, $i$ even, let $C_i$ be the conic through $p_4$, $p_6$, $p_8$, $p_{i+3}$, and $p_{i-3}$, where here $i+3$ and $i-3$ are taken mod 8.

The pencil of lines through $p_6$ determines a curve in $\M_{0,9}$, where each line is labelled by its intersection with the lines and conics.  More specifically, if we let $S$ be the blow-up of $\PP^2$ at the intersection points of the lines and conics, then the strict transforms $\tilde{L_i}, \tilde{C_i}$ of the lines $L_i$ and the conics $C_i$ give disjoint sections of the map $\pi_{p_6} : S \to \PP^1$.  We write $F_9 \subset \M_{0,9}$ for the corresponding curve in the moduli space.  To compute the intersection number $F_9 \cdot H$, we note that $H = \psi_9$, hence if $\sigma_9 : \PP^1 \to S$ is the corresponding section, then by push-pull and \cite[Exercise 1.33]{Morrison} we have
$$ F_9 \cdot H = F_9 \cdot \psi_9 = \sigma_9^* (c_1 (\omega_{\pi_{p_6}})) = \tilde{C_9} \cdot \omega_{\pi_{p_6}} = - \tilde{C_9}^2 = -(2^2 - 7) = 3.$$
To compute the intersection numbers $F_9 \cdot E_I$, we simply count the number of singular fibers of the map $\pi_{p_6} : S \to \PP^1$.  Note that a fiber of this map is singular if and only if the corresponding line through $p_6$ passes through one of the points of intersection of the lines $L_i$ and conics $C_i$. By construction, therefore, we have
\[F_9 \cdot E_I = \begin{cases}
2 & \textrm{if } I = \{ 1,3,5,7 \}\\
1 & \textrm{if } I = \{ 2,4,6,8 \} \textrm{ or } I = \{ i,i+3,i-3 \} \textrm{ for $i$ odd}\\
0 & \textrm{otherwise.}\\
\end{cases}\]
It follows that
$$ F_9 \cdot D_{oct} = 2 \cdot 3 - 5 - 2 = -1. $$

\begin{proposition}
The curve $F_9$ covers $D_{oct}$.
\end{proposition}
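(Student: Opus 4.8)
The plan is to verify the two requirements in the definition of a covering curve: that $F_9$ lies on $D_{oct}$, and that a curve numerically equivalent to $F_9$ passes through a general point of $D_{oct}$. The first requirement is purely formal. By Theorem \ref{thm:CoxGen} the class $D_{oct}$ has a one-dimensional space of sections, so it is represented by a single effective divisor $D$. Since $F_9$ is irreducible, were it not contained in $\Supp(D)$ the restriction $D|_{F_9}$ would be an effective divisor on $F_9$, forcing $F_9 \cdot D_{oct} \ge 0$; but the computation preceding the proposition gives $F_9 \cdot D_{oct} = -1 < 0$, so $F_9 \subseteq \Supp(D)$.

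For the second requirement, I would exploit the fact that the recipe producing $F_9$ takes as input only the projective configuration consisting of the conic $C_9$, the seven points $p_1,\dots,p_7 \in C_9$, and the point $p_8$. Any configuration in sufficiently general position (so that the strict transforms $\tilde{L_i}, \tilde{C_i}$ remain disjoint sections of the pencil map $\pi_{p_6}$) yields a curve built in the identical fashion, with the same intersection numbers against $H$ and every $E_I$. These configurations sweep out an irreducible parameter space: $C_9$ moves in the $5$-dimensional space of conics, the points $p_1,\dots,p_7$ on $C_9$ contribute $7$ dimensions, and $p_8 \in \PP^2$ contributes $2$, so after quotienting by the $8$-dimensional group $\PGL_3$ one obtains a $6$-dimensional irreducible family. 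Because $H$ and the $E_I$ generate $\Pic(\M_{0,9})$, all the curves in this family are numerically equivalent to $F_9$, and each lies on $D_{oct}$ by the preceding paragraph. Hence the incidence variety of pairs (configuration, point of the corresponding curve) is irreducible of dimension $6+1 = 7$ and maps into the irreducible $5$-fold $D_{oct} \subset \M_{0,9}$.

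It remains to show this map is dominant, which I expect to be the crux of the argument. Since $\dim D_{oct} = 5$, dominance is equivalent to the general fiber of the incidence map having the expected dimension $7 - 5 = 2$, and the natural way to see this is by reconstruction: a general point of $D_{oct}$ is a smooth line $\ell \cong \PP^1$ carrying nine marked points; one designates one of them as the pencil center $p_6$, picks the auxiliary point $p_2$ off $\ell$ and takes $L_i = \overline{p_2\, m_i}$ through four of the remaining marked points $m_i$, and then solves for $p_4, p_8$, the points $p_1,p_3,p_5,p_7$, and the conic $C_9$ so that the four conics $C_2,C_4,C_6,C_8$ (each of which already passes through $p_6 \in \ell$) cut the last four marked points out of $\ell$. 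The main obstacle is precisely to check that this system of conic-incidence conditions is consistent for a general target and carves out a locus of the expected dimension; granting this, the image of the incidence map is all of the irreducible divisor $D_{oct}$, so $F_9$ covers $D_{oct}$.
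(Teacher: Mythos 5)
Your first paragraph (containment of $F_9$ in $\Supp(D_{oct})$ via $F_9 \cdot D_{oct} < 0$ and irreducibility of the unique section) is fine, and your framing of the second requirement is correct. But the proof stops exactly where the proposition begins: dominance of the incidence map \emph{is} the content of the statement, and you explicitly defer it (``granting this\dots''). The dimension count alone proves nothing — a 7-dimensional irreducible incidence variety mapping to the 5-dimensional $D_{oct}$ can perfectly well have image a proper subvariety, with all fibers of dimension $\ge 3$; equality of the general fiber dimension with the expected value $2$ is equivalent to dominance, not evidence for it. Moreover, the reconstruction you sketch is not set up correctly: in the construction of $F_9$ the pencil center $p_6$ is not one of the nine marked points on a member $\ell$ of the pencil (the marked points are the intersections of $\ell$ with the four lines, the four conics away from $p_6$, and $C_9$ away from $p_6$), and the key claim — that the resulting system of conic-incidence conditions is consistent and of expected dimension for a general target — is precisely what would require proof.

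The paper closes this gap with a direct projective construction rather than a parameter count. Working in the Kapranov model, $D_{oct}$ is the strict transform of a quadric $Q \subset \PP^6$, and $F_9$ maps to a twisted cubic on $Q$ meeting the four planes $\Lambda_{i,i+3,i-3}$ ($i$ odd) and the 3-space $\Lambda_{2468}$ once each, and $\Lambda_{1357}$ twice, with its span $\Lambda$ meeting $\Lambda_{2468}$ and $\Lambda_{1357}$ in positive dimension (the latter using constancy of a cross-ratio). For a \emph{general} $p \in Q$ one then reverses this: choose lines $L^- \subset \Lambda_{1357}$, $L^+ \subset \Lambda_{2468}$ through $q = \Lambda_{1357} \cap \Lambda_{2468}$ and let $\Lambda = \langle p, L^-, L^+\rangle$; since ``meeting a fixed plane'' is an ample divisor class on $\mathbb{G}(3,6)$, some member of this 4-dimensional family of 3-spaces meets all four planes $\Lambda_{i,i+3,i-3}$. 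Then $S = Q \cap \Lambda$ is a quadric surface, irreducible and smooth for general $p$ (by comparison with the original $F_9$ and generality of $p$), and the unique bidegree $(1,2)$ curve on $S \cong \PP^1 \times \PP^1$ through $p$ and the four points $\Lambda \cap \Lambda_{i,i+3,i-3}$ is an irreducible curve numerically equivalent to $F_9$ passing through $p$. Some explicit construction of this kind — or an honest solution of your reconstruction problem — is indispensable; as written, your proposal does not prove the proposition.
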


\begin{proof}
Since $F_9 \cdot D_{oct} < 0$, any effective curve numerically equivalent to $F_9$ is contained in $D_{oct}$.  Since $D_{oct}$ is irreducible, it therefore suffices to show that $F_9$ covers a 5-dimensional space.  We will do this by showing that curves equivalent to $F_9$ dominate $M_{0,8}$.

More specifically, fix a line $L \subset \PP^2$ and points $q_1 , \ldots , q_7$ and $q_9$ in $L$.  We will show that there is a curve equivalent to $F_9$ whose image in $M_{0,8}$ under the map forgetting the 8th point contains the curve $L$ marked by the points $q_i$.  To see this, first choose a conic $C_9$ containing $q_9$, and label the other point of the intersection $L \cap C_9$ by $p_6$.  Choose a point $p_2 \in C_9$ and for $1 \leq i \leq 7$, $i$ odd, let $L_i$ be the line through $p_2$ and $q_i$.  Label the other point of the intersection $L_i \cap C_9$ by $p_i$.

Let $U\subset \PP^2$ be the complement of the points $p_i$ and $q_i$.  For any point $p \in U$, there exists a unique conic $C_2$ containing $\{ q_2, p_5, p_6, p_7, p \}$.  Similarly, there exists a unique conic $C_4$ containing $\{ q_4, p_1, p_6, p_7, p \}$, and a unique conic $C_6$ containing $\{ q_6, p_1, p_3, p_6, p \}$.  We claim that the set $Y \subset U \subset \PP^2$ of points $p$ such that the three conics $C_2 , C_4 , C_6$ mutually intersect at \emph{three} points has dimension at least one.  For each point $p \in U$, let $\varphi (p)$ be the 4th point of intersection of $C_2$ and $C_4$, different from $p$, $p_6$, and $p_7$.  Let us denote the conic $C_6$ by $C_p$, to emphasize its dependence on the point $p$, and consider the incidence correspondence
$$ Z = \{ (\varphi (p), C_p )\} \in U \times \PP H^0 (\mathcal{O}_{\PP^2} (2)) . $$
The fiber over a conic $C_p$ is the set of points $\varphi (p)$ obtained by varying the point $p$ along the fixed conic $C_p$.  If the fiber consists of just one point, it would imply that the conic $C_2$ containing $\{ q_2, p_5, p_6, p_7 \}$ and the fixed point $\varphi (p)$ passes through every point of $C_p$.  This in turn would imply that $C_2$ and $C_p$ are in fact the same conic, which is impossible.  It follows that the closure of the fiber over $C_p$ is a curve.  This curve must intersect $C_p$ in at least one point, and by definition such a point lies in $Y$.  Since a generic fiber, before taking the closure, contains a point of $Y$, we see that $Y$ is at least 1-dimensional.
  
Let $p_4$ be a point in the intersection $Y \cap C_9$, and let $p_8 \neq p_4$ be another point contained in the three conics $C_2 , C_4 , C_6$.  Finally, let $C_8$ be the unique conic containing $\{ p_3, p_4 , p_5, p_6, p \}$.  We then see that the pencil of lines through $p_6$ is numerically equivalent to $F_9$, and the line $L$ with marked points $q_i$ is one of the members of this pencil.

\end{proof}

\begin{corollary}
\label{cor:NEqualsNine}
The divisor $D_{oct}$ spans an extremal ray of $\Eff ( \M_{0,9} )$.  It lies outside the cone generated by boundary and hypertree divisors.\footnote{The divisor $D_{oct}$ also lies outside the cone generated by these and Opie's extremal rays.  By extremality, it suffices to check that $D_{oct}$ does not coincide with any class in the $S_n$ orbit of Opie's class.  The formula in \cite[\S5]{Opi13} shows that the degree of Opie's divisor is greater than 2 for all but possibly the last two psi-classes; since this formula completely describes the divisor class, one can use standard results, such as those in \cite{Morrison}, to compute the degree relative to the last two psi-classes and observe that it is indeed greater than 2 there as well.}
\end{corollary}

\begin{proof}
Since $D_{oct}$ is irreducible, $F_9$ covers $D_{oct}$, and $F_9 \cdot D_{oct} < 0$, we know $D_{oct}$ generates an extremal ray of $\Eff ( \M_{0,9} )$.  By Propositions \ref{Prop:degtwoCT} and \ref{prop:UniCom}, $D_{oct}$ is not equal to a hypertree or boundary divisor, and since it is extremal it therefore cannot be in the cone that they span.
\end{proof}

\subsubsection{The Case of $n=7$}\label{subsec:twotri}

Now consider the divisor $D_{tri} := D_\Delta$ where $\Delta$ is the edges of a disjoint union of two triangles.  We saw in Proposition \ref{Prop:MinimalGraphs} that $D_{tri}$ is effective if and only if $\Char(k) = 2$.  On the other hand, a straightforward linear algebra computation shows that the class $2D_{tri}$ is effective in every characteristic.  Indeed, if $\Delta'$ is the 1-complex depicted in Figure \ref{fig:tribritri}, then by taking a linear combination of the two sections we know to exist in $\HH^0 ( \M_{0,7}, 2D_{\Delta'} )$ by Example \ref{ex:2Dboundary}, one obtains a section divisible by a product of $x_I$ variables, and the quotient is in $\HH^0 ( \M_{0,7}, 2D_{tri} )$.  As in the previous section, our goal is to construct a curve $F_7$ such that $F_7 \cdot D_{tri} < 0$.

Fix 6 general lines $L_1 , \ldots , L_6 \subset \PP^2$, and consider the pencil of cubics spanned by $L_1 + L_2 + L_3$ and $L_4 + L_5 + L_6$.  Since the lines are general, no 3 of the basepoints of this pencil lie on a line other than one of the $L_i$'s, and thus the 6 other singular elements of the pencil are irreducible.  Let $C$ be an irreducible singular cubic in this pencil, and let $p \in C$ be the singular point.

The pencil of lines through $p$ gives a curve in $\M_{0,7}$ as follows:  each line through $p$ is labelled by the intersection of the line with $L_i$, with the $7^{th}$ point labelled by the unique point of intersection of the line with $C$ other than $p$.  More precisely, if we let $S$ be the blow-up of $\PP^2$ at $p$ and the intersection points of the 6 lines, then the strict transforms $\tilde{L_i} , \tilde{C}$ of the lines $L_i$ and the cubic $C$ give disjoint sections of the map $\pi_p : S \to \PP^1$.  We write $F_7 \subset \M_{0,7}$ for the curve obtained in this way.  By construction, we have:
$$ F_7 \cdot H = - \tilde{C}^2 = -(3^2 - 2^2 - 9) = 4 $$
\[F_7 \cdot E_I = \begin{cases}
1 & \textrm{if } I = \{ i,j \} \textrm{ where $i \leq 3$ and $j \geq 4$}\\
0 & \textrm{otherwise.}\\
\end{cases}\]
It follows that
$$ F_7 \cdot D_{tri} = 2 \cdot 4 - 9 = -1. $$

\begin{corollary}
\label{cor:NEqualsSeven}
The class $D_{tri}$ lies outside the cone generated by boundary and hypertree divisors.
\end{corollary}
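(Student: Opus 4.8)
The plan is to exploit the covering curve $F_7$, whose salient feature is $F_7 \cdot D_{tri} = -1 < 0$, and to show that $F_7$ meets every boundary divisor and every hypertree divisor non-negatively. Since $F_7 \cdot B$ depends only on the numerical class of $B$, and numerical classes are identified under base change, it suffices to verify these inequalities in any single characteristic; I would work in characteristic two, where $D_{tri}$ is effective with a unique, irreducible section by Theorem \ref{thm:GenKV} and Corollary \ref{cor:UniSec}, defining an irreducible divisor $Q$ of class $D_{tri}$. Granting $F_7 \cdot B \ge 0$ for all boundary and hypertree $B$, the corollary follows at once: a relation $D_{tri} = \sum_j a_j B_j$ in $N^1(\M_{0,7})$ with $a_j \ge 0$ and each $B_j$ a boundary or hypertree class would give $-1 = F_7 \cdot D_{tri} = \sum_j a_j (F_7 \cdot B_j) \ge 0$, a contradiction.

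The boundary divisors are immediate. For a general line $\ell$ through the singular point $p$, the six points $\ell \cap L_i$ and the residual intersection $\ell \cap C$ are distinct, so the general point of $F_7$ corresponds to a configuration of seven distinct points and hence lies off the boundary of $\M_{0,7}$. Thus $F_7$ is contained in no boundary divisor, and $F_7 \cdot B \ge 0$ for every such $B$. For the hypertree divisors I would use that $F_7$ covers $Q$: granting this, any irreducible divisor $B$ with $[B] \ne D_{tri}$ satisfies $F_7 \cdot B \ge 0$, because a general member of the covering family passes through a general point of $Q$, which lies off the proper closed subset $B \cap Q$, so that member is not contained in $B$ and meets it non-negatively. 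Every hypertree divisor is irreducible, and by Proposition \ref{Prop:degtwoCT} (as observed in \S\ref{sec:CTasDel}) its class is never $D_{tri}$: in degree other than two this is automatic, and the only degree-two hypertree divisors are the hexagon and the bowtie, whose complexes are not two disjoint triangles. Hence $F_7 \cdot B \ge 0$ for all of them.

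The crux is establishing that $F_7$ covers $Q$. This is the precise analogue of the covering statement proved above for $F_9$ and $D_{oct}$ in \S\ref{subsec:oct}, and I would prove it by the same strategy: through a general point of $Q$ one constructs a rational curve numerically equivalent to $F_7$ and contained in $Q$, using the pencil of lines through $p$ together with the family of irreducible singular cubics in the pencil spanned by $L_1+L_2+L_3$ and $L_4+L_5+L_6$. I expect the geometric construction of this covering family---identifying, for a general point of $Q$, the arrangement of six lines and singular cubic that realizes it---to be the main technical obstacle, exactly as the construction of the bidegree $(1,2)$-curves on the quadric surface was the heart of the $n=9$ argument. The characteristic-two proviso is harmless, since only the numerical class of $F_7$ enters and that is independent of the characteristic.
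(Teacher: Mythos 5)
Your overall skeleton is the paper's: pair the computation $F_7 \cdot D_{tri} = -1$ with the claim that $F_7$ meets every boundary and hypertree class non-negatively, and conclude that $D_{tri}$ cannot be a non-negative combination of them. Your handling of the boundary divisors is fine (the paper gets it even more cheaply, directly from the intersection numbers $F_7\cdot H = 4$ and $F_7\cdot E_I \in \{0,1\}$). The problem is your treatment of the hypertree divisors.

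There you route everything through the claim that $F_7$ covers the irreducible divisor $Q$ of class $D_{tri}$ in characteristic two, asserting that this is "the precise analogue" of the $n=9$ covering statement and provable by the same strategy. This is a genuine gap, and a serious one: the paper itself, in the remark immediately following this corollary, states that the authors \emph{strongly suspect} $F_7$ covers the relevant divisor (they phrase it for $2D_{tri}$) "but at present we have no proof." So the step you defer as a technical obstacle is precisely the step nobody has carried out; the analogy with $F_9$ is not automatic, since the geometry is different ($D_{oct}$ is effective over any field and its covering curves come from bidegree $(1,2)$ curves on a smooth quadric surface, whereas here the image of $F_7$ is a quartic curve and $D_{tri}$ is effective only in characteristic two). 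Fortunately the covering claim is also unnecessary for this corollary: on $\M_{0,7}$ there are, up to permutation of the markings, only two hypertree divisors (the pull-back of the $n=6$ Keel--Vermeire divisor and the unique $n=7$ hypertree), and their classes are explicitly known. The paper simply intersects $F_7$ with each of them numerically, using $F_7\cdot H = 4$ and the list of $F_7\cdot E_I$, and finds these intersections positive. Replacing your covering argument by this finite direct computation closes the gap and recovers the paper's proof; covering curves are only needed for the stronger extremality statement, which remains open for $n=7$.
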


\begin{proof}
The numerical calculation above shows that $F_7$ has nonnegative intersection with all of the boundary divisors.  Similarly, the intersection of $F_7$ with any hypertree divisor (of which, up to symmetry, there are only two) is positive.  Since $F_7 \cdot D_{tri} < 0$, $D_{tri}$ cannot be an effective linear combination of boundary and hypertree divisors.
\end{proof}

\begin{remark}
We strongly suspect that the curve $F_7$ covers the divisor $2D_{tri}$, and thus $D_{tri}$ spans an extremal ray of $\Eff( \M_{0,7} )$, but at present we have no proof.
\end{remark}

\section{Topics for further study}\label{sec:Future}
\begin{itemize}
\item Extend the classification of minimal balanceable complexes from dimension one (Proposition \ref{Prop:MinimalGraphs}) to higher dimensions (if necessary, restrict to non-singular $d$-complexes with $d \le n-5$).
\item When is the class $D_\Delta$ irreducible if the $d$-complex $\Delta$ is allowed to be singular and/or satisfy $d > n-5$? (cf., Theorem \ref{thm:CoxGen}.)
\item Produce examples of divisor classes whose effectivity requires $\Char(k)=p$ for some $p\ne 2$. (Cf., the example used in Theorem \ref{thm:CharTwo}.)
\item Find a direct bridge between the combinatorics of hypertrees and of a corresponding class of balanceable complexes.  Some hypertree divisors arise from even triangulations of the sphere \cite[\S7]{CT13}; does this relate to our spherical triangulations? (Cf., \S\ref{sec:Deg3}.)
\item Study the relation between a divisor class $D$ and its multiples $mD$ in terms of the associated balanceable complexes.
\item Develop a method for determining when an irreducible divisor class spans an extremal ray of the effective cone, and for proving extremality in such cases. (Cf., \S\ref{subsec:twotri}.)
\end{itemize}


\end{document}